\documentclass{amsart}

\usepackage{latexsym,amsmath,amsfonts,amscd,amssymb, amsthm,url}

\theoremstyle{plain}
\newtheorem{theorem}{Theorem}[section]
\newtheorem{lemma}{Lemma}[section]
\newtheorem{proposition}{Proposition}[section]
\newtheorem{corollary}{Corollary}[section]

\theoremstyle{definition}
\newtheorem{definition}{Definition}[section]
\newtheorem{example}{Example}[section]

\theoremstyle{remark}
\newtheorem{remark}{Remark}[section]

\title{Symmetries in CR complexity theory}

\author{John P. D'Angelo}

\address{Dept. of Mathematics, Univ. of Illinois, 1409 W. Green St.,
Urbana IL 61801, USA}

\email{jpda@illinois.edu}

\author{Ming Xiao}

\address{Dept. of Mathematics, Univ. of Illinois, 1409 W. Green St.,
Urbana IL 61801, USA}

\email{mingxiao@illinois.edu}

\begin{document}

\maketitle

\begin{abstract} We introduce the Hermitian-invariant group $\Gamma_f$ of a proper rational
map $f$ between the unit ball in complex Euclidean space and a generalized ball in 
a space of typically higher dimension. We use properties of the groups to define
the crucial new concepts of essential map and the source rank of a map. 
We prove that every finite subgroup of the source automorphism group is the Hermitian-invariant group 
of some rational proper map between balls. We prove that $\Gamma_f$ is non-compact if and only if $f$ 
is a totally geodesic embedding. We show that $\Gamma_f$ contains an $n$-torus
if and only if $f$ is equivalent to a monomial map. We show that $\Gamma_f$ contains
a maximal compact subgroup if and only if $f$ is equivalent to the juxtaposition
of tensor powers. We also establish a monotonicity result; the group, after intersecting with the unitary group,
does not decrease when a tensor product operation is applied to a polynomial proper map.
We give a necessary condition for $\Gamma_f$ (when the target
is a generalized ball) to contain automorphisms that move the origin.

\medskip

\noindent
{\bf AMS Classification Numbers}: 32H35, 32H02, 32M99,  30J10, 14P10, 32A50.

\medskip

\noindent
{\bf Key Words}: CR complexity; proper holomorphic mappings; automorphism groups; unitary transformations; 
group-invariant CR maps; Hermitian forms.
\end{abstract}

\section{Introduction}

This paper aims to further the development of complexity theory in CR geometry.
Roughly speaking, {\it CR complexity theory}
considers how complicated CR maps between CR manifolds $M$ and $M'$ can be, based on 
geometric information about $M$ and $M'$. A closely related matter considers
the complexity of proper holomorphic mappings between domains with smooth boundaries.

We consider proper rational mappings from the unit ball ${\mathbb B}^n$
in complex Euclidean space ${\mathbb C}^n$ to generalized balls ${\mathbb B}_l^N$
in (typically) higher dimensional spaces. The generalized ball ${\mathbb B}_l^N$ is defined
via a Hermitian form  with $l$ negative eigenvalues. See Definition 2.1.
One measure of complexity of a rational map is its degree. 
We must also consider the source and target dimensions in this discussion.
To do so we introduce and systematically study the notion of Hermitian group invariance.

Given a subgroup $\Gamma$ of ${\rm Aut}({\mathbb B}^n)$, 
we say $f$ is Hermitian $\Gamma$-invariant if, for each $\gamma \in \Gamma$,
there is an automorphism $\psi_\gamma$ of the target (generalized) ball such that 
$  f \circ \gamma =\psi_\gamma \circ f$. 
The {\bf Hermitian invariant group}  $\Gamma_f$ is the maximal such $\Gamma$.
We use this term because we analyze $\Gamma_f$  using Hermitian forms.

We prove in Theorem 3.1 that $\Gamma_f$ is a Lie subgroup of $\mathrm{Aut}(\mathbb{B}^n)$
with finitely many connected components.
In Corollary 3.2 of Theorem 3.2, when the target is a ball,
we show that $\Gamma_f$ is non-compact if and only if
$f$ is a totally geodesic embedding with respect to the 
Poincar\'{e} metric. Otherwise, $\Gamma_f$ is contained in a conjugate of ${\bf U}(n)$.
In Theorem 6.1, when the target is a generalized ball, we give a necessary condition for $\Gamma_f$
to contain an automorphism that moves the origin. 

Theorem 6.2 is perhaps the main result
of this paper. Let $G$ be a finite subgroup of ${\rm Aut}({\mathbb B}^n)$.
Then there is a rational proper map $f$ to some target ball for which $\Gamma_f = G$. Furthermore,
if $G$ is a subgroup of ${\bf U}(n)$, then we may choose $f$ to be a polynomial.
The proof relies on Noether's result that the algebra of polynomials invariant under $G$
is finitely generated. We can prove a related theorem without using finite generation.
If we regard $G$ as a subgroup of the permutation group $S_n$
and represent $S_n$ in ${\bf U}(n)$, then 
we construct in Theorem 6.3 an explicit polynomial proper map $f$ between balls for which $\Gamma_f = G$.

Theorems 5.1 and 5.2 establish the following results: $\Gamma_f$ contains an $n$-torus
if and only if $f$ is equivalent to a monomial map and $\Gamma_f$ contains
a conjugate of the unitary group ${\bf U}(n)$ if and only $f$ is the juxtaposition
of tensor powers. In this paper we assume $f$ is rational.
In a future paper we wll show that Theorems 5.1 and 5.2 remain valid without any regularity assumption.
We also study in detail how the Hermitian group is 
impacted by various constructions of proper maps from
the first author's earlier work. These constructions include the tensor product, a restricted tensor product,
and juxtaposition. See Theorems 4.1, 4.2, Proposition 4.1, and Corollaries 4.1, 4.2.

\begin{example} For $m\ge 2$, the tensor power $z \mapsto z^{\otimes m}$ plays a major role in
this paper. It appropriately generalizes the map $\zeta \mapsto \zeta^m$
in one complex dimension in many important ways. See Definition 1.3 for precise definitions
of the invariant group $G_f$ and the Hermitian invariant group $\Gamma_f$ of a proper map $f$.
For the tensor power, the group $\Gamma_f$  is ${\bf U}(n)$. The group $G_f$ 
is a diagonal unitary representation of a cyclic group of order $m$.
In addition, we use properties of $z^{\otimes m}$
in the proofs of several of our main results. See also Example 7.1.
 \end{example}

The Hermitian invariant group allows us to introduce a new concept, namely the {\it source rank} of a rational map from the unit sphere. We also consider two related notions: {\it image rank} and {\it Hermitian rank}.
These related notions are much easier to understand and have appeared in various guises in many places.

Let $\Gamma \leqslant {\rm Aut}({\mathbb B}^n)$ be a subgroup of the automorphism group of the ball.
We associate with $\Gamma$ a certain integer ${\bf S}(\Gamma)$ as follows.
First we consider subgroups $ G \leqslant \Gamma$ of the form
$$ G = {\bf U}(k_1) \oplus \cdots \oplus {\bf U}(k_J), $$
where $0 \le k_j$ for each $j$ and $\sum k_j \le n$. When $k=0$, we regard ${\bf U}(k)$ as the trivial group.
Define ${\bf S}(G)$ by 
$$ {\bf S}(G)= n - \sum_{k_j \ge 1} (k_j-1). \eqno (1) $$
Note that $1 \le {\bf S}(G) \le n$. 
We define ${\bf S}(\Gamma)$ to be the minimum of ${\bf S}(G)$ for all such $G$. 
For $k_j \ge 1$, the orthogonal summand ${\mathbb C}^{k_j}$ will behave like a one-dimensional space;
we are reducing dimensions from $k_j$ to $1$, hence the definition (1).
The {\bf source rank} of $f$, written ${\bf s}(f)$, will be defined by 
$$ {\bf s}(f) = \min_{\phi} {\bf S}(\phi^{-1} \circ \Gamma_f \circ \phi). \eqno (2) $$
Here the minimum is taken over all $\phi \in {\rm Aut}({\mathbb B}^n)$.

\begin{definition} Let $f:{\mathbb B}^n \to {\mathbb B}^N_l$ be a proper map. Let 
$A_f \leqslant{\rm Aut}({\mathbb B}^n) \times {\rm Aut}({\mathbb B}^N_l)$ be the subgroup consisting
of pairs $(\gamma, \psi)$ for which $f \circ \gamma = \psi \circ f$.  
We let $S$ denote the projection of $A_f$ onto the first factor. 
\end{definition}

In Proposition 3.1, we verify that $A_f$ is a group
and that $S=\Gamma_f$. The next definition introduces the invariant group of $f$
and also provides a direct definition of the Hermitian invariant group.

\begin{definition} Let $f:{\mathbb B}^n \to {\mathbb B}^N_l$ be a proper map.
Let $\Gamma \leqslant {\rm Aut}({\mathbb B}^n)$. 
\begin{itemize}
\item We say $f$ is $\Gamma$-invariant if, for each $\gamma \in \Gamma$,
we have $f\circ \gamma = f$. The maximal such $\Gamma$, written $G_f$, is called
the {\bf invariant group} of $f$.

\item We say $f$ is Hermitian $\Gamma$-invariant if, for each $\gamma \in \Gamma$,
there is an automorphism $\psi_\gamma$ of the target (generalized) ball such that 
$  f \circ \gamma =\psi_\gamma \circ f$. 
The {\bf Hermitian invariant group}  $\Gamma_f$ is the maximal such $\Gamma$.

\item The {\bf source rank} of $f$ is defined by 
$$ {\bf s}(f) = \min_\gamma {\bf S}(\gamma^{-1} \circ \Gamma_f \circ \gamma). \eqno (3) $$
The minimum in (3) is taken over all $\gamma \in {\rm Aut}({\mathbb B}^n)$. The conjugation
by $\gamma$ makes the source rank invariant under composition with automorphisms.

\item The map $f$ is called {\bf source essential} if its source rank equals its source dimension.
It is called {\bf essential} if it is source essential and also its image rank equals its target dimension.
\end{itemize} \end{definition}

\begin{remark} The image rank of $f$ is the smallest $N_0$
for which the image of $f$ lies in an affine space of dimension $N_0$.
One could say (using parallel language) that a map is {\it target essential} if its image rank
equals its target dimension. Since this notion has appeared (with different names) in many papers,
it seems misguided to introduce yet another term. 
We emphasize that the notion of source rank is new. In a future paper,
the authors will study relationships between
these ranks and additional properties of $A_f$.
\end{remark}

A well-known theorem
of Forstneri\v c ([F1]) states, when $n\ge 2$, that a proper holomorphic map $f:{\mathbb B}^n \to {\mathbb B }^N$, assumed (sufficiently) smooth on the sphere,
must be a rational function. In this setting (see [CS]),  the map extends holomorphically past the sphere.

\begin{remark} Definitions 1.1 and 1.2 apply to proper maps without assuming
rationality, but we restrict to the rational case in this paper.
\end{remark}

As noted above, every finite group can be represented as a subgroup of ${\bf U}(n)$ that
is the Hermitian invariant group $\Gamma_f$ of a polynomial proper map $f$ between balls.
By contrast, there are strong restrictions on the invariant group $G_f$ above. For rational proper maps between
unit balls, the group must be cyclic (see [Li]) and the list of possible representations is quite short.  
See [D1] and [D5]. We say a bit more in Section 7.

We now mention the related 
notion of equivariance for holomorphic maps. Assume $f:{\mathbb B}^n \to {\mathbb B}^m$
is holomorphic. Let $\Gamma$ be a subgroup of 
$\mathrm{Aut}(\mathbb{B}^n)$ and let $ \Phi: \Gamma \to \mathrm{Aut}(\mathbb{B}^m)$ be a homomorphism.
Then $f$ is {\it equivariant with respect to}
$\Phi$ if, for each $\gamma \in \Gamma$, we have $f \circ 
\gamma=\Phi(\gamma) \circ f$. (See [KM].) This notion of equivariant map is closely related to a question concerning the rigidity of holomorphic maps between compact 
hyperbolic spaces raised by Siu [S].  For more details and
results, see [CM]. See also [Hu] for a CR geometric version of the question. 

In our setting, $f \circ \gamma = \psi_\gamma \circ f$ for some $\psi_\gamma$, but we do not assume any properties of the relationship between $\gamma$ and $\psi_\gamma$. When the target is also a ball,
formula (12) from Proposition 3.3 provides a computational method 
for deciding whether such a $\psi_\gamma$ exists. This method arises throughout the paper,
as it makes a precise connection to the use of Hermitian forms in CR geometry.
Furthermore, in many situations arising in this paper,
given $\gamma$, there are many possible $\psi_\gamma$ with $f \circ \gamma = \psi_\gamma \circ f$.

This paper includes many examples. In Section 7 we compute the Hermitian invariant group for
all the proper rational maps from ${\mathbb B}^2$ to ${\mathbb B}^3$. In Section 4 we compute the Hermitian
norm ${\mathcal H}(f)$ when $f$ is a tensor product of automorphisms. 
In Section 8 we relate our results to CR complexity and the degree estimate conjecture.

The first author acknowledges support from NSF Grant DMS-1361001. 
The authors also acknowledge AIM. Workshops there in 2006, 2010, and 2014 
all helped in the development of the subject of CR complexity. The
authors thank Peter Ebenfelt, Xiaojun Huang, and Ji\v{r}\'\i\ Lebl
for useful conversations about related matters. The second author also thanks
Sui-Chung Ng and Xin Zhang for helpful discussions. Both authors thank the referee
for suggesting a shorter introduction.

\section{preliminaries}

In this paper, $||z||^2 = \sum_{j=1}^n |z_j|^2$ denotes the squared norm 
in complex Euclidean space of some often unspecified dimension. We write
$$ ||z||_l^2 = \sum_{j=1}^m |z_j|^2 - \sum_{j=m+1}^{m+l} |z_j|^2 \eqno (4) $$ 
for the Hermitian form on ${\mathbb C}^{m+l}$ with $l$ negative eigenvalues.
We write $\langle z,w\rangle$ for the usual Hermitian inner product in ${\mathbb C}^n$ 
and $\langle z,w \rangle_l$ for the sesquilinear form corresponding to formula (4).
Let ${\mathbb P}^N$ denote complex projective space. 

\begin{definition} The generalized unit ball ${\mathbb B}_l^N$ 
is the subset of ${\mathbb P}^N$ consisting of those $\{(z,\zeta)\}$ for which $||z||_l^2 < |\zeta|^2$.
\end{definition}

We make crucial use of the automorphism groups
of these generalized balls. Let ${\bf{U}}(n)$ denote the unitary group on ${\mathbb C}^n$ and let
${\bf U}(m,l)$ denote the linear maps on ${\mathbb C}^{m+l}$ that preserve the form $||z||_l^2$.
As usual, ${\bf SU}(m,l)$ denotes the maps in ${\bf U}(m,l)$ of determinant $1$.
 
The holomorphic automorphism group ${\rm Aut}({\mathbb B}^n)$ of the unit ball 
is the quotient of the real Lie group ${\bf {SU}}(n,1)$ by its center. The center consists of multiples
of the identity operator by $e^{i\theta}$, where $e^{i(n+1)\theta}=1$ to guarantee
that the determinant equals $1$. Each element of ${\rm Aut}({\mathbb B}^n)$ can be 
written $U\phi_a$, where $U \in {\bf U}(n)$ 
and $\phi_a$ is the linear fractional transformation defined by
$$ \phi_a(z) = {a - L_a z \over 1- \langle z,a \rangle}. \eqno (5) $$
Here $a$ is in the unit ball, $s^2 = 1 - ||a||^2$, and 
$$ L_a(z) = {\langle z,a\rangle a \over s+1} + s z. $$ 
The group ${\rm Aut}({\mathbb B}^n)$ is transitive.
Proper maps $f,g$ are {\it spherically equivalent} if
there are automorphisms $\phi$ in the domain and $\chi$ in
the target such that $f = \chi \circ g \circ \phi$.

The story is similar for the generalized balls. 
The holomorphic automorphism group ${\rm Aut}({\mathbb B}_l^ n)$
is the quotient of the real Lie group ${\bf {SU}}(n,l+1)$ by its center.
The automorphism group again consists of linear fractional transformations; it includes
linear maps and maps that move the origin.

Given a rational mapping ${p \over q}$, 
we assume without loss of generality
that the fraction is reduced to lowest terms and $q(0)=1$. We make this normalizing convention
throughout, often without comment.

\begin{definition} Let $f={p \over q}$ be a proper rational map from ${\mathbb B}^n$ to ${\mathbb B}_l^N$ that is normalized as above.
Put $N=m+l$. We define ${\mathcal H}_l(f)$ by:
$$ {\mathcal H}_l(f) = \sum_{j=1}^{m} |p_j|^2 - \sum_{j=m+1}^{m+l} |p_j|^2 - |q|^2. $$
The {\bf Hermitian rank} of $f$ is the rank of the Hermitian form ${\mathcal H}_l(f)$. 
\end{definition}

We can regard ${\mathcal H}_l(f)$ both as a Hermitian form on a vector space of polynomials and
as a real-valued polynomial. Note that ${\mathcal H}_l(f)$ vanishes on the unit sphere; since the ideal
of real polynomials vanishing on the unit sphere is principal, we see that
there is a real polynomial $u(z,{\overline z})$, called the {\it quotient form}, such that
$${\mathcal H}_l(f) = u(z,{\overline z}) \ (||z||^2 - 1). $$  
To be consistent with our other notation, we write ${\mathcal H}(f)$ for ${\mathcal H}_0(f)$.

Assume $f = {p \over q}$ is a rational function 
sending the unit sphere $\mathbb{S}^{2n-1}$ into some $\mathbb{S}^{2N-1}$.
For $k<N$, the unit sphere in ${\mathbb C}^N$ contains many isometric images of 
the unit sphere in ${\mathbb C}^k$. If the image of $f$ happens to lie in one of these spheres,
then we can think of $f$ as a map to $\mathbb{S}^{2k-1}$. To do so, 
consider the Hermitian form given by  
$$ {\mathcal H}(f) = ||p||^2 -|q|^2. \eqno (6) $$
We will use many times the formula
$$ {\mathcal H}(\psi \circ f) = c_\psi {\mathcal H}(f). \eqno (7) $$
In other words, if we compose $f$ with a target automorphism, then the form
${\mathcal H}(f)$ gets multiplied by a positive constant. Hence its signature
is invariant.  This form will have $k$ positive eigenvalues and $1$ negative eigenvalue.
We call $k+1$ the {\bf Hermitian rank} of $f$.  Consider also the 
smallest integer for which the image of the unit ball
in the domain is contained in a $k$-dimensional affine subspace of the target space. 
We call this number the {\bf image rank} or the {\bf embedding dimension} of $f$. For maps to spheres
the Hermitian rank always equals $1$ plus the image rank.

When the image rank of $f$ is $k$, we can regard 
$f$ as mapping $\mathbb{S}^{2n-1}$ to some $\mathbb{S}^{2k-1}$.
Thus the Hermitian form governs {\it target complexity}.
Our discussion of complexity in {\bf both} the source and target considers 
Hermitian forms and corresponding subgroups of the automorphism groups.

Next we motivate the more elusive notion of source rank.
Let $p$ be a polynomial mapping the unit sphere in ${\mathbb C}^n$ to the unit sphere in ${\mathbb C}^N$.
Then $||p(z)||^2 - 1$ is divisible by $||z||^2-1$. To give a simple example, suppose 
$$ ||p(z)||^2 -1 = g(||z||^2) \ (||z||^2 -1) \eqno (8.1) $$
for a polynomial $g$ in one real variable. Then there is a polynomial map
$p^*$ of {\bf one variable} that maps the circle to a sphere; for $\zeta \in {\mathbb C}$, this polynomial satisfies
$$ ||p^*(\zeta)||^2 -1 = g(|\zeta|^2) \ (|\zeta|^2 -1). \eqno (8.2) $$
In this case the {\bf source rank} of $p$ is $1$.
Many higher dimensional maps in fact have source rank $1$; examples include all unitary maps
and tensor products of unitary maps. The key point is that the Hermitian form ${\mathcal H}(p)$
associated with $p$ is invariant under a large group. In (8.1), the group is ${\bf U}(n)$. 

We write $z \oplus w$ to denote 
the orthogonal sum of $z \in {\mathbb C}^k$ and $w \in {\mathbb C}^l$.
Let $G$ and $H$ be groups of transformations on ${\mathbb C}^k$ and ${\mathbb C}^l$.
We write $G \oplus H$ for the group of transformations $g \oplus h$, where
$$ (g \oplus h) (z \oplus w) = g(z) \oplus h(w). $$

Given a subgroup $\Gamma \leqslant {\rm Aut}({\mathbb B}^n)$ 
and an element $\gamma \in {\rm Aut}({\mathbb B}^n)$, as usual 
we call $\gamma^{-1} \circ \Gamma  \circ \gamma$ a {\bf conjugate}
of $\Gamma$.

Given a rational mapping ${p \over q}$, also with target space ${\mathbb C}^{m+l}$,
its associated Hermitian form ${\mathcal H}(f)$ is defined by
$$ ||p||_l^2 - |q|^2 = \sum_{j=1}^m |p_j|^2 - \sum_{j=m+1}^{m+l} |p_j|^2 -|q|^2. $$
We refer to [BEH], [BH], and their references for rigidity results about mappings to generalized balls.

We consider some examples when $l=0$.
Let $f = {p \over q}$ be a rational proper map between balls with $f(0)=0$.
Let $\psi_a$ be an automorphism of the source ball 
with $\psi_a(0)=a$. 
Write $F={P \over Q} = \psi_a \circ f$. Then we have
$$ {\mathcal H}(F) = ||P||^2 - |Q|^2 = (1- ||a||^2) (||p||^2 - |q|^2) = 
 (1- ||a||^2) {\mathcal H}(f). \eqno (9) 
$$
Thus ${\mathcal H}(F)$ is a constant times ${\mathcal H}(f)$. 
See [L] for the following application: in source dimension at least two,
a quadratic rational proper map between balls is spherically 
equivalent to a quadratic monomial map. See also [JZ].

The notion of source rank is elaborated in the next two examples.

\begin{example} Consider any monomial proper map with source dimension $2$. First
write the variables as $(z,w)$. Thus
$$ p(z,w) = (..., c_{ab}z^a w^b, ...). $$
The condition for sending the sphere to a sphere is that $|z|^2 + |w|^2 =1 $ implies
$$ \sum |c_{ab}|^2 |z|^{2a} |w|^{2b} = 1. $$ 
This condition is linear in the positive numbers $|c_{ab}|^2$.
We can create a related map with larger source and target dimensions as follows.
Formally replace $z$ by ${\bf z} = (z_1,..., z_k)$ and $w$ by ${\bf w} = (w_1,...,w_l)$; replace
$|z|^2$ by $||{\bf z}||^2$ and $|w|^2$ by $||{\bf w}||^2$. 
Replace $z^a w^b$ by ${\bf z}^{\otimes a} \otimes {\bf w}^{\otimes b}$.
We obtain a polynomial map $P({\bf z}, {\bf w})$ with source dimension $k+l$. We regard
such maps as {\it inessential}; although $P$ has source dimension $k+l$, its source rank is at most $2$. 
The Hermitian-invariant group $\Gamma_P$ contains ${\bf U}(k) \times {\bf U}(l)$. By
(1), we obtain ${\bf s}(P) \le (k+l)- (k-1) - (l-1) = 2$. 
\end{example}

\begin{example} Suppose that the quotient form $u(z,{\overline z})$ of $f$, defined just after
Definition 2.2, can be written
$$ u(z,{\overline z}) = u(||z'||^2, ||z''||^2, z''', {\overline z'''}), $$
where $z= (z',z'', z''')$ and $z'$ has $n_1$ components, $z''$ has $n_2$ components, and $z'''$
has $n_3$ components. Then ${\bf s}(f) \le 2 + n_3$. Informally we regard 
the $z'$ and $z''$ variables as one-dimensional. 
\end{example}

\section{Proper maps and invariant groups}

Let $f: {\mathbb B}^n \to {\mathbb B}^N_l$ be a proper rational map.
Recall that $A_f \subseteq {\rm Aut}({\mathbb B}^n) \times {\rm Aut}({\mathbb B}^N_l)$ consists
of the pairs $(\gamma, \psi)$ for which 
$f \circ \gamma = \psi \circ f$. Let $S$ be the projection of $A_f$ onto the first factor and $T$
the projection of $A_f$ onto the second factor.

Thus $S$ is the subset consisting
of those source automorphisms $\gamma$ for which there is a target automorphism $\psi_\gamma$ with
$$  f \circ \gamma = \psi_\gamma \circ f. \eqno (10) $$

\begin{proposition} Let $f: {\mathbb B}^n \to {\mathbb B}^N_l$ be a proper rational map.
Let $A_f, S, T$ be as above. Then $A_f$ is a subgroup 
of ${\rm Aut}({\mathbb B}^n) \times {\rm Aut}({\mathbb B}^N_l)$, 
and both $S$ and $T$ are subgroups of the corresponding automorphism groups.
\end{proposition}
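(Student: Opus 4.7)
The plan is to verify the group axioms for $A_f$ directly from the defining relation $f \circ \gamma = \psi \circ f$, and then deduce the statements about $S$ and $T$ by observing that the two projections are group homomorphisms on the ambient product group.

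First I would check that $A_f$ contains the identity: taking $\gamma = \mathrm{id}_{\mathbb{B}^n}$ and $\psi = \mathrm{id}_{\mathbb{B}^N_l}$ trivially gives $f \circ \gamma = \psi \circ f$, so $(\mathrm{id},\mathrm{id}) \in A_f$. For closure under multiplication in the product group, suppose $(\gamma_1,\psi_1),(\gamma_2,\psi_2) \in A_f$. Then
$$ f \circ (\gamma_1 \circ \gamma_2) = (f\circ \gamma_1) \circ \gamma_2 = \psi_1 \circ f \circ \gamma_2 = \psi_1 \circ \psi_2 \circ f = (\psi_1 \circ \psi_2) \circ f, $$
so $(\gamma_1\circ\gamma_2, \psi_1\circ\psi_2)\in A_f$. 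For inverses, given $(\gamma,\psi)\in A_f$, compose both sides of $f \circ \gamma = \psi \circ f$ with $\gamma^{-1}$ on the right and $\psi^{-1}$ on the left to obtain $\psi^{-1} \circ f = f \circ \gamma^{-1}$, so $(\gamma^{-1},\psi^{-1})\in A_f$. This shows $A_f$ is a subgroup of $\mathrm{Aut}(\mathbb{B}^n) \times \mathrm{Aut}(\mathbb{B}^N_l)$.

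For the statements about $S$ and $T$, I would simply observe that the two coordinate projections
$$\pi_1 : \mathrm{Aut}(\mathbb{B}^n) \times \mathrm{Aut}(\mathbb{B}^N_l) \to \mathrm{Aut}(\mathbb{B}^n), \qquad \pi_2 : \mathrm{Aut}(\mathbb{B}^n) \times \mathrm{Aut}(\mathbb{B}^N_l) \to \mathrm{Aut}(\mathbb{B}^N_l)$$
are group homomorphisms, and that $S = \pi_1(A_f)$ and $T = \pi_2(A_f)$. Since the image of a subgroup under a group homomorphism is a subgroup of the codomain, $S$ and $T$ are subgroups of the respective automorphism groups.

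There is essentially no obstacle here: the only point requiring a moment of care is that inverting the relation $f \circ \gamma = \psi \circ f$ yields precisely $f \circ \gamma^{-1} = \psi^{-1} \circ f$ without any ambiguity, which uses only that $\gamma$ and $\psi$ are honest bijections (guaranteed since they lie in the respective automorphism groups). No use of rationality of $f$, or of any structure on the target beyond its automorphism group, is needed for this proposition.
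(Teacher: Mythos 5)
Your argument is correct and is essentially identical to the paper's: both verify closure under products and inverses directly from the relation $f \circ \gamma = \psi \circ f$, and both then invoke that the coordinate projections are homomorphisms so that $S$ and $T$ are subgroups as images of $A_f$. The only cosmetic difference is that you also spell out the identity check, which the paper leaves implicit.
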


\begin{proof} First we check that $A_f$ is a subgroup. If
$(\gamma, \psi) \in A_f$, then $(\gamma^{-1}, \psi^{-1}) \in A_f$, because
$f \circ \gamma = \psi \circ f$ implies
$$ \psi^{-1} \circ f = f \circ \gamma^{-1}. \eqno (11.1) $$
Furthermore, if $(\gamma_1, \psi_1)$ and $(\gamma_2, \psi_2)$ are in $A_f$, then
$(\gamma_1 \circ \gamma_2, \psi_1 \circ \psi_2) \in A_f$ because 
$$ \psi_1 \circ \psi _2 \circ f =
\psi_1 \circ f \circ \gamma_2 = f \circ \gamma_1 \circ \gamma_2. \eqno (11.2) $$
Hence $A_f \leqslant {\rm Aut}({\mathbb B}^n) \times {\rm Aut}({\mathbb B}^N_l)$.
Both projections are homomorphisms; thus each of $S$ and $T$ is a subgroup of its corresponding automorphism group.
 \end{proof}

Note that $S=\Gamma_f$, because $\Gamma_f$ is the maximal Hermitian invariant subgroup.
We do not make any particular use of the target group $T$ 
from this proposition, but we note in passing that the image
of the ball is preserved under maps in $T$. In the remainder
of the paper we will regard $\Gamma_f$ as defined by (10).

The next result follows immediately from the definitions. 
The subsequent result, Proposition 3.3, requires that the target be a 
ball.

\begin{proposition} 
Let $f,g$ be rational proper maps from ${\mathbb B}^n$  to ${\mathbb B}^N_l$. Assume that
there are automorphisms for which
$$\psi \circ f=g\circ \varphi.$$
Let $\Gamma_f, \Gamma_g$ be the Hermitian invariant groups of $f$ and $g$.
Then $\Gamma_f$ and $\Gamma_g$ are conjugate by $\varphi$:
$$\Gamma_f =\varphi^{-1} \circ \Gamma_g \circ \varphi.$$
\end{proposition}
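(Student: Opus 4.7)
The plan is to verify the equality of groups by showing the direct inclusion $\varphi \circ \Gamma_f \circ \varphi^{-1} \subseteq \Gamma_g$ and then running the exact same argument in reverse to obtain the opposite inclusion. Since the statement is purely formal (every map in sight is invertible, and the relation $f \circ \gamma = \psi_\gamma \circ f$ characterizes $\Gamma_f$ by Definition 1.3 and Proposition 3.1), I do not expect any genuine obstacle.

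For the forward inclusion, I would start from an arbitrary $\gamma \in \Gamma_f$, so that there exists an automorphism $\psi_\gamma$ of $\mathbb{B}^N_l$ with $f \circ \gamma = \psi_\gamma \circ f$. Setting $\tilde{\gamma} := \varphi \circ \gamma \circ \varphi^{-1}$, I want to exhibit a target automorphism that intertwines $g$ and $\tilde{\gamma}$. Using the hypothesis $\psi \circ f = g \circ \varphi$ in the form $f \circ \varphi^{-1} = \psi^{-1} \circ g$ and $g \circ \varphi = \psi \circ f$, I compute
\[
g \circ \tilde{\gamma} = g \circ \varphi \circ \gamma \circ \varphi^{-1} = \psi \circ f \circ \gamma \circ \varphi^{-1} = \psi \circ \psi_\gamma \circ f \circ \varphi^{-1} = (\psi \circ \psi_\gamma \circ \psi^{-1}) \circ g.
\]
Since $\psi \circ \psi_\gamma \circ \psi^{-1}$ is an automorphism of $\mathbb{B}^N_l$, this exhibits $\tilde{\gamma}$ as an element of $\Gamma_g$, proving $\varphi \circ \Gamma_f \circ \varphi^{-1} \subseteq \Gamma_g$, equivalently $\Gamma_f \subseteq \varphi^{-1} \circ \Gamma_g \circ \varphi$.

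For the reverse inclusion, the hypothesis is symmetric: $\psi \circ f = g \circ \varphi$ is equivalent to $\psi^{-1} \circ g = f \circ \varphi^{-1}$, so the roles of $(f, \psi, \varphi)$ and $(g, \psi^{-1}, \varphi^{-1})$ are interchangeable. Applying the identical computation with these swapped roles yields $\varphi^{-1} \circ \Gamma_g \circ \varphi \subseteq \Gamma_f$, giving the matching inclusion. Combining the two establishes $\Gamma_f = \varphi^{-1} \circ \Gamma_g \circ \varphi$. The only point that could conceivably require care is verifying that $\psi \circ \psi_\gamma \circ \psi^{-1}$ genuinely lies in $\mathrm{Aut}(\mathbb{B}^N_l)$, but this is immediate since $\mathrm{Aut}(\mathbb{B}^N_l)$ is a group and conjugation stays inside it.
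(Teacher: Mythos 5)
Your proof is correct. The paper gives no explicit argument for this proposition, remarking only that it ``follows immediately from the definitions,'' and your two-inclusion computation with $\tilde{\gamma} = \varphi \circ \gamma \circ \varphi^{-1}$ and the conjugated target automorphism $\psi \circ \psi_\gamma \circ \psi^{-1}$ is precisely the straightforward verification the paper has in mind.
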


\begin{proposition} Let $f={p \over q}$ be a proper rational map from ${\mathbb B}^n$ to some
${\mathbb B}^N$. Let $\Gamma_f \leqslant {\rm Aut}({\mathbb B}^n)$ be the Hermitian invariant group of $f$.  
Then $\gamma \in \Gamma_f$ if and only if 
there is a constant $c_\gamma$ such that 
$$ {\mathcal H} (f \circ \gamma) = c_\gamma {\mathcal H}(f). \eqno (12) $$
\end{proposition}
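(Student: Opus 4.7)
The plan has two directions. The forward direction is immediate from equation (7): if $\gamma \in \Gamma_f$, then by definition there is $\psi_\gamma \in {\rm Aut}({\mathbb B}^N)$ with $f \circ \gamma = \psi_\gamma \circ f$, whence ${\mathcal H}(f \circ \gamma) = {\mathcal H}(\psi_\gamma \circ f) = c_{\psi_\gamma}{\mathcal H}(f)$, and we take $c_\gamma := c_{\psi_\gamma}$.

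For the converse, my strategy is to build $\psi$ at the level of homogeneous coordinates on ${\mathbb C}^{N+1}$, exploiting that ${\rm Aut}({\mathbb B}^N)$ is the projectivization of the group preserving the Hermitian form $J := I_N \oplus (-1)$. Writing $f = p/q$ and $f \circ \gamma = \tilde p / \tilde q$ in reduced normalized form, I would first compose with a target unitary to reduce to the case where the image rank of $f$ equals $N$; this preserves ${\mathcal H}(f)$ up to a positive scalar by (7) and hence the hypothesis. In this reduced setting the homogeneous lift $v(z) := (p_1(z), \ldots, p_N(z), q(z))$ has linear span equal to ${\mathbb C}^{N+1}$, and since $\gamma$ is a source automorphism the same holds for $\tilde v(z) := (\tilde p_1(z), \ldots, \tilde p_N(z), \tilde q(z))$. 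Evaluating the hypothesis at the origin forces $c_\gamma > 0$.

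Next, I would polarize the hypothesis $\|\tilde p\|^2 - |\tilde q|^2 = c_\gamma(\|p\|^2 - |q|^2)$, viewed as a Hermitian polynomial in $(z, \bar z)$, to obtain the bi-holomorphic identity $\tilde v(w)^* J \tilde v(z) = c_\gamma \, v(w)^* J v(z)$ for all $z, w \in {\mathbb C}^n$. Define a linear map $A: {\mathbb C}^{N+1} \to {\mathbb C}^{N+1}$ by the rule $A v(z) = \tilde v(z)$. Any relation $\sum_i \alpha_i v(z_i) = 0$ forces, via the polarized identity with $w$ arbitrary, the vector $\sum_i \alpha_i \tilde v(z_i)$ to be $J$-orthogonal to the full span ${\mathbb C}^{N+1}$ of the $\tilde v(w)$'s, hence zero by non-degeneracy of $J$; so $A$ is well-defined, and invertible by symmetry. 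The same polarized identity, evaluated on spanning families, gives $A^* J A = c_\gamma J$, so $B := c_\gamma^{-1/2} A$ lies in ${\bf U}(N,1)$ and projects to the sought-after $\psi \in {\rm Aut}({\mathbb B}^N)$; reading off $B v(z) = c_\gamma^{-1/2} \tilde v(z)$ in projective coordinates yields $\psi \circ f = f \circ \gamma$.

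I expect the main technical obstacle to be the image-rank reduction at the outset. One must check that discarding redundant target coordinates preserves both ${\mathcal H}$ and the invariance relation being established (it does, since stripping trailing zero components leaves $\|p\|^2 - |q|^2$ unchanged and $\gamma$ acts only on the source), and that the resulting map is still a proper rational map of the stated form. Without this reduction the span of the $v(z)$'s could be a proper subspace of ${\mathbb C}^{N+1}$ on which $J$ might degenerate, in which case $A$ would only be defined on that subspace and would require a form-preserving extension to all of ${\mathbb C}^{N+1}$ — available via Witt's theorem, but considerably less transparent than in the essential case.
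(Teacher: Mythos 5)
Your proposal is correct, and it follows a genuinely different route from the paper. The paper's converse direction normalizes both $f=p/q$ and $f\circ\gamma=P/Q$ separately by composing each with a target automorphism so that $p(0)=P(0)=0$, $q(0)=Q(0)=1$; evaluating (12) at the origin then pins the constant to $1$, and a direct comparison of pure and mixed terms in $\|P\|^2-|Q|^2=\|p\|^2-|q|^2$ forces $Q=q$ and $\|P\|^2=\|p\|^2$, hence $P=Up$ for a unitary $U$, yielding $\psi$ directly. Your approach instead works at the level of the Hermitian form $J$ on $\mathbb{C}^{N+1}$: you polarize the scalar identity to a biholomorphic one $\tilde v(w)^*J\tilde v(z)=c_\gamma\,v(w)^*Jv(z)$, build the intertwining operator $A$ from the well-definedness argument with nondegeneracy of $J$, and obtain $A^*JA=c_\gamma J$, after which projectivizing $c_\gamma^{-1/2}A$ gives $\psi$. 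What the paper's argument buys is concreteness and elementariness -- no polarization, no operator construction, no image-rank reduction; the trade-off is that the paper quietly uses two \emph{different} target automorphisms to normalize $f$ and $f\circ\gamma$, and one must undo them at the end. What your argument buys is a cleaner conceptual picture: it exhibits $\psi$ as (the projectivization of) an element of ${\bf U}(N,1)$ characterized by the form identity, which makes the structural reason for the result transparent and generalizes more readily to the $\mathbb{B}^N_l$ setting the paper later considers in Theorem 6.1. Your image-rank reduction is the one genuinely extra step, and your comment that Witt's theorem would handle the degenerate span is correct; alternatively, one could simply intersect with the orthogonal complement of the radical of $J$ restricted to the span. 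One small point you should make explicit if you write this up in full: after reducing to a smaller target $\mathbb{B}^{N_0}$ and producing $\psi_0\in\mathrm{Aut}(\mathbb{B}^{N_0})$, you lift back to $\mathrm{Aut}(\mathbb{B}^N)$ by the block embedding ${\bf SU}(N_0,1)\times{\bf U}(N-N_0)\hookrightarrow{\bf U}(N,1)$; this step is routine but is part of the argument.
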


\begin{proof} First assume that (12) holds. Write $f = {p \over q}$ and $f \circ \gamma = {P \over Q}$.
By convention we assume $q(0)=Q(0)=1$ and that the fractions are in lowest terms.
After composing with automorphisms of the target we may also assume $p(0)=0$ and $P(0)=0$.
Our assumption (12) yields
$$ ||P||^2 - |Q|^2 = C_\gamma ( ||p||^2 - |q|^2), \eqno (13) $$
and thus the constant $C_\gamma$ must equal $1$. Write $Q=1+A$ and $q=1+a$ and plug in (13).
Equating pure terms yields
$$ 2 {\rm Re} (A) = 2 {\rm Re}(a).$$
Since $A,a$ are polynomials vanishing at $0$ we obtain $A=a$. Equating mixed terms then gives
$$ ||P||^2 - ||A||^2 = ||p||^2 - |a|^2, $$
and hence $||P||^2 = ||p||^2$. Therefore $P=Up$ for some unitary $U \in {\bf U}(N)$.
It follows that $f \circ \gamma = g_\gamma \circ f$ for some automorphism $g_\gamma$, and Definition 1.3 holds.

The converse is easy: we are given Hermitian invariance and we need to prove (12). 
If $\varphi$ is an automorphism of the target ball,
then $\varphi = U \circ \phi_a$ where $\phi_a$ satisfies (5). We may assume $f(0)=0$. Hence, by (9),
$$ {\mathcal H}(\varphi \circ f) = (1- ||a||^2) {\mathcal H}(f). \eqno (14) $$
The equality $ f \circ \gamma =\psi_\gamma \circ f$ and 
(14) guarantee that 
$$ {\mathcal H}(f \circ \gamma ) = {\mathcal H}( \psi_\gamma \circ f) = 
c_\gamma {\mathcal H}(f) $$
for a non-zero constant $c_\gamma$. Hence (12) holds. 
\end{proof}

\begin{remark} When $\gamma \in {\bf U}(n)$, the 
constant $c_\gamma$
from (12) necessarily equals $1$.
\end{remark}

\begin{corollary} For any proper rational map $f$ between balls, $\Gamma_f = \Gamma_{f\oplus 0}$. \end{corollary}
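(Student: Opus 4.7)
The plan is to reduce everything to the Hermitian-form criterion proved in Proposition 3.3, which bypasses the need to compare target automorphism groups of different dimensions. Writing $f = p/q$ in lowest terms with $q(0)=1$, the augmented map $f \oplus 0$ has representation $(p_1,\dots,p_N,0,\dots,0)/q$; this is still in lowest terms with the same denominator, so it satisfies the normalization of Definition 2.2. Its Hermitian form is
\[
\mathcal{H}(f \oplus 0) = \sum_{j=1}^{N} |p_j|^2 + \sum (\text{zeros}) - |q|^2 = \|p\|^2 - |q|^2 = \mathcal{H}(f).
\]

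Next, for any source automorphism $\gamma \in \mathrm{Aut}(\mathbb{B}^n)$ I would observe that $(f \oplus 0) \circ \gamma = (f \circ \gamma) \oplus 0$, so the same identity yields $\mathcal{H}\bigl((f \oplus 0)\circ \gamma\bigr) = \mathcal{H}(f\circ \gamma)$. By Proposition 3.3, $\gamma \in \Gamma_f$ if and only if there is a constant $c_\gamma$ with $\mathcal{H}(f\circ\gamma) = c_\gamma \mathcal{H}(f)$, and the analogous criterion holds for $f \oplus 0$. Since the two scalar equations are literally identical, both sides of the membership criterion hold for the same $\gamma$ and the same constant, giving $\Gamma_f = \Gamma_{f \oplus 0}$.

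There is no serious obstacle here; the only point requiring care is that one must invoke Proposition 3.3 rather than the original definition of $\Gamma_f$. If one tried to compare the automorphisms $\psi_\gamma \in \mathrm{Aut}(\mathbb{B}^N)$ with their counterparts $\widetilde{\psi}_\gamma \in \mathrm{Aut}(\mathbb{B}^{N+k})$ directly, one would have to argue that a witnessing $\widetilde{\psi}_\gamma$ for $f \oplus 0$ can always be built from, and reduced to, one for $f$; the Hermitian-form criterion makes this unnecessary because it depends only on the numerator norm squared and the denominator, both of which are unaffected by appending zeros.
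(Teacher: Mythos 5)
Your proof is correct and is exactly the argument the paper has in mind: the corollary is stated with no separate proof immediately after Proposition 3.3, precisely because the Hermitian-form criterion $\mathcal{H}(f\circ\gamma) = c_\gamma\,\mathcal{H}(f)$ depends only on $\|p\|^2 - |q|^2$, which is unchanged by appending zero components, and $(f\oplus 0)\circ\gamma = (f\circ\gamma)\oplus 0$ does the rest. Your closing observation that the criterion sidesteps the need to lift $\psi_\gamma\in\mathrm{Aut}(\mathbb{B}^N)$ to $\mathrm{Aut}(\mathbb{B}^{N+k})$ is the right thing to notice.
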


We give a simple example computing ${\mathcal H}(f)$ and illustrating source rank.

\begin{example} Put $f(z) = ( z_1,z_2, z_1z_3, z_2z_3, z_3^2)$.
Then $f:{\mathbb B}^3 \to {\mathbb B}^5$ is proper and
$$ {\mathcal H}(f) = |z_1|^2 + |z_2|^2 + (|z_1|^2 + |z_2|^2 + |z_3|^2) |z_3|^2 - 1. $$
This Hermitian form is invariant under the subgroup ${\bf U}(2) \oplus {\bf U}(1)$ of ${\bf U}(3)$
and ${\bf s}(f) =2$. In this case $f$ itself 
is invariant only under the trivial group. \end{example}

\begin{theorem}
Let $f$ be a rational proper map from $\mathbb{B}^n$ to $\mathbb{B}^N.$ 
Then $\Gamma_f$ is a Lie subgroup of $\mathrm{Aut}(\mathbb{B}^n)$
with finitely many connected components.
\end{theorem}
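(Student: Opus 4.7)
The plan is to exhibit $\Gamma_f$ as a real algebraic subgroup of $\mathrm{Aut}(\mathbb{B}^n)$ and then invoke two standard facts: Cartan's closed subgroup theorem, which supplies the Lie structure, and the theorem (due to Whitney) that any real algebraic variety has only finitely many connected components.

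First I would lift to ${\bf SU}(n,1)$. Each $\widetilde\gamma \in {\bf SU}(n,1)$, written as a block matrix $\widetilde\gamma = \begin{pmatrix} A & b \\ c^{\,T} & d \end{pmatrix}$, induces the linear fractional map $\gamma(z) = (Az+b)/(c\cdot z + d)$. With $k = \deg f$ and $f = p/q$, set
$$ \widetilde p(z) = (c\cdot z + d)^{k}\, p(\gamma(z)), \qquad \widetilde q(z) = (c\cdot z + d)^{k}\, q(\gamma(z)). $$
These are polynomials of degree at most $k$ in $z$ whose coefficients are polynomials in the entries of $\widetilde\gamma$, and a direct calculation gives $\|\widetilde p\|^2 - |\widetilde q|^2 = |c\cdot z + d|^{2k}\bigl(\|p(\gamma(z))\|^2 - |q(\gamma(z))|^2\bigr)$. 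A short check—using that $(p,q)$ is in lowest terms and that the automorphism $\gamma$ pulls back codimension-one subvarieties to codimension-one subvarieties—shows that $(\widetilde p,\widetilde q)$ has no common polynomial factor. Hence Proposition 3.3 translates into the statement that $\gamma \in \Gamma_f$ if and only if there is a real scalar $\lambda$ with
$$ \|\widetilde p\|^2 - |\widetilde q|^2 \;=\; \lambda\, \mathcal H(f) $$
as polynomials in $(z,\bar z)$.

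Next I would observe that both sides of this identity live in the finite-dimensional real vector space $V$ of real polynomials in $(z,\bar z)$ of bidegree at most $(k,k)$. In any fixed basis of $V$, the coordinates of the left-hand side are polynomials in the real and imaginary parts of the entries of $\widetilde\gamma$; on the right they are $\lambda$ times the fixed coordinates of $\mathcal H(f)$. Proportionality of these two vectors of $V$—equivalently, the vanishing of all $2\times 2$ minors of the associated $2\times \dim V$ coefficient matrix—is therefore a system of real polynomial equations on $\widetilde\gamma$. Together with the defining equations of ${\bf SU}(n,1)$, these carve out the preimage $\widetilde\Gamma_f$ of $\Gamma_f$ as a real algebraic subgroup of ${\bf SU}(n,1)$.

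Finally, being real algebraic, $\widetilde\Gamma_f$ is topologically closed, so Cartan's closed subgroup theorem endows it with the structure of a Lie subgroup; and being a real algebraic variety, it has only finitely many connected components. Both properties descend through the finite central quotient ${\bf SU}(n,1) \to \mathrm{Aut}(\mathbb{B}^n)$ to give the corresponding statements for $\Gamma_f$. I expect the main obstacle to be the first step—setting up $\widetilde p$ and $\widetilde q$ with coefficients manifestly polynomial in $\widetilde\gamma$, verifying the no-common-factor claim, and checking that Proposition 3.3 (phrased originally in terms of the normalized rational form of $f\circ\gamma$) really does reduce to the clean linear-dependence condition above. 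Once this translation is secured, the algebraic-group machinery delivers both assertions at once.
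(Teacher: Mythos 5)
Your argument is correct and follows essentially the same route as the paper: pass to homogeneous coordinates via ${\bf SU}(n,1)$ so the coefficients of $\|\widetilde p\|^2 - |\widetilde q|^2$ depend polynomially on the matrix entries, translate Proposition 3.3 into a proportionality condition with $\mathcal{H}(f)$, and conclude that $\Gamma_f$ is a real algebraic subvariety, hence closed with finitely many components, and a Lie subgroup by Cartan's theorem. The only cosmetic difference is how the scalar is eliminated: the paper determines it explicitly by evaluating at the origin, obtaining a polynomial $\lambda(U)$ in the entries, whereas you encode proportionality via vanishing of $2\times 2$ minors—both produce the requisite polynomial system.
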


\begin{proof}
By composing $f$ with an automorphism of $\mathbb{B}^N$ if necessary, we 
may assume $f(0)=0$.
Regard $\mathbb{B}^n$ and $\mathbb{B}^N$ as open subsets of $\mathbb{P}^n$ and $\mathbb{P}^N.$ Then $\Gamma_f$ can be regarded as a subgroup of ${\bf SU}(n, 1)/Z$; here $Z$ is the center of ${\bf SU}(n, 1)$.
Write $[z, s]$ for homogeneous coordinates of $\mathbb{P}^n$. In homogeneous coordinates, write $f$ as
$$\hat{f}=[p_1(z, s), \cdots, p_N(z, s), q(z, s)].$$
Here $p_1, \cdots, p_N, q$ are homogeneous polynomials in $z, s$ with no 
nonconstant common factor. 
We can assume $p_i(0, 1)=0$ for each $1 \leq i \leq N$ and $q(0,1)=1.$ 

Let $\gamma \in \mathrm{Aut}(\mathbb{B}^n)$. Write
$\gamma=U=(u_{ij})_{1 \leq i, j \leq n+1} \in {\bf SU}(n, 1)$.
Note  that
$$f \circ \gamma= [p_1((z, s)U), \cdots, p_N((z, s)U), q((z, s)U)].$$
For $1 \leq i \leq N$ write $p_i((0,1)U)=\lambda_i(U)$ and $q((0,1)U) =\mu(U)$.
Define $\lambda(U)$ by
$$\lambda(U)=\sum_{i=1}^N |\lambda_i(U)|^2- |\mu(U)|^2 <0. $$ 
Then $\lambda(U)$  depends polynomially on the entries $u_{ij}$.

Note that $\gamma \in \Gamma_f$ if and only if 
$$||p((z,s)U)||^2-|q((z, s)U)|^2=- \lambda(U) \left( ||p(z, s)||^2- |q(z, s)|^2\right). $$
For every choice of multi-indices $\alpha, \beta,$ and nonnegative integers $\mu, \nu$, we equate terms of the form $z^{\alpha}s^{\mu}\overline{z}^{\beta}\overline{s}^{\nu}$ in this equation. We obtain a system of real-valued polynomial
equations in the $u_{ij}$ whose
solution set is precisely $\Gamma_f$. 
Thus $\Gamma_f$ is a real-algebraic subvariety of ${\bf SU}(n, 1)/Z$.
Hence it is a closed subset of ${\rm Aut}({\mathbb B}^n)$ 
and has finitely many connected components. Since $\Gamma_f$ is closed and is a subgroup of
${\rm Aut}({\mathbb B}^n)$ by Proposition 3.1, it is a Lie subgroup.
\end{proof}

The following result often enables us to work only with unitary automorphisms
when studying Hermitian invariance for polynomial proper maps. In addition, as a corollary,
the Hermitian invariant group of a rational proper map between balls 
can be non-compact only when the map is totally geodesic. 

\begin{theorem} Let $f={p \over q}: {\mathbb B}^n \to {\mathbb B}^N$ be a rational proper map of degree $d$
with $p(0)=0$ and $q(0)=1$.
Suppose $\Gamma_f$ contains an automorphism $\gamma= U\phi_a$ that moves the origin. Then
$$ (||p(a)||^2 - |q(a)|^2 ) \ (||p(Ua)||^2 - |q(Ua)|^2) = (1 - ||a||^2)^{2d}.  \eqno (15) $$
Furthermore, if $f=p$ is a polynomial, then 
$p$ is unitarily equivalent to $z \oplus 0$. 
\end{theorem}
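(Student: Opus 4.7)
My plan has two parts: derive (15) by a homogeneous-coordinate calculation, then force $d = 1$ in the polynomial case via a Cauchy--Schwarz obstruction on the sphere.

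For (15), I would pass to homogeneous coordinates: write $\hat f = [\hat p : \hat q]$ with $\hat p, \hat q$ homogeneous of degree $d$ on $\mathbb{C}^{n+1}$, and lift $\gamma$, $\psi_\gamma$ to matrices in ${\bf SU}(n,1)$ and ${\bf SU}(N,1)$. The relation $f \circ \gamma = \psi_\gamma \circ f$ yields $\hat f \circ \hat\gamma = \mu\,\hat\psi_\gamma \circ \hat f$ for some scalar $\mu$; since $\hat\psi_\gamma$ preserves the signature-$(N,1)$ form,
$$\|\hat p(\hat\gamma(w))\|^2 - |\hat q(\hat\gamma(w))|^2 = |\mu|^2 \bigl(\|\hat p(w)\|^2 - |\hat q(w)|^2\bigr).$$
The canonical lift of $\gamma = U\phi_a$ sends $(0,1)$ to $(cUa, c)$ with $|c|^2 = (1-\|a\|^2)^{-1}$; evaluating at $w = (0,1)$ and using homogeneity of degree $d$ gives $\|p(Ua)\|^2 - |q(Ua)|^2 = -|\mu|^2(1-\|a\|^2)^d$. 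The same computation for $\gamma^{-1}$, which satisfies $\gamma^{-1}(0)=a$, yields $\|p(a)\|^2 - |q(a)|^2 = -|\mu'|^2(1-\|a\|^2)^d$. The composition identity $\gamma\circ\gamma^{-1} = \mathrm{id}$ forces $|\mu\mu'| = 1$, so multiplying the two formulas yields (15).

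For the polynomial case ($q = 1$), I would combine the same homogeneous-coordinate identity at general $w = (z,1)$ with the target Möbius identity $1 - \|\psi_\gamma(w)\|^2 = (1-\|b\|^2)(1-\|w\|^2)/|1-\langle w, b\rangle|^2$, where $b = \psi_\gamma(0) = p(Ua)$. Cancelling the common factor $\|p(z)\|^2 - 1$ and using $1 - \|b\|^2 = |\mu|^2(1-\|a\|^2)^d$ (so that $|\mu|^2$ drops out) produces
$$|1 - \langle p(z), b\rangle|^2 = |1 - \langle z, a\rangle|^{2d}.$$
Unique factorization in $\mathbb{C}[z]\otimes\mathbb{C}[\bar z]$, normalized by the value $1$ at $z = 0$, upgrades this to the rigid holomorphic identity
$$1 - \langle p(z), b\rangle = (1 - \langle z, a\rangle)^d. \quad (\star)$$
The analog of $(\star)$ for $\gamma^{-1}$ (with parameter $Ua$ and target image $p(a)$), combined with $(\star)$ and $|\mu\mu'| = 1$, forces $|1 - \langle Ua, a\rangle| = 1 - \|a\|^2$; a Cauchy--Schwarz argument using $|\langle Ua, a\rangle| \le \|a\|^2$ then pins down $Ua = a$, so $b = p(a)$.

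Finally, evaluating $(\star)$ at $z = a$ gives $\|p(a)\|^2 = 1 - (1-\|a\|^2)^d$, and Cauchy--Schwarz yields $|\langle p(z), p(a)\rangle|^2 \le \|p(z)\|^2 \|p(a)\|^2$; combined with $\|p(z)\|^2 = 1$ on the sphere and $(\star)$, this becomes
$$|1 - (1-\langle z, a\rangle)^d|^2 \le 1 - (1-\|a\|^2)^d \qquad \text{for } \|z\| = 1.$$
Evaluating at the unit vector $z = a/\|a\|$ (so $\langle z, a\rangle = \|a\|$) reduces this algebraically to $(1-\|a\|)^d + (1+\|a\|)^d \le 2$, which fails for every $d \ge 2$ because $(1-\alpha)^d + (1+\alpha)^d = 2\sum_{k \text{ even}} \binom{d}{k}\alpha^k > 2$ for $0 < \alpha < 1$. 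Hence $d = 1$, so $p$ is linear with $\|p(z)\|^2 = \|z\|^2$; $p$ is therefore a partial isometry, unitarily equivalent to $z \oplus 0$. The main technical hurdle is the careful bookkeeping of the scalar $\mu$ and the passage from the modulus identity to $(\star)$ by holomorphic factorization.
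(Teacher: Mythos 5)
Your proposal reaches both conclusions, but by a genuinely different route from the paper's, and the second half is far more elaborate than it needs to be. For (15), you work in homogeneous coordinates, lift $\gamma$ and $\psi_\gamma$ to ${\bf SU}(n,1)$ and ${\bf SU}(N,1)$, and exploit the identity $\|\hat p\circ\hat\gamma\|^2 - |\hat q\circ\hat\gamma|^2 = |\mu|^2(\|\hat p\|^2 - |\hat q|^2)$ evaluated at $(0,1)$ for both $\gamma$ and $\gamma^{-1}$. The paper instead invokes Proposition 3.3 to get ${\mathcal H}(f\circ\gamma)=c_\gamma {\mathcal H}(f)$, plugs the explicit formula (5) for $\phi_a$ into ${\mathcal H}(f\circ\gamma)$, and evaluates at $z=0$ and $z=a$. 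These are equivalent in content; yours avoids the explicit $L_a$ algebra at the cost of some ${\bf SU}(n,1)$ bookkeeping, and your observation $|\mu\mu'|=1$ is most cleanly justified by substituting $\hat\gamma^{-1}(w)$ for $w$ in the first identity rather than by appealing loosely to $\gamma\circ\gamma^{-1}=\mathrm{id}$.

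For the polynomial case you deploy a much heavier machine than the paper does. Two remarks. First, there is a genuine slip in the target M\"obius identity: for a general automorphism $\psi$ of the ball one has
$1-\|\psi(w)\|^2=\dfrac{(1-\|\psi(0)\|^2)(1-\|w\|^2)}{|1-\langle w,\,\psi^{-1}(0)\rangle|^2}$,
so the parameter in the denominator is $\psi_\gamma^{-1}(0)$, not $b=\psi_\gamma(0)=p(Ua)$ as you wrote. Since $\psi_\gamma(p(a))=p(\gamma(a))=p(0)=0$, the correct parameter is $p(a)$, and $(\star)$ should read $1-\langle p(z),p(a)\rangle=(1-\langle z,a\rangle)^d$. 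With this fixed, your detour establishing $Ua=a$ is unnecessary: evaluating the corrected $(\star)$ at $z=a$ immediately gives $\|p(a)\|^2=1-(1-\|a\|^2)^d$. Second, even your Cauchy--Schwarz finish at $z=a/\|a\|$ is more than is needed; once you have $\|p(a)\|^2=1-(1-\|a\|^2)^d$, the Schwarz lemma $\|p(a)\|\le\|a\|$ already gives $(1-\|a\|^2)^d\ge 1-\|a\|^2$, forcing $d\le 1$. The paper bypasses $(\star)$ entirely: it applies Schwarz at $a$ and $Ua$ to get $(1-\|p(a)\|^2)(1-\|p(Ua)\|^2)\ge(1-\|a\|^2)^2$, substitutes into (15), and concludes $(1-\|a\|^2)^{2d}\ge(1-\|a\|^2)^2$, whence $d=1$. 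So your argument is correct in substance after the $b$ vs.\ $\psi_\gamma^{-1}(0)$ fix, but the holomorphic-factorization route buys you nothing that a one-line Schwarz application on (15) does not already give.
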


\begin{proof} First (see [D1]) the degree of $q$ is at most $d-1$. 
Write 
$$ p(z) = \sum_{|\alpha|=1}^d A_\alpha z^\alpha $$ 
$$ q(z) = \sum_{|\beta|=0}^{d-1} b_\beta z^\beta. $$
Here $b_{\bf 0}=1$. Assume there is an automorphism $\gamma \in \Gamma_f$ which moves the origin. By (5)
we have $\gamma= U \phi_a$. Note that $L_a(a) = a$. Proposition 3.3 guarantees
the existence of a constant $c_\gamma$, which we compute in two ways.
This constant satisfies
 $$ c_\gamma (||p||^2 - |q|^2) = c_\gamma {\mathcal H}(f) = {\mathcal H} (f \circ (U \phi_a)). $$ 
Put $f \circ (U \phi_a)= {P \over Q}$ where,
by our convention, $Q(0)=1$. By definition, 
$$ {\mathcal H} (f \circ (U \phi_a)) = ||P||^2 - |Q|^2. $$ 
Using formula (5) for $\phi_a$ gives a formula for $||P||^2 - |Q|^2$.
Since it is long we write 
$$ ||P(z)||^2 = {1 \over |q(a)|^2}  \left(|| \sum _{|\alpha|=1}^d A_{\alpha} (U(a-L_a(z))^\alpha (1- \langle z, a \rangle)^{d-|\alpha|}||^2 \right) \eqno (16.1) $$
$$ |Q(z)|^2 = {1 \over |q(a)|^2}  \left( \big| \sum _{|\beta| =0}^{d-1} b_{\beta} (U(a-L_a(z))^\beta (1- \langle z, a \rangle)^{d-|\beta|}\big|^2 \right). \eqno (16.2) $$

The factor of ${1 \over |q(a)|^2}$ arises in order to make $Q(0)=1$.
We evaluate (16.1) and (16.2) at $0$ to get
$$ ||P(0)||^2 - |Q(0)|^2 =  {1 \over |q(a)|^2} \ (||p(Ua)||^2 - |q(Ua)|^2). \eqno (17.1)  $$
Then we evaluate them at $a$, using $L_a(a) = a$, to get
$$ ||P(a)||^2 - |Q(a)|^2 = {1 \over |q(a)|^2} (- (1 - ||a||^2)^{2d}). \eqno (17.2) $$
Evaluating ${\mathcal H}(f)$ at $0$ and $a$, and using
$c_{\gamma} {\mathcal H}(f) = {\mathcal H}(f \circ \gamma)$,  yields both formulas
$$ c_\gamma(||p(0)||^2 -1) = - c_\gamma = {1 \over |q(a)|^2} \left(||p(Ua)||^2 - |q(Ua)|^2 \right) $$
$$ c_\gamma(||p(a)||^2 -|q(a)|^2) = - {1 \over |q(a)|^2} (1-||a||^2)^{2d}. $$
Formula (15) follows.

Let $f=p$ be a polynomial proper map with $p(0)=0$.
By Schwarz's lemma, $||p(z)||^2 \le ||z||^2$ on the ball. Since $||Ua||^2 = ||a||^2$, we get
$$ (1- ||p(a)||^2) \ (1- ||p(Ua)||^2) \ge (1 - ||a||^2)^2. $$
Plugging this inequality in (15) yields
$$ (1 - ||a||^2)^{2d} \ge (1- ||a||^2)^2, $$
which, since $a \ne 0$, can hold only if $d=1$. Hence $p$ is linear and the conclusion follows.
\end{proof}

\begin{corollary} Let $f:{\mathbb B}^n \to {\mathbb B}^N $ be a rational proper map.
Then $\Gamma_f$ is 
non-compact if and only if
$f$ is totally geodesic with respect to the Poincar\'{e} metric.  Otherwise,
$\Gamma_f$ lies in a maximal compact subgroup, i. e., a conjugate of ${\bf U}(n)$. \end{corollary}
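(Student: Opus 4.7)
The plan is to handle the ``Otherwise'' clause via a fixed-point characterisation of compactness, dispatch the easy direction of the equivalence by a direct computation, and reduce the main direction to Theorem 3.2 via an extension of its polynomial argument to the rational case.

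First I would recall the standard fact (Cartan's fixed point theorem on the Poincar\'e ball) that a closed subgroup of $\mathrm{Aut}(\mathbb{B}^n)$ is compact if and only if it fixes some point of $\mathbb{B}^n$, and that the isotropy subgroup of an interior point is a conjugate of $\mathbf{U}(n)$. Theorem 3.1 gives that $\Gamma_f$ is closed in $\mathrm{Aut}(\mathbb{B}^n)$, so when $\Gamma_f$ is compact, conjugating by an automorphism sending its fixed point to $0$ places $\Gamma_f$ inside $\mathbf{U}(n)$, which settles the final sentence.

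For the easy direction of the equivalence, if $f$ is totally geodesic then $f = \chi \circ L \circ \phi$ with $L(z) = (z, 0)$ and automorphisms $\chi \in \mathrm{Aut}(\mathbb{B}^N)$, $\phi \in \mathrm{Aut}(\mathbb{B}^n)$. Proposition 3.2 reduces to computing $\Gamma_L$. Checking directly that $L \circ U = (U \oplus I) \circ L$ for $U \in \mathbf{U}(n)$ and $L \circ \phi_a = \phi_{(a,0)} \circ L$ for $a \in \mathbb{B}^n$, one finds $\Gamma_L = \mathrm{Aut}(\mathbb{B}^n)$, so $\Gamma_f$ is conjugate to the whole (non-compact) source automorphism group.

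For the main implication, assume $\Gamma_f$ non-compact. Composing $f$ with a target automorphism I arrange $f(0) = 0$; this does not change $\Gamma_f$ by Proposition 3.2. Since $\Gamma_f$ has no fixed point in $\mathbb{B}^n$, iterating any non-elliptic element provides a sequence $\gamma_k = U_k \phi_{a_k} \in \Gamma_f$ with $\|a_k\| = \|U_k a_k\| \to 1$. Applying identity $(15)$ from Theorem 3.2 and rewriting with the quotient form $u$ defined by $|q|^2 - \|p\|^2 = u(z,\bar z)(1 - \|z\|^2)$ turns $(15)$ into
$$ u(a_k)\, u(U_k a_k) = (1 - \|a_k\|^2)^{2(d-1)}, $$
where $d = \deg f$.

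The crux, and main obstacle, is to produce a positive lower bound for the left side, which will then force $d = 1$. Schwarz's lemma $\|f(z)\| \le \|z\|$ rearranges to $u(z) \ge |q(z)|^2$ on $\mathbb{B}^n$, and continuity extends this to $\overline{\mathbb{B}^n}$. Invoking the extension result of Cima--Suffridge [CS], $f$ (hence $q$) extends holomorphically past the sphere, so $q$ is nowhere zero on $\overline{\mathbb{B}^n}$ and $|q|^2 \ge c^2 > 0$ there. Therefore $u(a_k)\, u(U_k a_k) \ge c^4$, while the right side of the identity tends to $0$ when $d \ge 2$. This contradiction forces $d = 1$, so $f$ is linear with $A^* A = I_n$; such a map is unitarily equivalent to $z \mapsto z \oplus 0$, i.e., a totally geodesic embedding with respect to the Poincar\'e metric, completing the proof.
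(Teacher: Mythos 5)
Your proof of the main implication follows the same route as the paper: apply Schwarz's lemma and the Cima--Suffridge non-vanishing of $q$ on the closed ball to formula (15), extract a sequence $\|a_k\|\to 1$ from the non-compactness, and conclude $d=1$. Your reformulation of (15) via the quotient form, $u(a)\,u(Ua)=(1-\|a\|^2)^{2(d-1)}$, is a clean algebraic repackaging of the paper's inequality (18), but the ingredients and conclusion are identical. Your invocation of Cartan's fixed-point theorem for the ``otherwise'' clause plays exactly the role of the paper's appeal to [HT] (a compact subgroup of $\mathrm{Aut}(\mathbb{B}^n)$ lies in a conjugate of $\mathbf{U}(n)$), and your explicit verification that $\Gamma_L = \mathrm{Aut}(\mathbb{B}^n)$ for the linear embedding fills in the easy direction, which the paper leaves implicit.

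One small overreach: you obtain the escaping sequence by ``iterating any non-elliptic element.'' That a closed non-compact subgroup of $\mathrm{Aut}(\mathbb{B}^n)$ must contain a parabolic or loxodromic element is true but not elementary --- if the identity component is non-compact one finds a suitable one-parameter subgroup, but if it is compact one must still rule out an infinite discrete group consisting entirely of elliptics. None of this machinery is needed: since Theorem 3.1 gives that $\Gamma_f$ is closed, non-compactness alone furnishes a sequence $\gamma_k = U_k\phi_{a_k}$ escaping every compact set, and the compactness of $\mathbf{U}(n)$ forces $\|a_k\|\to 1$ along a subsequence. That is the paper's extraction, and it is simpler and avoids the unjustified existence claim.
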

\begin{proof} Let $f= {p \over q}$ be of degree $d$. After composition with an automorphism
of the target, we may assume $f(0)=0$.
Assume $U\phi_a \in \Gamma_f$. In this case, Schwarz's lemma yields $||p(z)||^2 \le |q(z)|^2 \ ||z||^2$
for $z$ in the ball. Therefore 
$$ |q(z)|^2 - ||p(z)||^2 \ge |q(z)|^2 ( 1- ||z||^2). $$
 Since $||Ua||^2 = ||a||^2$, we plug this inequality into (15) to get
$$ (1- ||a||^2)^{2d} \ge (1- ||a||^2)^2 |q(a)|^2 |q(Ua)|^2 $$
and therefore
$$ (1- ||a||^2)^{2d-2} \ge |q(a)|^2 |q(Ua)|^2. \eqno (18) $$

If $\Gamma_f$ is not compact, then we can find a sequence of 
automorphisms $U_k \phi_{a_k} \in \Gamma_f$
where $||a_k||$ tends to $1$. Assume $d\ge 2$.
Then (18) implies that a subsequence of $q(a_k)$ or of $q(U_ka_k)$
tends to $0$. But, by [CS], the denominator $q$ 
cannot vanish on the closed ball. This contradiction therefore implies $d=1$. The degree of $q$
is smaller than the degree of $p$. Thus, if the degree of $p$ is $1$,
then $q$ is constant. Hence $f$ is linear and therefore totally geodesic.

Assume $f$ is not totally geodesic.
By Theorem 3.1, $\Gamma_f$ is closed in ${\rm Aut}({\mathbb B}^n)$. Thus $\Gamma_f$ is compact.
By standard Lie group theory (see [HT]), $\Gamma_f$
is contained in a maximal compact subgroup which must be a conjugate of ${\bf U}(n)$. 
\end{proof}

\begin{corollary} Let $p$ be a proper polynomial map between balls with
$p(0)=0$. Unless $p$ is of degree $1$, we have $\Gamma_p \subseteq {\bf U}(n)$. \end{corollary}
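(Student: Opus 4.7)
The plan is to deduce this corollary directly from the second part of Theorem 3.2, which was set up precisely for this situation. Since $p$ is a polynomial proper map between balls with $p(0)=0$, every automorphism $\gamma \in \Gamma_p$ decomposes as $\gamma = U \phi_a$ with $U \in \mathbf{U}(n)$ and $a \in \mathbb{B}^n$.

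First, I would argue by contradiction: assume $\deg p \ge 2$ and suppose there exists $\gamma \in \Gamma_p$ that is not unitary. Writing $\gamma = U \phi_a$, this non-unitarity is equivalent to $a \neq 0$, because $\phi_a(0)=a$ and $U$ fixes $0$, so $\gamma(0) = Ua = 0$ if and only if $a = 0$. Thus $\gamma$ moves the origin. Theorem 3.2 then applies and forces $p$ to be unitarily equivalent to $z \oplus 0$; in particular $\deg p = 1$, contradicting our assumption.

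Hence every $\gamma \in \Gamma_p$ must be of the form $\gamma = U \phi_0 = U$, which means $\Gamma_p \subseteq \mathbf{U}(n)$.

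There is essentially no obstacle here since all the work has been done in Theorem 3.2; the content of the corollary is simply the observation that an automorphism of $\mathbb{B}^n$ fixes the origin precisely when it is unitary, combined with the rigidity statement already established. One could alternatively derive the corollary from Corollary 3.2 by noting that $\Gamma_p$ is compact (since a polynomial map of degree $\ge 2$ is not totally geodesic) and hence lies in a maximal compact subgroup, but the direct route via Theorem 3.2 additionally pins down $\mathbf{U}(n)$ itself rather than only a conjugate of it, which is what the statement of the corollary requires.
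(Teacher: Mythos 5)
Your proof is correct and is the argument the paper intends: the corollary is an immediate consequence of the second assertion of Theorem 3.2, once one observes that in the decomposition $\gamma = U\phi_a$ the automorphism $\gamma$ fixes the origin exactly when $a=0$, i.e., exactly when $\gamma$ is unitary. Your closing remark is also apt: Corollary 3.2 would only place $\Gamma_p$ in a conjugate of ${\bf U}(n)$, whereas the normalization $p(0)=0$ together with Theorem 3.2 pins down ${\bf U}(n)$ itself, which is what the statement asserts.
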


\section{operations on rational proper maps between balls}

Given one or more rational proper maps, one can create additional proper maps
via certain constructions. See [D1], [D3], [D5].
We recall these constructions in this section and determine 
how they impact Hermitian invariance.

\begin{example} Assume $f$ and $g$ are rational proper maps with the same source.
Their {\bf tensor product} $f \otimes g$ is then a rational proper map. \end{example}

We also have a more subtle restricted tensor product operation.

\begin{example} Assume $f$ and $g$ are rational proper maps with the same source.
Let $A$ be a subspace of the target of $f$. 
Let $\pi_A$ denote orthogonal projection onto $A$. We define $E_{A,g} f$ by 
$$  E_{A,g}f = ((\pi_A f) \otimes g) \oplus (1- \pi_A)f. \eqno (19) $$
Then $E_{A,g}$ is a rational proper map.
When $g$ is the identity map, we simply write $E_Af$ or even $Ef$ when $A$ is fixed. 
In this case we call $Ef$ a {\bf first descendant} of $f$. \end{example}

We can also combine two proper maps via a process of {\bf juxtaposition}.

\begin{example} Assume $f$ and $g$ are rational proper maps with the same source, 
and $0 < \theta < {\pi \over 2}$.
The $\theta$-juxtaposition of $f$ and $g$, written $J_\theta(f,g)$, is defined by
$$ J_\theta(f,g) = \cos(\theta)f \oplus \sin(\theta)g. \eqno (20) $$ 
For each $\theta$, the map $J_\theta(f,g)$ is also a rational proper map. \end{example} 

The family of maps in Example 4.3 defines a homotopy from $f \oplus 0$ to $0 \oplus g$. 

\begin{example} We can juxtapose any finite
collection of proper maps (with the same source) as follows. Suppose $\lambda \in {\mathbb C}^k$ and
$||\lambda||^2 = 1$. We define

$$ J_\lambda(g_1,...,g_k) = \lambda_1 g_1 \oplus \lambda_2 g_2 \oplus ... \oplus \lambda_k g_k. \eqno (21) $$
Again, $J(g_1,...,g_k)$ is a rational proper map if each $g_j$ is. \end{example}

\begin{remark} Another construction is illustrated in Example 2.2, but this
process creates a larger source dimension as well as a larger Hermitian-invariant group. \end{remark}

We naturally wish to study the impact of the tensor operation from Example 
4.2 on Hermitian invariance. An elegant result holds  when the subspace 
$A$ is chosen as described below. See Example 4.5 for some possibilities
when $A$ is chosen otherwise.  

Let $f$ be a polynomial proper map of degree at least $2$.
We study the relationship between $\Gamma_f$ and $\Gamma_{Ef}$.
Write $f = \sum_{j=\nu}^d f_j$, where $f_j$ is homogeneous of degree $j$ and vector-valued.
Assume that $f$ itself is not homogeneous, that is, $\nu < d$. 
The condition that $f$ is a proper map forces various identities, one of which we need here.
Let $A$ denote the subspace of ${\mathbb C}^N$ 
spanned by the coefficient vectors of $f_\nu$.
Then the coefficients of $f_d$ are all orthogonal to $A$. As a result, we 
can define 
the descendant $Ef$ as in (19). It remains of degree $d$, but its order of vanishing $\nu$ has increased.
After finitely many steps, we obtain a descendant that is homogeneous, and 
hence linearly equivalent to $z^{\otimes d} \oplus 0$. (See [D1].)

Theorem 4.1 shows what happens to $\Gamma_f$ when we replace $f$ by $Ef$.
The intersection of the Hermitian invariant group with the unitary group does not decrease
under this tensor product operation. 
The intersection with ${\bf U}(n)$ in the 
statement is required for the following reason.
If $f$ is unitary, then $\Gamma_f$ is the full automorphism group. 
If we form the descendant $Ef$, where
$A$ equals the full target space,  then $\Gamma_{Ef}={\bf U}(n)$, 
and the group is smaller.  By intersecting with ${\bf U}(n)$, 
we avoid this situation. 

\begin{theorem} Let $f:{\mathbb C}^n \to {\mathbb C}^N$ be a 
polynomial proper map. Let $A$ be the subspace spanned by the 
coefficient vectors of the lowest order part of $f$.
Write  $Ef$ for $E_{A,z}f$, as defined in (19).
Then
$$ \Gamma_f \cap {\bf U}(n) \subseteq \Gamma_{Ef}.  $$\end{theorem}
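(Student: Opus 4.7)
The plan is to start from an arbitrary $U \in \Gamma_f \cap {\bf U}(n)$, produce the corresponding target unitary $V \in {\bf U}(N)$, use homogeneity to force $V$ to respect the decomposition $\mathbb{C}^N = A \oplus A^\perp$, and then assemble a target automorphism for $Ef$ by tensoring with $U$ on the $A$-block.

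First, by the standing normalization we may assume $f(0) = 0$, and since $U$ also fixes the origin, the target automorphism $\psi_U$ provided by $f \circ U = \psi_U \circ f$ must fix $0$ and hence is unitary; write it as $V \in {\bf U}(N)$. Expanding $f = \sum_{j=\nu}^d f_j$ into homogeneous pieces and comparing terms of like degree on both sides of $f(Uz) = V f(z)$ gives $f_j \circ U = V \circ f_j$ for every $j$. Applied at $j = \nu$, this yields $f_\nu(Uz) = V f_\nu(z)$. Since $U$ is a bijection of $\mathbb{C}^n$, the image of $f_\nu$ is $V$-invariant; as $A$ is by definition the span of this image (equivalently, of the coefficient vectors of $f_\nu$), we get $V(A) = A$. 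Because $V$ is unitary, $V(A^\perp) = A^\perp$ as well, so $V$ commutes with both $\pi_A$ and $1 - \pi_A$.

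Second, define the candidate target automorphism for $Ef$ by $W = (V|_A \otimes U) \oplus V|_{A^\perp}$, which is a unitary on the natural target space $(A \otimes \mathbb{C}^n) \oplus A^\perp$ of $Ef$. A direct calculation, using $V \pi_A = \pi_A V$ together with $f \circ U = V \circ f$, yields $W \circ Ef(z) = V\pi_A f(z) \otimes Uz \oplus V(1-\pi_A)f(z) = \pi_A f(Uz) \otimes Uz \oplus (1-\pi_A) f(Uz) = (Ef)(Uz)$. Thus $(Ef) \circ U = W \circ Ef$ with $W$ a unitary, hence $U \in \Gamma_{Ef}$, which is what was required.

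The main obstacle, and the place where the specific choice of $A$ matters, is the invariance $V(A) = A$. Intersecting with ${\bf U}(n)$ is what makes this argument run through homogeneity: a non-unitary automorphism of $\mathbb{B}^n$ would mix degrees in $f \circ U$ and destroy the clean block structure that lets $V$ be built into the tensor factor $V|_A \otimes U$. This is consistent with the remark preceding the theorem that unitary $f$ produces the artifact $\Gamma_{Ef} = {\bf U}(n)$, which is precisely why the intersection with ${\bf U}(n)$ is imposed in the statement.
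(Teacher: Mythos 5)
Your proof takes essentially the same route as the paper's: both reduce the problem to showing that the target unitary $V$ determined by $f\circ U = V\circ f$ preserves the subspace $A$, and both obtain this from the fact that $A$ is the span of the image of the lowest-order homogeneous piece $f_\nu$ together with $f_\nu\circ U = V\circ f_\nu$. The paper finishes by verifying ${\mathcal H}(Ef\circ\gamma) = {\mathcal H}(Ef)$ via the identity $(\pi_A f)\circ\gamma = V\,\pi_A f$; you instead exhibit the target automorphism $W = (V|_A\otimes U)\oplus V|_{A^\perp}$ for $Ef$ directly. These are logically equivalent presentations of the same idea, and the explicit $W$ is a clean packaging. (Your construction also absorbs the homogeneous case, which the paper treats separately: there $A = \mathbb{C}^N$ and $W = V\otimes U$.)

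One small slip to correct: you write ``by the standing normalization we may assume $f(0)=0$.'' That normalization is not available here. The paper's convention only fixes $q(0)=1$, which is vacuous for a polynomial, and composing $f$ with a target automorphism to move $f(0)$ to the origin would turn $f$ into a rational non-polynomial map and would change both $A$ and $Ef$. Polynomial proper maps with $f(0)\neq 0$ do exist, e.g.\ $\tfrac{1}{\sqrt2}(1,z)$. The fix is immediate and does not alter the rest of your argument: since $U$ is unitary, the constant $c_U$ of Proposition 3.3 equals $1$ (Remark 3.1), so $||f\circ U||^2 = ||f||^2$, and equality of squared norms of two holomorphic vector-valued polynomials forces the existence of a unitary $V$ with $f\circ U = V\circ f$. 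No fixed point at the origin is needed, and when $\nu=0$ the relation $f_0 = V f_0$ still gives $V(A)=A$.
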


\begin{proof} First note that $Ef(0)=0$; by Corollary 3.3, 
$\Gamma_{Ef} \subseteq {\bf U}(n)$. Next, if $f$ is homogeneous, then it is unitarily equivalent to
$z^{\otimes d}$. (See [D1].) In this case $A$ must be the full space, and $Ef= z^{\otimes (d+1)}$.
The conclusion holds because 
$$ \Gamma_f \cap {\bf U}(n) = {\bf U}(n) = \Gamma_{Ef}.   $$ 
Next suppose $f$ is not homogeneous. Put
$f = f_\nu + ... + f_d$, where $\nu < d$. Let $A$ be the span
of the coefficients of $f_\nu$. Formula (19) holds. Let $\gamma \in \Gamma_f \cap {\bf U}(n)$. Then
${\mathcal H}(f \circ \gamma) = {\mathcal H}(f)$. 
Put $\rho = ||z||^2 -1$. Then
$$ {\mathcal H}(Ef) = ||Ef||^2 -1 = ||f||^2 -1  + ||\pi_Af||^2 \ \rho = {\mathcal H}(f) + ||\pi_Af||^2 \ \rho, \eqno (22) $$
and for each unitary $\gamma$ we have
$$ {\mathcal H}(Ef \circ \gamma) = ||f \circ \gamma||^2 -1 + ||(\pi_Af) \circ \gamma||^2 \ \rho $$
$$ = 
{\mathcal H}(f\circ \gamma) + ||(\pi_Af) \circ \gamma||^2 \rho = {\mathcal H}(f) + ||(\pi_Af) \circ \gamma||^2 \rho. \eqno (23) $$
To finish the proof we need to show that the term $||(\pi_Af) \circ \gamma||^2$ in (23)
can be replaced with $||\pi_Af||^2$. 
We establish this equality as follows.

First we claim that $(\pi_Af) \circ \gamma = \pi_A(f \circ \gamma)$. 
Fix an orthonormal basis $\{e_k\}$ of $A$ and write 
$$ \pi_A f = \sum \langle f, e_j \rangle e_j. \eqno (24) $$
Replacing $f$ by $f \circ \gamma$ gives 
$$ \pi_A (f \circ \gamma) = \sum \langle f\circ \gamma, e_j \rangle e_j = (\pi_A f) \circ \gamma. \eqno (25) $$
Since ${\mathcal H}(f \circ \gamma) = {\mathcal H}(f)$, 
there is a unitary $U$ with $Uf = U \circ f =  f \circ  \gamma$.
Since $U$ is unitary, it preserves both orthonormal bases and inner products. Thus
$$ \pi_{UA} (Uf) = \sum \langle Uf, U(e_j)\rangle (Ue_j) = 
\sum \langle f, e_j \rangle (Ue_j) =  U \left(\sum \langle f, e_j \rangle e_j \right). $$
Hence $\pi_{UA}(Uf) = U (\pi_{A} f)$.
To finish, we need to note that $UA=A$. But 
$$A= \mathrm{Span}_{\mathbb{C}}\{f_{\nu}(z): z \in 
\mathbb{C}^n\}=\mathrm{Span}_{\mathbb{C}}\{f_{\nu}(\gamma(z)): z \in 
\mathbb{C}^n\}=$$
$$ \mathrm{Span}_{\mathbb{C}}\{Uf_{\nu}(z): z \in 
\mathbb{C}^n\}=UA.$$
We used of course that $f_\nu \circ \gamma = U f_\nu$. 
Again using  $Uf = f \circ  \gamma$ we obtain
$$ (\pi_A f) \circ \gamma = \pi_A(f \circ \gamma) = \pi_{A}(Uf) = \pi_{UA}(Uf) = U \pi_A f. $$
Therefore $ ||(\pi_A f) \circ \gamma||^2 = ||\pi_A f||^2 $
as required.
\end{proof}

\begin{corollary} Suppose $f$ is as in Theorem 4.1 and $f(0)=0$.
If, in addition, $f$ is also of degree at least $2$, then
$$ \Gamma_f \subseteq \Gamma_{Ef}. $$\end{corollary}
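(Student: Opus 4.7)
The plan is to observe that the hypotheses of Corollary 4.1 are arranged precisely so that the intersection with $\mathbf{U}(n)$ in Theorem 4.1 becomes automatic, making the corollary a direct consequence of Theorem 4.1 together with Corollary 3.3.

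More specifically, since $f$ is a polynomial proper map between balls with $f(0)=0$ and degree at least $2$, Corollary 3.3 applies and yields $\Gamma_f \subseteq \mathbf{U}(n)$. Consequently
\[
\Gamma_f \cap \mathbf{U}(n) = \Gamma_f.
\]
I would then invoke Theorem 4.1, which gives $\Gamma_f \cap \mathbf{U}(n) \subseteq \Gamma_{Ef}$, and chain the two to conclude $\Gamma_f \subseteq \Gamma_{Ef}$.

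There is essentially no obstacle here; the only thing worth double-checking is that the hypotheses of Theorem 4.1 are indeed met, namely that $A$ is chosen as the span of the coefficient vectors of the lowest order part of $f$ (so that $Ef$ is well-defined and of the same degree as $f$), which is part of the standing setup carried over from Theorem 4.1. Since $f(0)=0$ and $\deg f \ge 2$, the lowest order part $f_\nu$ has $\nu \ge 1$ and $\nu < d$, so $Ef$ is a nontrivial first descendant, and the argument applies verbatim.
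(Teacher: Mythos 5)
Your proof is correct and follows essentially the same route as the paper: the paper's one-line proof likewise invokes Corollary 3.3 (or Theorem 3.2) to place the relevant group(s) inside $\mathbf{U}(n)$, after which Theorem 4.1 gives the containment. You correctly note that only $\Gamma_f \subseteq \mathbf{U}(n)$ is actually needed to chain the inclusion.
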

\begin{proof} By Theorem 3.2 or Corollary 3.3, both groups lie in ${\bf U}(n)$. \end{proof}

\begin{example} First consider the identity map $p$
given by $p(z_1,z_2)= (z_1, z_2)$ in source dimension 
$2$. Its invariant group is the full automorphism group.
Let $A$ be the subspace spanned by $(0,1)$. Then $E_Ap$
is the Whitney map given by 
$$ E_Ap(z_1,z_2) = f(z_1,z_2) = (z_1, z_1 z_2, z_2^2) $$ and
$\Gamma_f = {\bf U}(1) \oplus {\bf U}(1)$. See Proposition 7.1.
Next let $A$ be the span 
of $(1,0,0)$, arising from the lowest order part of $f$ as in Theorem 4.1. 
Then $Ef$ is equivalent to the map $(z_1^2, \sqrt{2}z_1z_2, z_2^2)$, and hence
 $\Gamma_{Ef} = {\bf U}(2)$. See Theorem 4.2. The first step shows that
the conclusion of Theorem 4.1 does not hold when tensoring
on an arbitrary subspace. The second step illustrates the 
theorem.  \end{example}

\begin{example} In Section 7 we consider the polynomial map defined by
$$ f(z_1,z_2) = (z_1^3, \sqrt{3} z_1z_2, z_2^3). $$
We show that $\Gamma_f$ is the subgroup of ${\bf U}(2)$ generated by the diagonal unitary matrices
and the permutation matrix that switches the variables. Here $f = f_2 \oplus f_3$.
Let $A$ be the span of $(0,1,0)$.  Then $Ef= z^{\otimes 3}$ and
$\Gamma_{Ef} = {\bf U}(2)$. Thus $\Gamma_f \subsetneq \Gamma_{Ef}$. \end{example}

We next prove the following useful result regarding juxtaposition of maps.

\begin{proposition} Let $f,g$ be polynomial proper maps with the same source
and with Hermitian-invariant groups $\Gamma_f$ and $\Gamma_g$. Let $m$ be an integer larger
than the degree of $f$. Fix $\theta \in (0, {\pi \over 2})$.
Define a rational proper map $j(f,g)$ by
$$ j(f,g) = J_\theta(f, g \otimes z^{\otimes m}). $$
Then $\Gamma_{j(f,g)} \cap {\bf U}(n) = \Gamma_f \cap \Gamma_g \cap {\bf U}(n)$.
 \end{proposition}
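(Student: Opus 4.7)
The plan is to reduce the identity $\Gamma_{j(f,g)}\cap \mathbf{U}(n) = \Gamma_f\cap\Gamma_g\cap\mathbf{U}(n)$ to a polynomial identity in $(z,\overline z)$ that can be split by total degree. Since $j(f,g)$ is polynomial, Remark 3.1 tells us that a unitary $\gamma$ lies in $\Gamma_{j(f,g)}$ precisely when $\mathcal H(j(f,g)\circ\gamma)=\mathcal H(j(f,g))$, i.e., when $\|j(f,g)\circ\gamma\|^{2}=\|j(f,g)\|^{2}$ as polynomials. A direct computation using the definition of $J_\theta$ gives
$$\|j(f,g)\|^{2}=\cos^{2}(\theta)\,\|f\|^{2}+\sin^{2}(\theta)\,\|g\|^{2}\,\|z\|^{2m},$$
and, since $\|\gamma z\|^{2m}=\|z\|^{2m}$ for $\gamma\in\mathbf{U}(n)$, the condition $\gamma\in\Gamma_{j(f,g)}$ becomes
$$\cos^{2}(\theta)\bigl(\|f\circ\gamma\|^{2}-\|f\|^{2}\bigr)+\sin^{2}(\theta)\bigl(\|g\circ\gamma\|^{2}-\|g\|^{2}\bigr)\|z\|^{2m}=0. \quad (\ast)$$

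The key step is a degree argument. Let $A$ denote the first summand in $(\ast)$ and $B$ the second. Each monomial of $A$ has total degree in $(z,\overline z)$ at most $2\deg(f)$, and the hypothesis $m>\deg(f)$ gives $2\deg(f)<2m$; so every monomial of $A$ has total degree strictly less than $2m$. On the other hand, $\|z\|^{2m}$ is homogeneous of bidegree $(m,m)$, so every monomial appearing in $B$ has total degree at least $2m$. Therefore $A$ and $B$ occupy disjoint graded pieces of the polynomial ring in $(z,\overline z)$, and $(\ast)$ forces $A=0$ and $B=0$ separately. The vanishing of $A$ is exactly $\|f\circ\gamma\|^{2}=\|f\|^{2}$, i.e., $\gamma\in\Gamma_f$; the vanishing of $B$, combined with the fact that $\|z\|^{2m}$ is not a zero divisor, gives $\|g\circ\gamma\|^{2}=\|g\|^{2}$, i.e., $\gamma\in\Gamma_g$. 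This proves $\Gamma_{j(f,g)}\cap\mathbf{U}(n)\subseteq\Gamma_f\cap\Gamma_g\cap\mathbf{U}(n)$.

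The reverse inclusion is immediate: if $\gamma\in\Gamma_f\cap\Gamma_g\cap\mathbf{U}(n)$, then both differences in $(\ast)$ vanish, so $\|j(f,g)\circ\gamma\|^{2}=\|j(f,g)\|^{2}$ and $\gamma\in\Gamma_{j(f,g)}$. The only subtle point in the argument is the graded separation $2\deg(f)<2m$, which is exactly what the hypothesis $m>\deg(f)$ is designed to ensure; without it, cross-cancellations between the $f$-piece and the $g\otimes z^{\otimes m}$-piece could enlarge $\Gamma_{j(f,g)}\cap\mathbf{U}(n)$. I expect this degree bookkeeping, together with noting that $\gamma(0)=0$ so the constant term of $\|g\circ\gamma\|^{2}-\|g\|^{2}$ vanishes automatically, to be the only step requiring real care; the rest is a direct unwinding of the definitions of $J_\theta$, $\Gamma_{(\cdot)}$, and $\mathcal H$.
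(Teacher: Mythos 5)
Your proof is correct and follows essentially the same route as the paper's: compute $\mathcal H(j(f,g))$, reduce via Remark 3.1 to the polynomial identity $\|j(f,g)\circ\gamma\|^2 = \|j(f,g)\|^2$ for unitary $\gamma$, and split it by total degree using $m > \deg(f)$. You spell out the degree bookkeeping (the $A+B=0$ decomposition and the bound $2\deg(f) < 2m \le$ every total degree in $B$) a bit more explicitly than the paper does, which is a helpful clarification but not a different argument; the observation that the constant term of $\|g\circ\gamma\|^2-\|g\|^2$ vanishes is true but unnecessary, since the factor $\|z\|^{2m}$ already pushes all of $B$ into total degree at least $2m$.
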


\begin{proof} Write $c = \cos(\theta)$ and $s = \sin(\theta)$.  We note that
$${\mathcal H}(j(f,g)) =  c^2 ||f||^2 + s^2 ||z||^{2m} ||g||^2 - 1. $$
Hence, if $\gamma$ is unitary, then 
$${\mathcal H}(j(f,g) \circ \gamma) =  c^2 ||f\circ \gamma ||^2 + s^2 ||z||^{2m} ||g\circ \gamma||^2 - 1. \eqno (26) $$
If $||f \circ \gamma||^2 = ||f||^2$ and $||g \circ \gamma||^2 = ||g||^2$, then (26) gives
$|| j(f,g) \circ \gamma||^2 = ||j(f,g)||^2$. Therefore, 
$\Gamma_f \cap \Gamma_g \cap {\bf U}(n) \subseteq \Gamma_{j(f,g)} \cap {\bf U}(n)$.

To establish the opposite containment, choose $\gamma \in \Gamma_{j(f,g)} \cap {\bf U}(n)$.
It follows that $|| j(f,g) \circ \gamma||^2 = ||j(f,g)||^2$. 
Put $g_m= g \otimes z^{\otimes m}$.
Since the degree of $f$ is smaller than the degree of $g_m$, and $\gamma$ is linear, thus preserving degree, this equality forces
both $||f \circ \gamma||^2 = ||f||^2$ and $||g_m \circ \gamma||^2 = ||g_m||^2 = ||z||^{2m} ||g||^2$. 
The conclusion follows. \end{proof}

\begin{corollary} Assume in Proposition 4.1 that $f$ is of degree at least $2$
and that $f(0)=0$. 
Then $\Gamma_{j(f,g)} = \Gamma_f \cap \Gamma_g$.\end{corollary}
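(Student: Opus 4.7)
The plan is to leverage Proposition~4.1 together with Corollary~3.3 to remove the intersection with ${\bf U}(n)$ that appears there. The only real content is verifying that the hypotheses of Corollary~3.3 apply to both $f$ and to $j(f,g)$, so that the groups $\Gamma_f$ and $\Gamma_{j(f,g)}$ already lie in ${\bf U}(n)$.

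First I would record the following easy observations about $j(f,g) = J_\theta(f, g \otimes z^{\otimes m})$. Since $f$ and $g$ are polynomial and the construction $J_\theta$ together with tensor product with $z^{\otimes m}$ only involves orthogonal direct sums and tensor products, $j(f,g)$ is again a polynomial proper map between balls. Moreover $j(f,g)(0)=0$, because $f(0)=0$ and $z^{\otimes m}(0)=0$. Finally, the degree of $j(f,g)$ is the maximum of $\deg f$ and $\deg g + m$, which is at least $\deg f \ge 2$.

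Next I would apply Corollary~3.3 twice. Since $f$ is a polynomial proper map with $f(0)=0$ and $\deg f \ge 2$, we have $\Gamma_f \subseteq {\bf U}(n)$. Similarly, since $j(f,g)$ is polynomial, vanishes at $0$, and has degree at least $2$, Corollary~3.3 yields $\Gamma_{j(f,g)} \subseteq {\bf U}(n)$.

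With these two containments in hand, the corollary is immediate from Proposition~4.1. Indeed,
\[
\Gamma_{j(f,g)} \;=\; \Gamma_{j(f,g)} \cap {\bf U}(n) \;=\; \Gamma_f \cap \Gamma_g \cap {\bf U}(n) \;=\; \Gamma_f \cap \Gamma_g,
\]
where the first equality uses $\Gamma_{j(f,g)} \subseteq {\bf U}(n)$, the second is Proposition~4.1, and the third uses $\Gamma_f \subseteq {\bf U}(n)$ (so the factor of ${\bf U}(n)$ is redundant even though no unitary hypothesis is placed on $\Gamma_g$). There is no real obstacle here; the one thing to be careful about is that $\Gamma_g$ itself need not lie in ${\bf U}(n)$ (for instance $g$ could be linear), so the reduction to the unitary case must be done using the stronger constraint on $\Gamma_f$ and on $\Gamma_{j(f,g)}$ rather than on $\Gamma_g$.
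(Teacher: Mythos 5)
Your proposal is correct and follows essentially the same approach as the paper: apply Corollary 3.3 (or Theorem 3.2) to both $f$ and $j(f,g)$ to show $\Gamma_f$ and $\Gamma_{j(f,g)}$ lie in ${\bf U}(n)$, then combine with Proposition 4.1. Your additional remark about why the reduction to ${\bf U}(n)$ must come from $\Gamma_f$ and $\Gamma_{j(f,g)}$ rather than $\Gamma_g$ is a helpful clarification but not a departure from the paper's argument.
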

\begin{proof} By Theorem 3.2 or Corollary 3.3, both $\Gamma_f$ and $\Gamma_{j(f,g)}$ lie in ${\bf U}(n)$. \end{proof}

\begin{remark} Both statements $\Gamma_{J(f,g)} = \Gamma_f \cap \Gamma_g$
and $\Gamma_{J(f,g)} \cap {\bf U}(n) = \Gamma_f \cap \Gamma_g \cap {\bf U}(n)$ are false in general.
We need the tensor power $z^{\otimes m}$ to prevent interaction between $f$ and $g$.
See the next example.
\end{remark}

\begin{example} Put $f(z) = (z_1, z_1z_2, z_2^2)$ and $g(z) = (z_1^2, z_1z_2, z_2)$.
As in Example 4.5, $\Gamma_f = \Gamma_g = {\bf U}(1) \oplus {\bf U}(1)$.
With $\theta = {\pi \over 4}$, we have (after a unitary map)
$$ J_\theta(f,g) = {\sqrt{2} \over 2} (z_1^2, \sqrt{2} z_1z_2, z_2^2) \oplus {\sqrt 2 \over 2}(z_1,z_2). $$
Then $\Gamma_{J_\theta(f,g)} = {\bf U}(2) \ne {\bf U}(1) \oplus {\bf U}(1)$.
\end{example}

Following [D3], we next compute ${\mathcal H}(f)$ when $f$ is the tensor product
of automorphisms. For $||a|| < 1$ we put $c_a = 1-||a||^2$. We put 
$\omega_j = \omega_j(z,{\overline z})  = |1 - \langle z,a_j\rangle|^2$.

\begin{proposition} Let $f = {p \over q}$ be the tensor product
of $K$ automorphisms $\phi_{a_j}$. Assume each $a_j \ne 0$.
Formula (27) holds for the Hermitian form ${\mathcal H}(f)$:

$$ {\mathcal H}(f) = ||p||^2 - |q|^2 = \prod_{j=1}^K (c_j \rho + \omega_j)- \prod_{j=1}^K \omega_j. \eqno (27) $$
\end{proposition}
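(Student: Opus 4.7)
The plan is to reduce everything to a one-automorphism identity and then exploit the multiplicativity of squared norms under tensor products. First I would recall that any single ball automorphism $\phi_a$ can be written as $\phi_a(z) = P_a(z)/q_a(z)$ with $P_a(z) = a - L_a z$ and $q_a(z) = 1 - \langle z,a\rangle$. The classical identity
$$1 - \|\phi_a(z)\|^2 = \frac{(1-\|a\|^2)(1-\|z\|^2)}{|1-\langle z,a\rangle|^2}$$
(which expresses that $\phi_a$ preserves the ball) rearranges to
$$\|P_a(z)\|^2 = |1-\langle z,a\rangle|^2 + (1-\|a\|^2)(\|z\|^2 - 1) = \omega_a + c_a\rho.$$
This is the only nontrivial input, and once it is in place the rest is bookkeeping.

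Next I would write out the tensor product concretely: setting $P_j = P_{a_j}$ and $q_j = 1 - \langle z, a_j\rangle$, the tensor product of the fractions $\phi_{a_j} = P_j/q_j$ is
$$f = \frac{P_1 \otimes P_2 \otimes \cdots \otimes P_K}{q_1 q_2 \cdots q_K} = \frac{p}{q},$$
with the numerator a vector in $\mathbb C^{n+1} \otimes \cdots \otimes \mathbb C^{n+1}$ and the denominator a scalar polynomial. Since $q(0) = \prod_j 1 = 1$, this is already in the normalized form required by Definition 2.2 (and because each $P_j$ and $q_j$ share no common factor, a brief remark confirms that no cancellation occurs in the tensor product).

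The crux is then the identity $\|v \otimes w\|^2 = \|v\|^2\|w\|^2$ for vectors in finite-dimensional Hermitian spaces. Iterating this gives
$$\|p\|^2 \;=\; \prod_{j=1}^K \|P_j\|^2 \;=\; \prod_{j=1}^K (\omega_j + c_j\rho),$$
while $|q|^2 = \prod_j |q_j|^2 = \prod_j \omega_j$. Subtracting yields formula (27).

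There is no real obstacle here; the only thing worth double-checking is the single-automorphism identity $\|P_a\|^2 = \omega_a + c_a\rho$, which is a direct algebraic consequence of the Schwarz--Pick--type formula above. Everything else is the observation that tensor products turn squared norms into products and that a product of denominators $q_j$ with $q_j(0)=1$ requires no renormalization.
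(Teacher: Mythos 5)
Your proof is correct and follows essentially the same route as the paper: reduce to the single-factor identity $\|P_a\|^2 = \omega_a + c_a\rho$ and then invoke multiplicativity of squared norms under tensor products. The only cosmetic difference is that the paper cites its own formula (14) (specialized to $f=\mathrm{id}$) for the single-automorphism identity, whereas you rederive it from the Schwarz--Pick formula $1-\|\phi_a(z)\|^2 = (1-\|a\|^2)(1-\|z\|^2)/|1-\langle z,a\rangle|^2$; these are the same fact stated two ways.
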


\begin{proof} By (14), applied when $f$ is the identity map, the squared 
norm of the numerator of $\phi_{a_j}$ can be written (where $c_j = 1 - ||a_j||^2$):
$$ c_{j} \rho + \omega_j.  $$
Note that the squared norm of a tensor product is the product of the squared norms of the factors.
Hence the numerator of $f$ is the tensor product of the corresponding numerators and the denominator is the product of the corresponding denominators. 
As claimed, we obtain
$$ ||p||^2 - |q|^2 = \prod_{j=1}^K (c_j \rho + \omega_j)- \prod_{j=1}^K \omega_j.  $$\end{proof}

Formula (27) is a polynomial $\sum_{j=1}^K B_j \rho^j$ in the defining function $\rho$. The
coefficients $B_j$ are functions of $z,{\overline z}$ and symmetric in the $\phi_j$.
They satisfy simple formulas such as
$$ B_0 = 0 $$ 
$$ B_1 = \sum_j c_j \prod_{k \ne j} \omega_k$$
$$ B_2 = \sum_{j \ne k} c_j c_k \prod_{l \ne j,k} \omega_l$$
$$ B_K = \prod_{j=1}^K c_j. $$
These formulas indicate the symmetry in the points $a_j$. 

The next lemma enables us to go a bit beyond Corollary 3.3. 

\begin{lemma} Assume $|\lambda_1|^2 + |\lambda_2|^2 =1$, with $\lambda_2 \ne 0$.
Let $h$ be a proper mapping between balls.
Define a proper map  by $f = \lambda_1 \oplus \lambda_2 h$.
Then $\Gamma_f = \Gamma_h$. \end{lemma}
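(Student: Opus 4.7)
The plan is to use Proposition 3.3 to reduce the equality $\Gamma_f=\Gamma_h$ to a comparison of the Hermitian forms $\mathcal{H}(f)$ and $\mathcal{H}(h)$. Since $|\lambda_1|^2+|\lambda_2|^2 \, ||h||^2 = 1$ on the unit sphere, $f$ is a proper map to a ball, so the proposition applies to both $f$ and $h$.

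First I would write $h=p/q$ in the reduced form with $q(0)=1$, so that $\mathcal{H}(h)= ||p||^2 - |q|^2$. Then
$$f=\lambda_1\oplus\lambda_2 h=\frac{(\lambda_1 q,\,\lambda_2 p)}{q}.$$
Because $\lambda_2\neq 0$ and $p,q$ are coprime, the new numerator and $q$ share no common factor, and since $q(0)=1$ this is the reduced normalized form of $f$. Using $|\lambda_1|^2 = 1-|\lambda_2|^2$, a short calculation gives
$$\mathcal{H}(f) = |\lambda_1|^2|q|^2 + |\lambda_2|^2\, ||p||^2 - |q|^2 = |\lambda_2|^2\bigl(||p||^2 - |q|^2\bigr) = |\lambda_2|^2\,\mathcal{H}(h).$$

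For an arbitrary $\gamma\in\mathrm{Aut}(\mathbb{B}^n)$, I would apply the identical argument to $h\circ\gamma$ written in its reduced normalized form $p'/q'$, obtaining $\mathcal{H}(f\circ\gamma)=|\lambda_2|^2\,\mathcal{H}(h\circ\gamma)$. Since the positive factor $|\lambda_2|^2$ cancels from both sides, the conditions $\mathcal{H}(f\circ\gamma)=c\,\mathcal{H}(f)$ and $\mathcal{H}(h\circ\gamma)=c\,\mathcal{H}(h)$ are equivalent for the same constant $c$. Proposition 3.3 then yields $\gamma\in\Gamma_f$ if and only if $\gamma\in\Gamma_h$, so $\Gamma_f=\Gamma_h$.

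The only potentially delicate step, and really the sole obstacle, is verifying that $(\lambda_1 q,\lambda_2 p)/q$ is already in reduced normalized form; this is exactly where the hypothesis $\lambda_2\neq 0$ is used, since otherwise $q$ would divide the entire numerator and the identification of numerator and denominator of $f$ used to compute $\mathcal{H}(f)$ would change.
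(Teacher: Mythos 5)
Your proof is correct, but it takes a different route from the paper's. The paper composes $f$ with the target automorphism $\psi_a$ that sends $a = \lambda_1\oplus 0$ to the origin, computing that $\psi_a\circ f = 0\oplus(-h)$; Proposition 3.2 then gives $\Gamma_f = \Gamma_{0\oplus(-h)}$, and dropping the zero component (Corollary 3.4, up to a permutation unitary) yields $\Gamma_h$. Your argument instead works entirely at the level of Hermitian forms via Proposition 3.3, establishing the clean scalar identity $\mathcal{H}(f)=|\lambda_2|^2\mathcal{H}(h)$ and likewise for $f\circ\gamma$, so the constants in (12) agree. The paper's route is shorter and more conceptual, exploiting the target automorphism action that underlies Proposition 3.2; yours is more computational but more self-contained (it avoids the ``drop the zero'' step) and has the side benefit of producing the explicit proportionality of Hermitian forms, which is the kind of relation used repeatedly elsewhere in the paper. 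Your attention to the coprimality of $(\lambda_1 q,\lambda_2 p)$ and $q$ when $\lambda_2\neq 0$ is exactly the right detail to check, since Proposition 3.3 requires the reduced normalized form.
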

\begin{proof} Let $a= \lambda_1 \oplus 0$. Let $\psi_a$ be an automorphism
of the target with $\psi_a(0)=a$. Put $g= \psi_a \circ f$. Proposition 3.2 implies that
$\Gamma_g = \Gamma_f$. A simple computation shows that $g=0 \oplus(-h)$
and the conclusion follows.  \end{proof}

We gather much of the information we have obtained in the following theorem.

\begin{theorem} Let $\lambda = (\lambda_1,..., \lambda_k)$ satisfy $||\lambda||^2 =1 $.
Let $(m_1,...,m_k)$ be a $k$-tuple of distinct non-negative integers.
Let $f$ be the proper map defined by 
$$ f(z) = J_\lambda(z^{\otimes m_1}, ..., z^{\otimes m_k}) = \sum_{\oplus} \lambda_j z^{\otimes m_j}. $$ 
\begin{itemize}
\item Suppose $f(z) = z$. Then $\Gamma_f = {\rm Aut}({\mathbb B}^n)$.
\item For $m\ge 2$, put $f(z)=z^{\otimes m}$. Then $\Gamma_f = {\bf U}(n)$. 
\item If some $m_j \ge 2$, then $\Gamma_f = {\bf U}(n)$. 
\item If $k=2$ and $(m_1,m_2)= (0,1)$, then $\Gamma_f = {\rm Aut}({\mathbb B}^n)$. 
\item In all these cases, the source rank satisfies ${\bf s}(f) =1$. 
\end{itemize}
\end{theorem}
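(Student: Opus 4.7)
The plan is to handle the five bullets in turn, using two main ingredients already established: Corollary 3.3 (which forces $\Gamma_f \subseteq {\bf U}(n)$ whenever $f$ is polynomial with $f(0)=0$ and $\deg f \ge 2$) and Lemma 4.1 (which lets us absorb a constant summand into a target automorphism). The key elementary identity, used repeatedly, is $(Uz)^{\otimes m} = U^{\otimes m} z^{\otimes m}$ for $U \in {\bf U}(n)$; consequently, the block-diagonal operator $V = \bigoplus_j U^{\otimes m_j}$ is unitary and witnesses ${\bf U}(n) \subseteq \Gamma_f$ whenever $f$ has the displayed juxtaposed form with $f(0)=0$.

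For the first bullet ($f(z)=z$), set $\psi_\gamma = \gamma$ for each $\gamma \in \mathrm{Aut}({\mathbb B}^n)$; this gives $\mathrm{Aut}({\mathbb B}^n) \subseteq \Gamma_f$, with the reverse inclusion trivial. For the fourth bullet ($k=2$, $(m_1,m_2)=(0,1)$), $f = \lambda_1 \oplus \lambda_2 z$ is of the exact form covered by Lemma 4.1 with $h(z)=z$, so $\Gamma_f = \Gamma_z = \mathrm{Aut}({\mathbb B}^n)$.

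The two middle bullets share the target $\Gamma_f = {\bf U}(n)$. For the second bullet ($f = z^{\otimes m}$, $m \ge 2$), the tensor-power identity gives ${\bf U}(n) \subseteq \Gamma_f$, and $f(0)=0$ with $\deg f \ge 2$ lets Corollary 3.3 supply the reverse inclusion. For the third bullet, since the $m_j$ are distinct, at most one of them is $0$. If none is, then $f(0)=0$ and the same argument (tensor identity plus Corollary 3.3) finishes it. If, say, $m_1 = 0$, write $f = \lambda_1 \oplus \mu h$ where $\mu = \sqrt{1 - |\lambda_1|^2}$ and $h$ is the normalization of the remaining juxtaposition; then $h$ is proper with all exponents $\ge 1$, satisfies $h(0)=0$, and has degree $\max_{j \ge 2} m_j \ge 2$, so Lemma 4.1 yields $\Gamma_f = \Gamma_h$ and the previous subcase applies.

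The source rank statement is a uniform consequence of ${\bf U}(n) \subseteq \Gamma_f$, with no conjugation required: taking $G = {\bf U}(n)$ with $J = 1$ and $k_1 = n$ in formula (1) yields ${\bf S}(G) = n-(n-1) = 1$, hence ${\bf s}(f) \le 1$; the general bound ${\bf s}(f) \ge 1$ then forces equality. I expect the main technical wrinkle to be the reduction via Lemma 4.1 in the third bullet when a constant summand is present; once that reduction is carried out, everything rests on the tensor identity and Corollary 3.3.
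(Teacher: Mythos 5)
Your argument is correct and follows the paper's proof essentially step for step: the paper likewise dispatches the first and fourth bullets as immediate, proves the second via Corollary 3.3 (with the unitary inclusion implicit from the tensor-power identity you spell out), handles the third by Corollary 3.3 when no exponent is zero and by Lemma 4.1 combined with Corollary 3.3 when one is, and derives the source-rank statement from ${\bf S}({\bf U}(n)) \le 1$. The only difference is that you supply some of the routine details the paper leaves as ``clear.''
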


\begin{proof} By Definition 1.3,
$$ {\bf S}({\bf U}(n)) \le n- (n-1) =1. $$
The last item therefore follows from the previous items
because $\Gamma_f$ contains ${\bf U}(n)$.

The first and fourth items are clear.
The second item follows from Corollary 3.3. When none of the exponents $m_j$ is zero, the third
statement also follows from Corollary 3.3; if however some $k_l=0$ and some $k_l \ge 2$,
then Lemma 4.1 and Corollary 3.3 combine to yield the third item. \end{proof}

We next provide an alternative proof of Theorem 4.2 to illustrate
the connection with Hermitian forms more directly.

{\bf Alternate proof of Theorem 4.2}. Let $\rho=||z||^2-1$.
If $f(z) = z^{\otimes m}$, then ${\mathcal H}(f) = (1+ \rho)^m -1$. 
It follows that ${\bf U}(n) \subseteq \Gamma_f$. 
When $m=1$, the map $f$ is the identity and its invariant group is the full automorphism group.

When $m \ge 2$, we show directly that $\Gamma_f = {\bf U}(n)$. 
Let $\psi$ be an automorphism of the source ball.
Assume for some positive constant $c$ that
$${\mathcal H}(f \circ \psi) = c {\mathcal H}(f)$$
and hence that
$$ ||(U(a - L_az))^{\otimes m}  ||^2 - |1 - \langle z,a \rangle|^2 = c \left( ||z||^{2m} -1\right). \eqno (28) $$
To establish the result we need to show that $ \psi$
is necessarily unitary, or equivalently, that $a=0$.
First we rewrite the first term of the left-hand side of (28) as
$$   ||a-L_az||^{2m}.  $$
Using the technique of Proposition 4.2, we replace
this term with  
$$( c_a \rho + |1- \langle z,a \rangle|^2)^m $$ and obtain the crucial identity
$$ \left( c_a \rho + |1- \langle z,a \rangle|^2\right)^m - |1- \langle z,a \rangle|^{2m} = c( ||z||^{2m} - 1) = 
 c((1+\rho)^m -1). \eqno (29) $$
This identity holds for all $z$.
We expand the left-hand side of (29) as a polynomial in $\rho$, and then divide both sides by $\rho$.
After dividing, we set $\rho =0$ and obtain
$$ m c_a |1-\langle z,a \rangle|^{2m-2} = c m. \eqno (30) $$
The right-hand side of (30) is a constant; 
thus the left-hand side is also independent of $z$ (now restricted
to the unit sphere). Hence $a=0$, and $\psi$ is unitary.
It follows that $\Gamma_f \subseteq {\bf U}(n) \subseteq \Gamma_f$. 

Now consider the orthogonal sum of tensor powers, where some $m_j \ge 2$. 
We wish to show that $\Gamma_f = {\bf U}(n)$. Since ${\mathcal H}(f)$ is a function of $\rho$,
we have ${\bf U}(n) \subseteq \Gamma_f$ as before. We must show the opposite containment. 
Assume that there are two summands; the general case is similar. 
As before $||z||^{2m} = (1+ \rho)^m$ and we compare the coefficients of the first power
of $\rho$ that appears on both sides of 
$$ {\mathcal H}(f \circ \gamma) = k {\mathcal H}(f). $$
We have 
$$ {\mathcal H}(f) = |\lambda_2|^2 ||z||^{2m_2} + |\lambda_1|^2 ||z||^{2m_1} -1=  
|\lambda_2|^2 (1+ \rho)^{m_2} + |\lambda_1|^2  (1+ \rho)^{m_1} -1. $$
The coefficient of the linear term in $\rho$ in $k{\mathcal H}(f)$ is the constant
$$ k(m_1 |\lambda_1|^2 + m_2 |\lambda_2|^2). $$
Next we find the coefficient of $\rho$
in the expression for ${\mathcal H}(f \circ \gamma)$.
By a calculation similar to (28), it contains the factor $|1- \langle z,a \rangle|^{2m_2}$. These coefficients must be equal for all
$z$ on the unit sphere. This equality can happen only if $\langle z, a\rangle = 0$ for
all $z$ on the sphere. Hence $a=0$, and $\gamma$ is unitary. Thus $\Gamma_f \subseteq {\bf U}(n) \subseteq \Gamma_f$. 
Similar reasoning proves the result for an arbitrary (finite) number of summands.

\begin{remark}Orthogonal sums of tensor powers can exhibit subtle behavior.
Put
$$ f(z) = {{\sqrt 2} \over 2} \ (z \oplus z^{\otimes 2}). $$ 
There are automorphisms $\psi$ and $\gamma$ that move the origin
but for which $g = \psi \circ f \circ \gamma$ is a polynomial.
In this case, $\Gamma_g = \gamma^{-1} \circ \Gamma_f \circ \gamma$ is not 
contained in ${\bf U}(n)$. 
\end{remark}

\section{Group invariant rational maps}

We prove two theorems in this section of the following sort.
We assume a property of $\Gamma_f$ and put $f$ into a normal form.
We start with the following lemmas.

\begin{lemma} Assume $g:{\mathbb C}^n \to {\mathbb C}$ is a holomorphic polynomial
with $g(0) \ne 0$ and $|g(z)|^2 = \Phi(|z_1|^2, ..., |z_n|^2)$ for some real-analytic function
$\Phi$ of $n$ real variables. Then $g$ is a constant.  \end{lemma}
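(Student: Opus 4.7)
The plan is to compare Taylor coefficients of both sides of the identity $|g(z)|^2 = \Phi(|z_1|^2,\ldots,|z_n|^2)$, and observe that the right-hand side contains only ``diagonal'' monomials in $z,\overline z$.

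First I would expand $g$ as the (finite) power series
\[
g(z) = \sum_{\alpha} c_\alpha z^\alpha,
\]
so that
\[
|g(z)|^2 = \sum_{\alpha,\beta} c_\alpha \,\overline{c_\beta}\, z^\alpha \overline{z}^{\beta}.
\]
Since $\Phi$ is real-analytic in $n$ real variables, near the origin it has a convergent expansion $\Phi(t_1,\ldots,t_n) = \sum_\gamma a_\gamma t^\gamma$. Substituting $t_j = |z_j|^2 = z_j \overline{z_j}$ yields
\[
\Phi(|z_1|^2,\ldots,|z_n|^2) = \sum_\gamma a_\gamma\, z^\gamma \overline{z}^\gamma,
\]
so only diagonal monomials $z^\gamma \overline{z}^\gamma$ appear.

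By uniqueness of the Taylor expansion in $(z,\overline z)$ at the origin, matching coefficients of $z^\alpha \overline{z}^\beta$ forces $c_\alpha \,\overline{c_\beta} = 0$ whenever $\alpha \ne \beta$. Since $c_0 = g(0) \ne 0$, taking $\beta = 0$ gives $c_\alpha \,\overline{c_0} = 0$ for every $\alpha \ne 0$, hence $c_\alpha = 0$ for all $\alpha \ne 0$. Therefore $g = c_0$ is constant.

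There is essentially no obstacle; the argument is a direct coefficient comparison. The only thing to be a little careful about is invoking real-analyticity of $\Phi$ to get the monomial expansion in the $|z_j|^2$ (as opposed to, say, a formal series), and noting that the identity of polynomials in $(z,\overline z)$ is equivalent to equality of Taylor coefficients.
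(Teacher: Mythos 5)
Your proof is correct and follows essentially the same approach as the paper's: the paper writes $g = g(0) + h$ and observes that the cross term $2\operatorname{Re}(h(z)\overline{g(0)})$ contributes purely holomorphic monomials to $|g|^2$, which cannot occur in any function of $|z_1|^2,\ldots,|z_n|^2$; your coefficient comparison makes the same observation explicit by noting that $c_\alpha\overline{c_0}=0$ for $\alpha\ne 0$. The only difference is that you verify the full diagonal constraint $c_\alpha\overline{c_\beta}=0$ for all $\alpha\ne\beta$, of which the paper uses only the $\beta=0$ case.
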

\begin{proof} Write $g(z) = g(0) + h(z)$ where $h$ vanishes at $0$.
Then 
$$ |g(z)|^2 = |g(0) + h(z)|^2 = |g(0)|^2 + |h(z)|^2 + 2 {\rm Re} ( h(z) {\overline {g(0)}}). $$
If $h$ is not identically $0$, then the Taylor expansion of $|g(z)|^2$ includes holomorphic terms
and hence cannot be a function $\Phi$ of the squared absolute values 
of the coordinate functions. \end{proof}

Note that the conclusion is false
without the assumption $g(0)\ne 0$. For example, we could put $g(z)= z_1$. 

Let $I_m$ denote the $m-$dimensional identity matrix, including the possibility that $m = 0$. For any fixed triple $m,k,l$ of non-negative integers with $m+k+l=n$, we write the coordinates in 
$\mathbb{C}^n$ as 
$$z=(z', z'', z''')= z' \oplus z'' \oplus z'''.$$ Here
$z'$ has $m$ components, $z''$ has $k$ components, and $z'''$ has
$l$ components.

\begin{lemma} Let $g(\zeta, {\overline \zeta})$ be a Hermitian
symmetric polynomial on ${\mathbb C}^k$.
Assume $g(U\zeta, {\overline {U \zeta}}) = g(\zeta, {\overline \zeta})$ for every $U \in {\bf U}(k)$.
Then there is a polynomial $h$ such that
$$ g(\zeta, {\overline \zeta}) = h(||\zeta||^2). $$
A similar conclusion holds when $g$ depends on other parameters; $h$ will depend on these other parameters
as well.
\end{lemma}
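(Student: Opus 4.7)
The plan is to exploit the transitivity of $\mathbf{U}(k)$ on spheres $\{\|\zeta\|^2 = r\}$ and then reduce the question of polynomiality to the one-variable case. First I would note that for every $r \geq 0$, any two vectors $\zeta, \zeta' \in \mathbb{C}^k$ with $\|\zeta\|^2 = \|\zeta'\|^2 = r$ are related by some $U \in \mathbf{U}(k)$. The $\mathbf{U}(k)$-invariance hypothesis therefore says $g(\zeta, \overline{\zeta})$ depends only on $\|\zeta\|^2$; that is, there is a function $\phi : [0, \infty) \to \mathbb{R}$ with $g(\zeta, \overline{\zeta}) = \phi(\|\zeta\|^2)$.

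Second, I would identify $\phi$ with a polynomial by restricting to the line $\zeta = \zeta_1 e_1$. The resulting polynomial $G(\zeta_1, \overline{\zeta_1}) = g(\zeta_1 e_1, \overline{\zeta_1 e_1})$ equals $\phi(|\zeta_1|^2)$. The subgroup $\{e^{i\theta} I_k : \theta \in \mathbb{R}\} \subseteq \mathbf{U}(k)$ acts on $G$ via $\zeta_1 \mapsto e^{i\theta}\zeta_1$, so invariance kills every monomial in which the holomorphic and antiholomorphic bidegrees differ. Hence $G$ is a polynomial in $|\zeta_1|^2$; writing $G = \sum_d c_d |\zeta_1|^{2d}$, we get $\phi(s) = \sum_d c_d s^d =: h(s)$, and substituting back into $g(\zeta, \overline{\zeta}) = \phi(\|\zeta\|^2)$ yields $g(\zeta, \overline{\zeta}) = h(\|\zeta\|^2)$, as required.

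For the parenthetical generalization to polynomials with extra parameters, the same argument goes through verbatim: at each step, the coefficients of $g$ (viewed as a polynomial in $\zeta, \overline{\zeta}$) may lie in an arbitrary coefficient ring containing the extra parameters, and the diagonal circle action annihilates every monomial of unequal holomorphic and antiholomorphic bidegree regardless of those coefficients. I do not anticipate a genuine obstacle; the one sanity check is that $\phi$ really is given by a polynomial rather than an arbitrary function on $[0, \infty)$, and this is automatic because the restriction $G$ already has polynomial dependence on $\zeta_1$, which pins down $\phi$ on a set with accumulation points and hence forces $h$ to be a polynomial globally.
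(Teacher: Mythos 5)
Your proof is correct, and it takes a genuinely different route from the paper's. The paper argues by induction on the degree of $g$: since $g$ is constant on the unit sphere (by transitivity of ${\bf U}(k)$ on that sphere), $g - g(p,\overline p)$ is divisible by $\|\zeta\|^2 - 1$, and the quotient is a ${\bf U}(k)$-invariant polynomial of strictly lower degree, to which the inductive hypothesis applies; unwinding gives $g = h(\|\zeta\|^2)$. Your argument instead uses transitivity on \emph{every} sphere to get $g = \phi(\|\zeta\|^2)$ for some a priori arbitrary function $\phi$, then pins down $\phi$ as a polynomial by restricting to a complex line and invoking invariance under the diagonal circle $\{e^{i\theta} I_k\}$ to kill all monomials of unbalanced bidegree. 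The paper's induction dovetails naturally with the ``quotient form'' machinery used elsewhere in the paper (dividing by $\|\zeta\|^2 - 1$ is exactly the device behind Definition 2.2 and Lemma 5.4), whereas your argument is self-contained, more elementary, and makes the structural reason for the result transparent in one pass. Both handle the parametrized version correctly; your observation that the circle action argument works coefficient-by-coefficient is precisely what is needed for the application in Lemma 5.3.
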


\begin{proof} We prove the lemma by induction on the degree of $g$. 
The conclusion is clear when $\mathrm{deg}(g) \leq 1$.
Now assume that the conclusion holds when $\mathrm{deg}(g) \leq \mu$ for some $\mu \geq 1$ 
and suppose $\mathrm{deg}(g) =\mu +1$. 
Fix a point $p$ on the unit sphere.
The assumption implies 
$$ g(\zeta, {\overline \zeta}) = g(p, \overline{p}) $$ 
for each $\zeta$ on the unit sphere. Consider the quotient
$$ u(\zeta, {\overline \zeta}) = {g(\zeta, {\overline \zeta}) - g(p,{\overline p}) \over ||\zeta||^2 -1 }. $$
The quotient $u$ satisfies the same hypotheses as $g$, but it is of lower degree.
By the induction hypothesis we can write $u(\zeta, {\overline \zeta}) = H(||\zeta||^2)$ for some $H$.
Thus 
$$g(\zeta, {\overline \zeta}) = (||\zeta||^2 -1) H(||\zeta||^2) + g(p,{\overline p}) = h(||\zeta||^2). $$ \end{proof}

\begin{lemma} Let $\Gamma_f$ be the Hermitian invariant group of a proper rational map $f$.
Suppose for non-negative integers $m,k,l$ that
$\{I_{m}\} \oplus {\bf U}(k) \oplus \{ I_{l}\}$ is contained in a conjugate of $\Gamma_f$ by $\varphi$. Then there is real-valued polynomial
$h$ such that 
$$||f \circ \varphi||_l^2=h(z', \overline{z'}, ||z''||^2, z''', \overline{z'''}). $$ 
\end{lemma}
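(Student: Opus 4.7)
The plan is to transfer the invariance from $f$ to $g := f \circ \varphi$, use the equivariance relation $g \circ U = \psi_U \circ g$ to show that $\|g\|_l^2$ is invariant under $\mathbf{U}(k)$ acting on the middle coordinates, and then invoke Lemma 5.2 to extract the claimed functional dependence.

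First, by Proposition 3.2, $\Gamma_g = \varphi^{-1} \circ \Gamma_f \circ \varphi$, so $\Gamma_g$ contains $H := \{I_m\} \oplus \mathbf{U}(k) \oplus \{I_l\}$. For each $U = I_m \oplus V \oplus I_l$ with $V \in \mathbf{U}(k)$, the definition of $\Gamma_g$ supplies a target automorphism $\psi_U \in \mathrm{Aut}(\mathbb{B}^N_l)$ with $g \circ U = \psi_U \circ g$.

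Second, I would show that $\|g \circ U\|_l^2 = \|g\|_l^2$. After left-composing $g$ with a single automorphism of $\mathbb{B}^N_l$ (which by Proposition 3.2 does not alter $\Gamma_g$, and which only rescales $\|\cdot\|_l^2$ by a positive constant), one may assume $g(0) = 0$. Evaluating $g \circ U = \psi_U \circ g$ at $z=0$ then forces $\psi_U(0) = 0$. Origin-fixing automorphisms of $\mathbb{B}^N_l$ are induced by block-diagonal elements of $\mathbf{U}(N-l,l) \times \mathbf{U}(1)$ inside $\mathbf{SU}(N-l,l+1)$; such maps preserve $\|\cdot\|_l^2$ on the numerator and multiply the denominator by a unimodular scalar. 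Hence $\|\psi_U \circ g\|_l^2 = \|g\|_l^2$, so $\|g \circ U\|_l^2 = \|g\|_l^2$. Equivalently, $\|g(z', Vz'', z''')\|_l^2 = \|g(z', z'', z''')\|_l^2$ for every $V \in \mathbf{U}(k)$.

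Third, the identity above exhibits $\|g\|_l^2$ as $\mathbf{U}(k)$-invariant in $z''$ with $z', z'''$ appearing as parameters. The parametrized form of Lemma 5.2 then produces a polynomial $h$ with $\|g\|_l^2 = h(z', \overline{z'}, \|z''\|^2, z''', \overline{z'''})$, and undoing the Step 2 normalization only rescales by a positive constant. The main obstacle is Step 2: one must verify carefully that origin-fixing elements of $\mathrm{Aut}(\mathbb{B}^N_l)$ really are projective generalized-unitaries preserving $\|\cdot\|_l^2$ exactly, a structural fact flagged in the preliminaries but not yet used in detail. A secondary wrinkle is that when $g = p/q$ is genuinely rational, $\|g\|_l^2$ is a quotient of polynomials; one then applies Lemma 5.2 to $\mathcal{H}_l(g) = \|p\|_l^2 - |q|^2$ and separately to $|q|^2$ (both $\mathbf{U}(k)$-invariant for the same reason, since $q \circ U$ differs from $q$ by a unimodular factor), recovering $h$ by division.
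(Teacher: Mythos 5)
Your proof is correct and takes essentially the same route as the paper: pass to $g = f \circ \varphi$ via Proposition~3.2, normalize $g(0)=0$ so that the block structure of origin-fixing automorphisms of $\mathbb{B}^N_l$ gives $\|g\circ U\|_l^2 = \|g\|_l^2$ for $U \in \{I_m\}\oplus\mathbf{U}(k)\oplus\{I_l\}$, then apply the parametrized Lemma~5.2. You fill in more detail than the paper's terse one-line invariance step, and you correctly flag that the statement is most naturally read as being about the polynomial $\mathcal{H}_l(f\circ\varphi)$ rather than the rational function $\|f\circ\varphi\|_l^2$, which is indeed how the lemma is used in the proofs of Theorems 5.1 and 5.2.
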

\begin{proof}
It suffices to prove the result when $\varphi$ is the identity, in which case we 
assume $\{I_{m}\} \oplus {\bf U}(k) \oplus \{ I_{l}\} \subseteq \Gamma_f$. We write 
$\mathcal{H}(f)=g(z',\overline{z'}, z'', \overline{z''}, z''',\overline{z'''})$. We obtain
$$ g(z', \overline{z'}, z'', \overline{z''}, z''', \overline{z'''})=g(z',\overline{z'}, Uz'', \overline{Uz''}, z''', \overline{z'''}) $$
for every $U \in {\bf U}(k)$. The conclusion follows from Lemma 5.1, with $z''=\zeta$. 
\end{proof}

\begin{lemma}
Let $h$ be a real-valued polynomial in $\mathbb{C}^m \times \mathbb{R} 
\times \mathbb{C}^l$.  There exist linearly independent holomorphic 
polynomials $f_1, 
\dots f_p, g_1, \dots, g_q,$  such that
$$h(z', \overline{z'}, ||z''||^2, z''',\overline{z'''})=
\sum_{i=1}^p |f_i(z)|^2-\sum_{j=1}^q |g_j(z)|^2. $$
Moreover, there are 
real-valued polynomials $\Psi$ and $\Phi$, defined  
in $\mathbb{C}^m \times \mathbb{R} \times \mathbb{C}^l$, with
$$||f(z)||^2 = \sum_{i=1}^p|f_i(z)|^2=\Psi(z',\overline{z'},
|z''|^2, z''', \overline{z'''})$$
and
$$ ||g(z)||^2 = \sum_{j=1}^q |g_j(z)|^2=\Phi(z',\overline{z'},|z''|^2, 
z''', \overline{z'''}).$$ 
\end{lemma}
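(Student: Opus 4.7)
The plan is to realize $h$ as a Hermitian form on a finite-dimensional space of holomorphic polynomials, and then use the $\mathbf{U}(k)$-symmetry of $h$ (implicit in the fact that $h$ depends on $z''$ only through $\|z''\|^2$) to split that form into a canonical positive and negative part. Concretely, write $h(z,\bar z) = \sum c_{\alpha\beta}\, z^\alpha \bar z^\beta$ with $c_{\alpha\beta}=\overline{c_{\beta\alpha}}$, and let $V$ be the complex vector space spanned by the monomials $\{z^\alpha\}$ that appear. The coefficients define a Hermitian form $H$ on $V$ by $H(z^\alpha,z^\beta)=c_{\alpha\beta}$. A standard diagonalization of $H$ already gives \emph{some} decomposition $h=\sum |f_i|^2-\sum |g_j|^2$ with linearly independent $f_i,g_j$; the real content of the lemma is that the two sums can be chosen to depend on $z''$ only through $\|z''\|^2$.

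To arrange this, I would next exploit the obvious $\mathbf{U}(k)$-action $\rho(U) p(z):=p(z',Uz'',z''')$ on $V$, which preserves degree. The hypothesis that $h$ depends on $z''$ only via $\|z''\|^2$ translates (after comparing monomial coefficients) into the statement that $\rho$ preserves $H$. Choose any $\mathbf{U}(k)$-invariant positive-definite Hermitian inner product on $V$ (average any inner product over the compact group $\mathbf{U}(k)$); with respect to it, $\rho$ is unitary. Writing $H(\cdot,\cdot)=\langle A\cdot,\cdot\rangle$ for a self-adjoint operator $A$, the operator $A$ commutes with $\rho$, so its positive, zero, and negative eigenspaces $V_+,V_0,V_-$ are $\mathbf{U}(k)$-invariant subspaces on which $H$ is respectively positive definite, zero, and negative definite.

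Next I would pick an $H$-orthonormal basis $\{f_1,\dots,f_p\}$ of $V_+$ and a $(-H)$-orthonormal basis $\{g_1,\dots,g_q\}$ of $V_-$. The union is linearly independent because $V_+\cap V_-=\{0\}$, and by construction
\[
h(z,\bar z)=\sum_{i=1}^{p}|f_i(z)|^2-\sum_{j=1}^{q}|g_j(z)|^2,
\]
which is the first assertion. For the second assertion, the key observation is that the diagonal kernel $\Psi(z,\bar z):=\sum_i |f_i(z)|^2$ is the reproducing kernel on the diagonal of $(V_+,H|_{V_+})$, and therefore independent of the choice of $H$-orthonormal basis of $V_+$. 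Since $\rho(U)$ preserves both $V_+$ and $H|_{V_+}$, the collection $\{\rho(U)f_i\}$ is another $H$-orthonormal basis of $V_+$, and hence
\[
\Psi(z,\bar z)=\sum_i|\rho(U)f_i(z)|^2=\Psi(z',\bar{z'},Uz'',\overline{Uz''},z''',\bar{z'''})
\]
for every $U\in\mathbf{U}(k)$. Applying Lemma~5.2 (in the form already used in Lemma~5.3) now produces the polynomial $\Psi$ in $(z',\bar{z'},\|z''\|^2,z''',\bar{z'''})$. The identical argument with $V_-$ yields $\Phi$, and setting $f=(f_1,\dots,f_p)$ and $g=(g_1,\dots,g_q)$ finishes the proof.

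The main obstacle is the $\mathbf{U}(k)$-invariance of the two pieces $\sum|f_i|^2$ and $\sum|g_j|^2$: any naive diagonalization only guarantees invariance of their difference $h$, not of each summand separately. The fix is exactly the invariant-inner-product / reproducing-kernel argument above, which makes the pair $(\Psi,\Phi)$ canonical (independent of the orthonormal-basis choice) and therefore automatically $\mathbf{U}(k)$-invariant whenever $H$ is.
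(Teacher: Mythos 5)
The paper's own proof is short and elementary: it writes
$h(z',\bar{z'},\|z''\|^2,z''',\bar{z'''})=\sum_{k=0}^d C_k(z',\bar{z'},z''',\bar{z'''})\,\|z''\|^{2k}$,
decomposes each real-valued coefficient $C_k$ as $\|a_k\|^2-\|b_k\|^2$ with the $a_{ki},b_{kj}$ linearly independent (a purely linear-algebraic step in the $(z',z''')$ variables), and then tensors each $a_k$, $b_k$ with $(z'')^{\otimes k}$ so that $\|z''\|^{2k}$ appears as the squared norm of a tensor power. Linear independence survives tensoring, and the $\mathbf{U}(k)$-invariance of each of the two resulting sums is manifest because each summand carries the factor $\|z''\|^{2k}$. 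Your approach is genuinely different: you want to decompose the global Hermitian form equivariantly via an averaged invariant inner product, and then invoke basis-independence of the diagonal reproducing kernel together with Lemma~5.2. That strategy is conceptually attractive, and the equivariance step (commutation of the self-adjoint operator $A$ with $\rho$, hence $\rho$-invariance of the spectral subspaces $V_{\pm}$) and the appeal to Lemma~5.2 are both on the right track.

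However, there is a concrete error in the way you translate between the Hermitian polynomial $h$ and the sesquilinear form on $V$. Defining $H(z^\alpha,z^\beta)=c_{\alpha\beta}$ on the span $V$ of the monomials, and then taking an $H$-orthonormal basis $\{f_i\}$ of $V_+$, does \emph{not} give $h=\sum|f_i|^2-\sum|g_j|^2$. Already in one variable with $h(z,\bar z)=4|z|^2$ one has $H(z,z)=4$, so the $H$-orthonormal basis of $V_+=\mathrm{span}\{z\}$ is $\{z/2\}$, yet $|z/2|^2=|z|^2/4\neq h$; the correct choice is $f_1=2z$. The relationship you need is the ``Gram-factorization'' one: if $C=(c_{\alpha\beta})$ is split (equivariantly, via your invariant inner product) as $C=C_+-C_-$ with $C_\pm\geq 0$ and $\rho$-invariant, and one writes $C_+=FF^*$ and $C_-=GG^*$ with $F,G$ of full column rank, then the columns of $F$ and $G$, read as polynomials $f_i=\sum_\alpha F_{\alpha i}z^\alpha$ and $g_j=\sum_\alpha G_{\alpha j}z^\alpha$, are the objects you want, and the basis-independence of $\sum_i|f_i|^2=\mathbf{m}(z)^T C_+\overline{\mathbf{m}(z)}$ does come from the reproducing-kernel argument. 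An $H$-orthonormal basis, in contrast, is orthonormal for the (pseudo-)inverse form, not for the Gram matrix, so your ``by construction'' claim is the one step that fails. The fix is to replace the orthonormal-basis step with the $C=C_+-C_-=FF^*-GG^*$ factorization (equivariant because the spectral projections of $A$ commute with $\rho$); once that is done, the rest of your argument — invariance of $h_\pm$, reproducing-kernel independence, and Lemma~5.2 with parameters — goes through and yields the lemma by a route that is genuinely different from, and somewhat more machinery-heavy than, the paper's coefficient-by-coefficient expansion in $\|z''\|^2$.
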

\begin{proof} Write $h(z', \overline{z'}, ||z''||^2, z''', \overline{z'''})=
\sum_{k=0}^d C_k(z', \overline{z'},  z''', \overline{z'''})||z''||^{2k}$. 
Here the $C_k$ are real-valued polynomials. For each $1 \leq k \leq d$ we write
$$C_k(z',\overline{z'},  z''', \overline{z'''})=\sum_{i=1}^{p_k} 
|a_{ki}(z)|^2 -\sum_{j=1}^{q_k} |b_{kj}(z)|^2 = ||a_k(z)||^2 - 
||b_k(z)||^2. 
$$
Here $\{a_{k1}, \cdots, a_{kp_{k}}, b_{k1}, \cdots, b_{kq_k}\}$ is 
a linearly independent set. Then 
$$h(z', \overline{z'}, ||z'||^2, z''', \overline{z'''})=\sum_{k=0}^d \left( \sum_{i=1}^{p_k} |a_{ki}(z)|^2 -\sum_{j=1}^{q_k} |b_{kj}(z)|^2 \right) ||z''||^{2k} $$
$$ = \sum_k ||a_k(z) \otimes (z'')^{\otimes k}||^2 - ||b_k(z) \otimes (z'')^{\otimes k}||^2 = ||f(z)||^2 - ||g(z)||^2. $$
Linear independence is preserved under tensoring and the existence of the desired $f_j$ and $g_j$ follows.
\end{proof}

We now can prove two useful results. 

\begin{theorem} Assume $N \ge n\ge 1$. 
Let $f: \mathbb{B}^n \to \mathbb{B}^N$ be a rational holomorphic proper map with Hermitian invariant group
$\Gamma_f$. Then $\Gamma_f$ 
contains an $n$-torus, that is,  a conjugate of ${\bf U}(1) \oplus ... \oplus {\bf U}(1)$,
if and only if $f$ is spherically equivalent to a monomial map.
\end{theorem}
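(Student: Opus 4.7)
The plan is to establish the two directions separately, starting with the easier one.

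For the ``if'' direction, suppose $f$ is spherically equivalent to a monomial map $g$, say $f = \chi \circ g \circ \varphi$ with $\chi \in \mathrm{Aut}(\mathbb{B}^N)$ and $\varphi \in \mathrm{Aut}(\mathbb{B}^n)$. Writing $g(z) = (c_1 z^{\alpha_1}, \dots, c_N z^{\alpha_N})$, the action of a diagonal unitary $\gamma = \mathrm{diag}(e^{i\theta_1}, \dots, e^{i\theta_n}) \in T^n$ satisfies $g \circ \gamma = D_\gamma \, g$ with $D_\gamma = \mathrm{diag}(e^{i\langle \alpha_j,\theta\rangle})$, so $T^n \subseteq \Gamma_g$. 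Proposition 3.2 then gives $\Gamma_f = \varphi^{-1} \circ \Gamma_g \circ \varphi$, which contains the conjugate $\varphi^{-1} \circ T^n \circ \varphi$.

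For the ``only if'' direction, assume $\Gamma_f$ contains $\phi \circ T^n \circ \phi^{-1}$ for some $\phi \in \mathrm{Aut}(\mathbb{B}^n)$. Replacing $f$ by $f \circ \phi$ (which is spherically equivalent to $f$) and using Proposition 3.2, we may assume $T^n \subseteq \Gamma_f$. Next, composing with a target automorphism carrying $f(0)$ to $0$ does not change $\Gamma_f$ or the spherical equivalence class, so we also assume $f(0)=0$. Writing $f = p/q$ in lowest terms with $p(0)=0$ and $q(0)=1$: since each $\gamma \in T^n$ fixes $0$, the corresponding $\psi_\gamma$ fixes $f(0)=0$ and is therefore unitary, call it $U_\gamma$. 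Then $(p\circ\gamma)/(q\circ\gamma) = (U_\gamma p)/q$. Cross-multiplying, using the lowest-terms hypothesis and normalization $q\circ\gamma(0)=q(0)=1$, one concludes $q \circ \gamma = q$ for all $\gamma \in T^n$. A $T^n$-invariant holomorphic polynomial depends only on the weight-zero monomial, so $q$ is constant, hence $q \equiv 1$. Thus $f = p$ is a polynomial satisfying $p \circ \gamma = U_\gamma \, p$ for every $\gamma \in T^n$.

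Now expand $p(z) = \sum_\alpha c_\alpha z^\alpha$ with $c_\alpha \in \mathbb{C}^N$. Equating coefficients in $p \circ \gamma = U_\gamma p$ gives
\[
U_\gamma \, c_\alpha = e^{i\langle \alpha,\theta\rangle} c_\alpha
\]
for each multi-index $\alpha$. So every nonzero $c_\alpha$ is an eigenvector of the unitary $U_\gamma$ with eigenvalue $e^{i\langle\alpha,\theta\rangle}$. Since the index set is finite, we can choose a single $\theta$ for which these exponentials are mutually distinct, forcing the family $\{c_\alpha : c_\alpha \ne 0\}$ to be pairwise orthogonal in $\mathbb{C}^N$. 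Applying a unitary $V \in {\bf U}(N)$ that sends each normalized $c_\alpha$ to a distinct standard basis vector, the composition $V \circ p$ has each of its nontrivial components equal to a single monomial $\|c_\alpha\| \, z^\alpha$. Hence $f$ is spherically equivalent to a monomial map.

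The delicate part, and where I expect the main technical bookkeeping, is step three: the passage from $T^n$-invariance of the rational equation to the conclusion $q \equiv 1$, which requires carefully invoking the lowest-terms normalization for a vector-valued numerator against a scalar denominator. The orthogonality step, which is the geometric heart of the argument, then follows cleanly from the observation that distinct monomials carry distinct $T^n$-weights.
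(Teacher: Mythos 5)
Your proof is correct, and it takes a genuinely different route from the paper's. The paper passes through the Hermitian form ${\mathcal H}(f) = \|P\|^2 - |q|^2$: it first shows (Lemmas 5.2--5.3) that torus-invariance forces ${\mathcal H}(f)$ to be a polynomial in $|z_1|^2,\dots,|z_n|^2$, then applies a decomposition lemma (Lemma 5.4) to write ${\mathcal H}(f) = \sum |h_i|^2 - |g|^2$ with $\{h_i, g\}$ linearly independent and with $\|h\|^2$ and $|g|^2$ each individually depending only on $|z_j|^2$, and finally invokes Lemma 5.1 to conclude $g$ is constant and the $h_i$ are monomials. You instead work directly with the equivariance identity $f\circ\gamma = \psi_\gamma\circ f$ at the level of coefficient vectors: since $\gamma$ fixes $0$ and $f(0)=0$, any valid $\psi_\gamma$ fixes $0$, hence by Cartan is a unitary $U_\gamma$; uniqueness of the reduced form with $q(0)=1$ then yields $q\circ\gamma = q$ and hence $q\equiv 1$; and the relation $U_\gamma c_\alpha = e^{i\langle\alpha,\theta\rangle}c_\alpha$ at a generic $\theta$ makes the coefficient vectors eigenvectors of one unitary with distinct eigenvalues, hence pairwise orthogonal. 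This orthogonality observation is an attractive and more explicit replacement for the paper's Lemma 5.4 machinery; what the paper's route buys is that Lemmas 5.1--5.4 are shared with Theorem 5.2 (the $\mathbf{U}(n)$-invariant case), where the weight argument would need to be replaced by a representation-theoretic one. Two small points worth making explicit in a written-up version: first, that the reduced-form uniqueness argument (which gives $q\circ\gamma=q$) relies on $\gamma$ and $U_\gamma$ both being invertible linear maps, so composing preserves the ``no common irreducible factor'' condition; and second, that the number of nonzero $c_\alpha$ is automatically at most $N$ once they are orthogonal, so the unitary $V$ exists.
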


\begin{proof} If $f$ is equivalent to a monomial map, then $||f||^2$ is invariant under the torus
${\bf U}(1) \oplus ... \oplus {\bf U}(1)$.
We will prove the converse assertion.  Assume ${\bf U}(1) \oplus ... \oplus 
{\bf U}(1)$ is
a subset of $\varphi^{-1} \circ \Gamma_f \circ \varphi$ for some $\varphi \in \mathrm{Aut}(\mathbb{B}^n)$.
Then, replacing $f$ by $f \circ \varphi$ if necessary,  
we can assume ${\bf U}(1) \oplus ... \oplus {\bf U}(1)$ is a subset of 
$\Gamma_f$. 

By composing $f$ with an automorphism in the target, we can assume $f(0)=0$.
Write $f=\frac{P}{q}$; as usual we assume $P(0)=0$ and $q(0)=1$ and that ${P \over q}$
is reduced to lowest terms. 
By Lemma 5.2,  $\mathcal{H}(f)$ is a polynomial in the variables $|z_1|^2,...,|z_n|^2$.
By Lemma 5.4, we have 
$$\mathcal{H}(f)=||P||^2-|q|^2=\sum_{i=1}^p |h_i(z)|^2-|g(z)|^2 = ||h(z)||^2 - |g(z)|^2. $$
Here $h_1, ...,h_p, g$ are linearly independent holomorphic polynomials.
Moreover, $|g(z)|^2=\Phi(|z_1|^2, ...,|z_n|^2)$ and $||h(z)||^2 = \Psi(|z_1|^2,...,|z_n|^2) $ for some real polynomials
$\Phi, \Psi$ in $\mathbb{R}^n$. 

Since $g(0) \neq 0$, Lemma 5.1 guarantees that $|g|^2$ is a nonzero 
constant $c^2$. We may assume $c>0$. 
We further write
$$||h(z)||^2 = 
\sum_{i=1}^p |h_i(z)|^2=\lambda_1^2 |z^{\alpha^1}|^2+ \cdots +\lambda_{k}^2 |z^{\alpha^k}|^2$$
for distinct multi-indices $\alpha_1,...,\alpha_k$ and positive constants $\lambda_1,...,\lambda_k$. By the linear independence of $h_1, \cdots, h_p, g,$
we have  $|\alpha_i| \geq 1$ for all $1 \leq i \leq k$. Hence $h(0)=0$.
Since 
$$||h(0)||^2 - |g(0)|^2 = ||p(0)||^2 -|q(0)|^2 = -1 ,$$ the constant $c$ must equal $1$.
Then $f$ is spherically equivalent to the monomial map
$(\lambda_1z^{\alpha_1}, ..., \lambda_k z^{\alpha_k}) = J_\lambda\left(z^{\alpha_1},..., z^{\alpha_k}\right)$.
\end{proof} 

The next result tells us when $f$ is an orthogonal sum of tensor products.
In this result, as in the previous theorem, after composition with a diagonal unitary map, we may 
assume that the $\lambda_j$ are positive.  

\begin{theorem} Assume $N \ge n\ge 1$. 
Let $f: \mathbb{B}^n \to \mathbb{B}^N$ be a rational holomorphic proper map with Hermitian invariant group
$\Gamma_f$. Then $\Gamma_f$ contains a maximal compact subgroup, that is, a conjugate of 
${\bf U}(n)$, if and only if $f$ is spherically equivalent to an orthogonal sum of tensor products:
$$ f(z) = J_{\lambda}(z^{\otimes m_1}, ...,z^{\otimes m_k})\oplus 0  = \lambda_1 z^{\otimes m_1} \oplus \cdots \oplus \lambda_k z^{\otimes m_k} \oplus 0.$$
Here $k \geq 1$ and $m_k> \cdots > m_2 > m_1 \ge 0$. 
\end{theorem}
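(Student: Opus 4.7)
\textit{Plan of proof.} I will prove both implications; the forward implication is routine, while the converse requires reading off a normal form from the constraints on $\mathcal{H}(f)$. For the forward direction, suppose $f$ is spherically equivalent to $g = J_\lambda(z^{\otimes m_1},\dots,z^{\otimes m_k})\oplus 0$. Then $\mathcal{H}(g) = \sum_i \lambda_i^2 \|z\|^{2m_i} - 1$ depends only on $\|z\|^2$, hence $\mathcal{H}(g\circ U) = \mathcal{H}(g)$ for every $U\in\mathbf{U}(n)$. Proposition 3.3 gives $\mathbf{U}(n)\subseteq \Gamma_g$, and Proposition 3.2 then implies that $\Gamma_f$ is conjugate to $\Gamma_g$, so contains a conjugate of $\mathbf{U}(n)$.

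For the converse, assume $\varphi^{-1}\mathbf{U}(n)\varphi \subseteq \Gamma_f$ for some $\varphi\in\mathrm{Aut}(\mathbb{B}^n)$. Replacing $f$ by $f\circ\varphi^{-1}$ (Proposition 3.2) and composing with a target automorphism, I may assume $\mathbf{U}(n)\subseteq \Gamma_f$ and $f(0)=0$. Write $f=P/q$ in lowest terms with $P(0)=0$ and $q(0)=1$. Lemma 5.3 with $m=l=0$, $k=n$ produces a real one-variable polynomial $G$ with $\|P\|^2 - |q|^2 = G(\|z\|^2) = \sum_m a_m\|z\|^{2m}$. Decomposing $P = \sum_{j\ge 1} P_j$ and $q = 1 + \sum_{j\ge 1} q_j$ into homogeneous parts and comparing bidegree $(i,j)$ components on both sides yields three consequences. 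First, the $(i,0)$ relation $\langle P_i, P_0\rangle = q_i$ combined with $P_0 = 0$ forces $q_j \equiv 0$ for all $j\ge 1$, so $q\equiv 1$ and $f = P$ is a polynomial. Second, the diagonal $(m,m)$ relation reads $\|P_m(z)\|^2 = a_m\|z\|^{2m}$, giving $a_m \ge 0$. Third, the off-diagonal relation for $i\neq j \ge 1$ reads $\langle P_i(z), P_j(z)\rangle\equiv 0$.

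Let $m_1<\cdots<m_k$ enumerate the exponents with $a_{m_i}>0$ and set $\lambda_i = \sqrt{a_{m_i}}$; the identity $\|P(z)\|^2 = 1$ on the unit sphere forces $\sum_i\lambda_i^2 = 1$. The equality $\|P_{m_i}(z)\|^2 = \|\lambda_i z^{\otimes m_i}\|^2$ implies that the Gram matrices of the coefficient vectors of $P_{m_i}$ and of $\lambda_i z^{\otimes m_i}$ coincide, so there is an isometry between their coefficient spans identifying $\lambda_i z^{\otimes m_i}$ with $P_{m_i}$. Reading off the coefficients of $z^\alpha\bar z^\beta$ in $\langle P_{m_i}(z), P_{m_j}(z)\rangle \equiv 0$ shows that every pair of coefficient vectors drawn from distinct $P_{m_i}$'s is orthogonal in $\mathbb{C}^N$; hence the coefficient spans of the various $P_{m_i}$ are pairwise orthogonal. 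The individual isometries therefore assemble, after extension, into a single unitary $U$ on $\mathbb{C}^N$ carrying $J_\lambda(z^{\otimes m_1},\dots,z^{\otimes m_k})\oplus 0$ onto $P = f$, establishing the spherical equivalence. I expect the main obstacle to be this final assembly step: recognizing each homogeneous piece $P_{m_i}$ as a target-unitary image of $\lambda_i z^{\otimes m_i}$ requires the squared-norm rigidity for homogeneous polynomial maps, and combining the pieces into one unitary rather than a separate unitary per block relies essentially on the vanishing of the mixed inner products $\langle P_{m_i}, P_{m_j}\rangle$.
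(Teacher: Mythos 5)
Your proof is correct, and the forward direction coincides with the paper's (it is Theorem 4.2). For the converse, your opening is the same — reduce to $\mathbf{U}(n)\subseteq\Gamma_f$, $f(0)=0$, $f=P/q$ normalized, and invoke the lemmas of Section 5 to get $\mathcal{H}(f)=\|P\|^2-|q|^2=G(\|z\|^2)$ — but from there you take a more hands-on route. You decompose $P$ and $q$ into homogeneous parts and read off bidegree-$(i,j)$ components of the identity: the $(i,0)$ components force $q\equiv 1$ (so $f$ is a polynomial), the $(m,m)$ components give $\|P_m\|^2=a_m\|z\|^{2m}$, and the off-diagonal components give $\langle P_i,P_j\rangle\equiv 0$; you then build the target unitary by hand from Gram-matrix rigidity for each homogeneous block plus the pairwise orthogonality of the coefficient spans. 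The paper instead applies Lemma 5.4 to write $\mathcal{H}(f)=\sum|h_i|^2-|g|^2$ with $\{h_i\},g$ linearly independent holomorphic polynomials whose squared sums are functions of $\|z\|^2$, uses Lemma 5.1 to conclude $g$ is constant (hence $|g|^2=1$), identifies $\sum|h_i|^2=\sum\lambda_j^2\|z\|^{2m_j}$, and then appeals to the principle (established in the proof of Proposition 3.3) that equality of Hermitian forms $\mathcal{H}$ forces target-unitary equivalence. The trade-off: your argument is more elementary and self-contained — it makes explicit that the denominator is constant and constructs the matching unitary directly — while the paper's version is shorter because it offloads the heavy lifting onto Lemmas 5.1 and 5.4 and the Hermitian-form rigidity it has already proved. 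One small presentational point: your conclusion produces exponents $m_1\ge 1$ (since $f(0)=0$ was arranged), which is a special case of the theorem's normal form with $m_1\ge 0$; this is fine, as the statement only asserts spherical equivalence to \emph{some} such form.
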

\begin{proof} The ``if'' implication follows from Theorem 4.2.
We will prove the converse. The proof is similar to
the proof of Theorem 5.1. Assume that ${\bf U}(n)$ is a subset of $\varphi^{-1} \circ \Gamma_f \circ \varphi$ 
for some $\varphi \in \mathrm{Aut}(\mathbb{B}^n)$.  Replacing $f$ by $f \circ \varphi$, we assume ${\bf U}(n)$ is a subset of $\Gamma_f$.  Again we may assume $f(0)=0$, that
$f=\frac{P}{q}$, the fraction is reduced to lowest terms,
$P(0)=0$, and $q(0)=1$.    By Lemma 5.2, 
$\mathcal{H}(f)$ is a polynomial function of $||z||^2$. By Lemma 5.4, we can write
$$\mathcal{H}(f)=||P||^2-|q|^2=\sum_{i=1}^p |h_i(z)|^2-|g(z)|^2, $$
where $h_1, ...,h_p, g$ are linearly independent holomorphic polynomials. 
Moreover, $|g|^2$ and $\sum_{i=1}^p |h_i(z)|^2 $ are also polynomials in $||z||^2$. Note that $g(0)\ne 0$. As 
above, by Lemma 5.1,  we conclude that $|g|^2$ is a nonzero constant $c^2$.
We then write
$$\sum_{i=1}^p |h_i(z)|^2=\lambda_1^2 ||z||^{2m_1}+ \cdots +\lambda_{k}^2 ||z||^{2m_k},$$
for  integers $m_1, \cdots, m_k$ and positive constants $\lambda_1,...,\lambda_k$. 
Put $\lambda = (\lambda_1,...,\lambda_k)$. 
As in the proof of Theorem 5.1, for $1 \leq j \leq k$ we have $m_j \ge 1$ and again $c=1$.
Then $f$ is spherically equivalent to 
$$ J_{\lambda}(z^{\otimes m_1}, ...,z^{\otimes m_k})\oplus 0$$
and Theorem 5.2 follows. 
\end{proof}

\section{Finite Hermitian-invariant groups}

Let $G$ be a finite subgroup of ${\rm Aut}({\mathbb B}^n)$. The main result of this section, Theorem 6.2,
constructs a rational proper map $f$ with $\Gamma_f = G$. In addition, if $G \leqslant {\bf U}(n)$, then $f$ may be chosen to be a polynomial. The proof relies on knowing a basis for the algebra of polynomials invariant under $G$.
In Theorem 6.3 we prove a slightly weaker version using an easier construction.

By Cayley's theorem each finite group $G$ is a subgroup
of the permutation group $S_n$ on $n$ letters for some $n$. We may represent $S_n$
as a subgroup of ${\bf U}(n)$ by fixing a coordinate system on ${\mathbb C}^n$ 
and considering the group of unitary maps that permute the coordinates.
We identify $G$ with a subgroup of ${\bf U}(n)$ in this way. In Theorem 6.3 we 
construct a polynomial proper map $f$ with $\Gamma_f = G$. We provide two
proofs of Proposition 6.1, the special case of Theorem 6.3 when $\Gamma_f$
is the symmetric group. In particular, we can find a polynomial map of degree $3$
whose group is $S_n$, but we note in Remark 6.1 that doing so is impossible for degree $2$.

The proofs of Theorem 6.2 and 6.3 use the results of the previous sections
and also the following result, which holds even when the target is a generalized ball.
By Corollary 3.3, when the target is the unit ball and $d > 1$, the constant
monomial must also occur in the conclusion of Theorem 6.1.

\begin{theorem}Let $f:{\mathbb B}^n \to {\mathbb B}^N_l$ be a proper polynomial map of degree $d$.
Assume that $\Gamma_f$ contains an automorphism that moves the origin.
Then there is a coordinate function $z_j$ such that each monomial $z_j^k$, for $1 \le k \le d$, 
appears in $f$. \end{theorem}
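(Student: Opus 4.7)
The plan is to exploit $f\circ\gamma = \psi\circ f$ for $\gamma\in\Gamma_f$ moving the origin, derive a polynomial identity of effectively one variable, and match binomial coefficients. Write $\gamma = U\phi_a$ with $a\neq 0$, and let $\psi\in\mathrm{Aut}(\mathbb{B}^N_l)$ be the target automorphism, with $\tilde c := \psi^{-1}(0)\in\mathbb{B}^N_l$. As a linear fractional transformation of the generalized ball whose inverse sends $0$ to $\tilde c$, the map $\psi$ has affine denominator $1 - \langle w,\tilde c\rangle_l$, so the denominator of $\psi\circ f$ as a rational function of $z$ is $1 - \langle f(z),\tilde c\rangle_l$, a polynomial of degree at most $d$. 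On the other side, the denominator of $f\circ\gamma$ is $(1-\langle z,a\rangle)^d$. Equating the two reduced rational forms---assuming the generic coprimality condition that $(1-\langle z,a\rangle)$ does not divide the numerator of $f\circ\gamma$---and normalizing at $z=0$ yields the key identity
\[
1 - \langle f(z),\tilde c\rangle_l \;=\; \mu\,(1-\langle z,a\rangle)^d, \qquad \mu = 1 - \langle f(0),\tilde c\rangle_l.
\]

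To see $\mu\neq 0$, apply the standard automorphism formula
\[
1 - \|\psi(w)\|_l^2 \;=\; \frac{(1-\|\tilde c\|_l^2)(1-\|w\|_l^2)}{|1-\langle w,\tilde c\rangle_l|^2}
\]
at $w=f(0)$, using $\psi(f(0)) = f(b)$ where $b=\gamma(0)$; one gets $|\mu|^2(1-\|f(b)\|_l^2) = (1-\|\tilde c\|_l^2)(1-\|f(0)\|_l^2)$. Since $\tilde c$, $f(0)$, and $f(b)$ all lie in the open generalized ball, every factor on the right is strictly positive, whence $\mu\neq 0$.

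Finally, because $a\neq 0$, choose $j$ with $a_j\neq 0$ and set $z = te_j$ in the key identity. Writing $f(te_j) = \sum_{k=0}^d t^k A_{ke_j}$ with $A_{ke_j}\in\mathbb{C}^N$ the coefficient of $z_j^k$ in $f$, the identity becomes
\[
\sum_{k=0}^d t^k \langle A_{ke_j},\tilde c\rangle_l \;=\; 1 - \mu\,(1-t\,\overline{a_j})^d.
\]
Expanding binomially on the right and matching the coefficient of $t^k$ for $1\le k\le d$ gives $\langle A_{ke_j},\tilde c\rangle_l = \mu(-1)^{k+1}\binom{d}{k}\overline{a_j}^{\,k}$, which is nonzero since $\mu\neq 0$ and $a_j\neq 0$. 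Hence $A_{ke_j}\neq 0$, so $z_j^k$ appears in $f$ for each $k=1,\ldots,d$.

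The main obstacle is justifying the coprimality hypothesis used to derive the key identity; a common factor $(1-\langle z,a\rangle)$ in the numerator of $f\circ\gamma$ would force the top-degree part $f_d$ to vanish on the hyperplane $(Ua)^\perp$. Handling this degenerate case likely requires additional analysis, such as iterating with powers of $\gamma$ to vary $a$ or extracting further consequences of the invariance relation for $f_d$.
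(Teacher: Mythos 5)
Your approach tracks the paper's proof closely: both hinge on equating the \emph{denominator} side of $f\circ\gamma=\psi\circ f$. The paper works in homogeneous coordinates, writing $\hat f=[P(z,s),s^d]$ with $\gcd(P,s^d)=1$; since $\hat f\circ\hat\gamma$ and $\hat\psi\circ\hat f$ are tuples of homogeneous degree-$d$ polynomials with unit gcd defining the same projective map, they are proportional by a nonzero constant, and the paper simply reads off the last component and locates the monomials $z_{i_0}^k s^{d-k}$ on its left-hand side. Your ``key identity'' is the affine-coordinate avatar of that last-component equation, and restricting to the line $z=te_j$ with $a_j\neq0$ is a perfectly good way to extract the same conclusion.

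The coprimality worry you flag at the end is not actually an obstacle, though you are right that it needs a sentence. Write the numerator of $f\circ\gamma$ as $P(z)=(1-\langle z,a\rangle)^d f(\gamma(z))$; reducing modulo $1-\langle z,a\rangle$, only the degree-$d$ terms of $f$ survive, so $(1-\langle z,a\rangle)\mid P$ would force the top homogeneous part $f_d$ to vanish on $H'=\{U(a-L_az):\langle z,a\rangle=1\}$. But $H'$ is an \emph{affine} hyperplane, a translate of $(Ua)^\perp$ rather than $(Ua)^\perp$ itself: one has $U(a-L_az)=0$ only for $z=L_a^{-1}(a)=a$, and $\langle a,a\rangle=\|a\|^2<1\neq1$, so $0\notin H'$. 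A homogeneous polynomial of positive degree vanishing on an affine hyperplane missing the origin vanishes on the cone over it, which is Zariski-dense, hence vanishes identically --- contradicting $\deg f=d$. Thus the reduced denominator of $f\circ\gamma$ really is $(1-\langle z,a\rangle)^d$ and your argument closes. The paper's homogeneous-coordinate formulation sidesteps this point entirely, because the unit-gcd normalization of $\hat f$ is automatically preserved under the invertible substitution $\hat\gamma$.
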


\begin{proof}
By assumption, there is some $\gamma \in \Gamma_f$ such that $\gamma \not \in {\bf U}(n)$ 
and there exists $\psi_{\gamma} \in \mathrm{Aut}(\mathbb{B}_l^N)$ such that
$$f \circ \gamma=\psi_{\gamma} \circ f.$$
We embed $\mathbb{B}^n$ as an open subset of $\mathbb{P}^n$ by  the map
$$z \rightarrow [z, 1].$$
Here $[z, s]$ are homogeneous coordinates of $\mathbb{P}^n.$ Recall 
that $\mathbb{B}_l^N$ is an open subset of $\mathbb{P}^N$. We regard 
$\gamma, 
\psi_{\gamma}$ as elements in ${\bf SU}(n, 1)$ and ${\bf SU}(m-l, l+1)$. 
Write
$$\gamma=U=(u_1, \cdots, u_{n+1})\in {\bf SU}(n,1),$$
where each $u_i$ is an $(n+1)-$dimensional column vector. 
Note $\gamma=U \in {\bf U}(n)$ if and only if 
$u_{n+1}=(0, \cdots, 0, 1)^t$. 
Write $u_{n+1}=(\lambda_1, \cdots, \lambda_{n+1})^t$. 
Since $U \in {\bf SU}(n, 1)$, we 
have $\sum_{i=1}^n |\lambda_i|^2 -|\lambda_{n+1}|^2=-1$.
Thus $\lambda_{n+1} \neq 0$. Since $\gamma \not \in {\bf U}(n)$ it follows
that $\lambda_{n+1} \neq 1$ and $\lambda_{i_0} \neq 0$ 
for some $i_0$ with $1 \leq i_0 \leq n$.
In  homogeneous coordinates, $f$ can be written as 
$  [P(z, s), s^d]$, 
where $P(z, s)=s^d f(\frac{z}{s})$. Moreover, $f \circ \gamma$ can be written as
$$[P((z, s) U), ((z, s)u_{n+1})^d].$$
Write $\psi_\gamma = V \in {\bf SU}(m-l,l+1)$. As above, $\psi_{\gamma} \circ f$ can be written as 
$$[P(z, s), s^d] V.$$
Since $f \circ \gamma=\psi_{\gamma} \circ f$, there is a nonzero constant $c$ such that
$$\left( P((z, s) U), ((z, s)u_{n+1})^d \right)=c \left( P(z, s), s^d \right) V. $$
Equating the last component in this  equation yields
$$ 
((z, s) u_{n+1})^d = c (P(z, s), s^d) v_{N+1}.
\eqno (31) $$ 
Recall that $u_{n+1}=(\lambda_1, \cdots, \lambda_{n+1})^t$. We thus have 
$$((z, s) u_{n+1})^d = \left( \cdots + \lambda_{i_0} z_{i_0}+ \cdots + \lambda_{n+1} s \right)^d. $$
Each term $z_{i_0}^k s^{d-k}$ for $ 1 \leq k \leq d $ appears on the left-hand 
side of (31) and hence such terms also appear on the right-hand 
side  of (31) and thus in $P(z, s)$. By the definition of 
$P(z, s)$, each term $z_{i_0}^k$ for $ 1 \leq k \leq d$ therefore appears in 
$f(z)$.
\end{proof}

\begin{corollary} Consider the Whitney map $W$ defined by 
$$ W(z_1,...,z_n) = (z_1,...,z_{n-1}, z_1z_n, ..., z_{n-1}z_n, z_n^2). $$
Then $\Gamma_W = {\bf U}(n-1) \oplus {\bf U}(1)$. 
In particular, the source rank satisfies ${\bf s}(W) = 2$. \end{corollary}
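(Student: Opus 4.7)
The plan is to identify $\Gamma_W$ by computing $\mathcal{H}(W)$ explicitly, then containing $\Gamma_W$ inside the unitary group via Corollary 3.3, and finally extracting the block structure from a simple polynomial identity.

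First, I would write $z = (z', z_n)$ with $z' = (z_1,\ldots,z_{n-1})$ and compute directly
$$\|W(z)\|^2 = \|z'\|^2 + \|z'\|^2|z_n|^2 + |z_n|^4.$$
Using $\|z'\|^2 = \|z\|^2 - |z_n|^2$ this simplifies, after a line of algebra, to
$$\mathcal{H}(W) = \|W(z)\|^2 - 1 = (\|z\|^2 - 1)(1 + |z_n|^2).$$
This form is manifestly invariant under any $U \in {\bf U}(n-1) \oplus {\bf U}(1)$ (it depends only on $\|z\|^2$ and $|z_n|^2$, both of which are preserved), so Proposition 3.3 together with Remark 3.1 gives ${\bf U}(n-1) \oplus {\bf U}(1) \subseteq \Gamma_W$.

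For the reverse containment, since $W$ is a polynomial proper map of degree $2$ with $W(0)=0$, Corollary 3.3 forces $\Gamma_W \subseteq {\bf U}(n)$. Let $U \in \Gamma_W$; by Remark 3.1 we have $\mathcal{H}(W \circ U) = \mathcal{H}(W)$, and since $U$ preserves $\|z\|^2$, the formula above gives
$$(\|z\|^2 - 1)(1 + |(Uz)_n|^2) = (\|z\|^2 - 1)(1 + |z_n|^2).$$
As an identity of polynomials in $(z,\bar z)$ this forces $|(Uz)_n|^2 = |z_n|^2$. Expanding $(Uz)_n = \sum_j U_{nj}z_j$ and matching coefficients of $z_j \bar z_k$ yields $U_{nj}\overline{U_{nk}} = \delta_{jn}\delta_{kn}$; in particular $|U_{nn}|=1$ and $U_{nj}=0$ for $j<n$. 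Unitarity of $U$ then forces $U_{in} = 0$ for $i<n$ as well, giving the block form required for $U \in {\bf U}(n-1) \oplus {\bf U}(1)$.

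For the source rank, a direct computation from (1) gives ${\bf S}({\bf U}(n-1)\oplus{\bf U}(1)) = n - (n-2) - 0 = 2$, so ${\bf s}(W) \le 2$. For the reverse inequality, suppose ${\bf s}(W) \le 1$; then some conjugate $\gamma^{-1} \Gamma_W \gamma$ must contain a subgroup of the form $\bigoplus {\bf U}(k_j)$ with ${\bf S} = 1$, and the only such subgroup is ${\bf U}(n)$ itself. This would mean $\Gamma_W$ contains a conjugate of ${\bf U}(n)$ in ${\rm Aut}({\mathbb B}^n)$, which is impossible by a dimension count: $\dim \Gamma_W = (n-1)^2 + 1 < n^2 = \dim {\bf U}(n)$ for $n \ge 2$. (One could equivalently invoke Theorem 5.2: such containment would make $W$ spherically equivalent to an orthogonal sum of tensor powers, but the presence of the mixed terms $z_iz_n$ without the missing $z_iz_j$ for $i,j < n$ precludes this.) The main technical point is the identity $|(Uz)_n|^2 = |z_n|^2$; once that is in hand the rest is routine.
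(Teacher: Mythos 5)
Your proof is correct, and it is in fact more complete than the paper's. The paper's argument is terse: it invokes Theorem 6.1 to conclude $\Gamma_W$ contains no origin-moving automorphism (since no $z_j$ has both $z_j$ and $z_j^2$ appearing in $W$), observes that the Hermitian norm is ``obviously'' invariant under ${\bf U}(n-1) \oplus {\bf U}(1)$, and deduces ${\bf s}(W) \ne 1$ from Theorem 5.2 — but it never explicitly establishes the reverse containment $\Gamma_W \subseteq {\bf U}(n-1) \oplus {\bf U}(1)$. You supply precisely that missing step: the clean factorization $\mathcal{H}(W) = (\|z\|^2 - 1)(1 + |z_n|^2)$ reduces the condition $\mathcal{H}(W \circ U) = \mathcal{H}(W)$ for $U \in {\bf U}(n)$ to $|(Uz)_n|^2 = |z_n|^2$, and your matrix-entry argument correctly extracts the block structure. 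You also route the containment $\Gamma_W \subseteq {\bf U}(n)$ through Corollary 3.3 (degree-$2$ polynomial fixing the origin) rather than Theorem 6.1; both are valid, and the paper's choice is motivated only by the corollary's placement as an application of Theorem 6.1. For ${\bf s}(W) \ne 1$ your dimension-count argument ($\dim({\bf U}(n-1) \oplus {\bf U}(1)) = (n-1)^2 + 1 < n^2$, so $\Gamma_W$ cannot contain a conjugate of ${\bf U}(n)$) is the cleanest version of what the paper gestures at via Theorem 5.2; your parenthetical alternative via Theorem 5.2 is a little loosely phrased but the intent is right. Implicitly both you and the paper are assuming $n \ge 2$.
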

\begin{proof} The theorem implies that $\Gamma_W$ contains no automorphism that moves the origin.
The Hermitian norm is obviously invariant under ${\bf U}(n-1) \oplus {\bf U}(1)$. 
Thus ${\bf s}(W) \le n-(n-2)=2$ and, by Theorem 5.2, ${\bf s}(W) \ne 1$.
\end{proof}

We next prove several lemmas used in the proofs of Theorem 6.2 and 6.3.

\begin{lemma}
Let $p(z):\mathbb{C}^n \to \mathbb{C}^K$  be a polynomial of degree $d$.
There is an $\epsilon > 0$ and a polynomial map $q$ such
that $\epsilon p \oplus q$ is  a proper map that is unitarily equivalent to an orthogonal sum of tensor products $J_{\lambda}(z^{\otimes m_1}, ..., z^{\otimes m_k})$. Here the $m_j$ are distinct and $0 \leq m_j \leq d$ for each $j$.
The target dimension of $q$ can be chosen to depend only on $n$ and $d$. Equivalently,
$$ \epsilon^2 ||p(z)||^2 + ||q(z)||^2 =  \sum_j |\lambda_j|^2 ||z||^{2m_j}. \eqno (32) $$
\end{lemma}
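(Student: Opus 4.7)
The plan is to construct $q$ so that, as real polynomials in $(z, \overline z)$,
\[
\epsilon^2 \|p(z)\|^2 + \|q(z)\|^2 \;=\; \sum_{j=0}^d c_j \|z\|^{2j}
\]
for some positive constants $c_0, \dots, c_d$ with $\sum c_j = 1$. Setting $\lambda_j = \sqrt{c_j}$ and $m_j = j$, the right-hand side is $\|J_\lambda(1, z, z^{\otimes 2}, \dots, z^{\otimes d})\|^2$. Two polynomial maps into complex Euclidean space with identical squared norms differ by a unitary post-composition: their coefficient vectors share a common Gram matrix, yielding an isometry between their spans that extends to a unitary on the ambient target. Thus, once the displayed identity holds, $\epsilon p \oplus q$ will be unitarily equivalent (after zero-padding to a common target dimension) to the claimed orthogonal sum of tensor products; properness follows because the squared norm evaluates to $\sum c_j = 1$ on the unit sphere.

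To produce the identity, write $p(z) = \sum_{|\alpha| \le d} a_\alpha z^\alpha$ with coefficient vectors $a_\alpha \in \mathbb{C}^K$. Then $\|p\|^2 = \sum_{\alpha, \beta} \langle a_\alpha, a_\beta\rangle\, z^\alpha \overline{z^\beta}$ has a positive semidefinite Gram matrix $M = [\langle a_\alpha, a_\beta\rangle]$ indexed by the monomials $\{z^\alpha : |\alpha| \le d\}$. Since
\[
\|z\|^{2m} \;=\; \sum_{|\alpha| = m} \binom{m}{\alpha}\, |z^\alpha|^2
\]
is diagonal in the same basis, the Gram matrix $D$ of $\sum_m c_m \|z\|^{2m}$ is diagonal with strictly positive entries $c_{|\alpha|}\binom{|\alpha|}{\alpha}$, hence positive definite on the span of monomials of degree at most $d$.

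For $\epsilon > 0$ sufficiently small the matrix $D - \epsilon^2 M$ is positive semidefinite, since $D$ has a strictly positive minimum eigenvalue while $\epsilon^2 M$ is arbitrarily small in operator norm. Factor $D - \epsilon^2 M = Q^* Q$ via a spectral decomposition (or Cholesky), and set $q_i(z) = \sum_\alpha Q_{i\alpha}\, z^\alpha$. Then $\|q\|^2 = \sum_m c_m \|z\|^{2m} - \epsilon^2\|p\|^2$, which is the identity we sought; the target dimension of $q$ is bounded by the number of multi-indices $|\alpha| \le d$ in $n$ variables, which depends only on $n$ and $d$.

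The only step requiring any real thought is the positive semidefiniteness of $D - \epsilon^2 M$, and this is immediate once one views $\|p\|^2$ and $\sum c_m \|z\|^{2m}$ as finite-dimensional Hermitian forms: the diagonal matrix $D$ is fixed and positive definite, while shrinking $\epsilon$ shrinks $\epsilon^2 M$ uniformly. No sum-of-squares theorem, rigidity result, or subtle dimension count is needed beyond the trivial monomial bound. If one prefers to omit the constant summand (e.g.\ when $p(0)=0$ and one wants all $m_j \ge 1$), the row and column of $M$ indexed by $\alpha = 0$ vanish, and one may set $c_0 = 0$; the same argument then proceeds on the orthogonal complement of the constants.
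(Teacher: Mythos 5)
Your proof is correct and follows essentially the same route as the paper's: take $r(z,\bar z) = \sum_{j=0}^d c_j \|z\|^{2j}$ with all $c_j > 0$, observe its underlying Hermitian form is positive definite, and subtract $\epsilon^2\|p\|^2$ for small $\epsilon$ to get a positive (semi)definite form, which is then the squared norm of some polynomial map $q$. You merely unpack the last step more explicitly via the Gram matrix and a $Q^*Q$ factorization, which the paper leaves implicit; your closing remark about dropping $c_0$ when $p(0)=0$ also matches the paper's note. No gap.
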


\begin{proof}
Consider the polynomial $r(z,{\overline z}) = \sum_{j=0}^d |\lambda_j|^2 ||z||^{2j}$, where $||\lambda||^2=1$
and each $\lambda_j \ne 0$. 
Its underlying Hermitian form is positive definite. Hence, for sufficiently small $\epsilon$,
the polynomial $ r(z,{\overline z}) - \epsilon^2 ||p(z)||^2$ also has a positive definite Hermitian form, and hence is a squared norm
$||q(z)||^2$. Note: when
$p$ omits all monomials of degree $b$, we can omit the term $||z||^{2b}$ from our definition of $r$,
obtain a positive semi-definite form, and draw the analogous conclusion. 
\end{proof}

\begin{lemma} Assume that $U \in {\bf U}(n)$ and
$ \big| \prod_{j=1}^n \left(1 + U(z_j)\right)\big|^2  = \big|\prod_{j=1}^n \left(1+ z_j\right)\big|^2$.
Then $U$ is a permutation of the coordinates.
\end{lemma}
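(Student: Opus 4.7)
The plan is to lift the squared-modulus identity to a polynomial identity and then invoke unique factorization. Write $L_j(z) = (Uz)_j = \sum_{k=1}^n u_{jk} z_k$, so the hypothesis becomes
\[
\Bigl|\prod_{j=1}^n \bigl(1+L_j(z)\bigr)\Bigr|^2 = \Bigl|\prod_{j=1}^n (1+z_j)\Bigr|^2 \quad \text{for all } z \in \mathbb{C}^n.
\]
Set $P(z) = \prod_j(1+z_j)$ and $Q(z) = \prod_j(1+L_j(z))$. Because $U$ is invertible, each row of $U$ is nonzero, so every $L_j$ is a nonzero linear form; in particular each factor $1 + L_j$ has degree exactly one in $z$.

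The first step is to upgrade the functional identity $|Q(z)|^2 = |P(z)|^2$ to an algebraic identity. Treating $z$ and $\bar z$ as independent sets of variables, $|Q|^2 - |P|^2$ becomes a polynomial $F(z,w) = Q(z)\overline{Q}(w) - P(z)\overline{P}(w)$ in the ring $R = \mathbb{C}[z_1,\dots,z_n,w_1,\dots,w_n]$, where $\overline{Q}$ denotes the polynomial whose coefficients are the conjugates of those of $Q$. The hypothesis says $F(z,\bar z)=0$ on $\mathbb{C}^n$, and because $\mathbb{C}^n \hookrightarrow \mathbb{C}^{2n}$ via $z \mapsto (z,\bar z)$ is a totally real embedding of real dimension $2n$, this forces $F \equiv 0$ in $R$. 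Hence $Q(z)\,\overline{Q}(w) = P(z)\,\overline{P}(w)$ as elements of the UFD $R$.

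Now I compare irreducible factorizations in $R$. On the right, $P(z) = \prod_j (1+z_j)$ splits into $n$ distinct, pairwise non-associate linear irreducibles depending only on the $z$-variables, while $\overline{P}(w) = \prod_j(1+w_j)$ splits into $n$ such irreducibles depending only on $w$; these $2n$ factors are pairwise non-associate. On the left, each $1+L_j(z)$ is a degree-one, hence irreducible, polynomial in the $z$-variables alone, and each $1+\overline{L}_j(w)$ is an irreducible polynomial in $w$ alone. Matching the $z$-only parts, unique factorization forces the multiset $\{1+L_j(z)\}_{j=1}^n$ to agree with $\{1+z_j\}_{j=1}^n$ up to unit scalars from $\mathbb{C}^\times$. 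Evaluating any factor at $z=0$ gives $1$ on both sides, so all unit scalars are $1$. Consequently there exists $\sigma \in S_n$ with $L_j(z) = z_{\sigma(j)}$ for every $j$, which says precisely that $U$ is the permutation matrix associated with $\sigma$.

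The main obstacle is the polarization step that converts the real-analytic identity into an honest equality in the polynomial ring $R$; once this is in hand, the rest reduces to matching degree-one irreducibles and normalizing at $z=0$, both of which are routine.
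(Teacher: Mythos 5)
Your proof is correct, and it takes a genuinely different route from the paper's. The paper argues that $\bigl|\prod_j(1+(Uz)_j)\bigr|^2 = \bigl|\prod_j(1+z_j)\bigr|^2$ forces $\prod_j(1+(Uz)_j) = e^{i\theta}\prod_j(1+z_j)$ (a holomorphic quotient of constant modulus must be constant), normalizes $e^{i\theta}=1$ at the origin, then equates coefficients to get $\sigma_k(Uz)=\sigma_k(z)$ for every elementary symmetric polynomial $\sigma_k$, and concludes that $U$ permutes coordinates. You instead polarize the squared-modulus identity to $Q(z)\overline{Q}(w)=P(z)\overline{P}(w)$ in $\mathbb{C}[z,w]$ and then invoke unique factorization: the $2n$ degree-one factors must match up to constants, the $z$-only factors can only pair with $z$-only factors, and the normalization at $z=0$ kills the constants, giving $L_j(z)=z_{\sigma(j)}$ directly. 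Your version buys you a completely algebraic argument that avoids the ``constant-modulus holomorphic ratio'' step and makes the final implication $\{\sigma_k(Uz)=\sigma_k(z)\ \forall k\}\Rightarrow\{U\text{ is a permutation}\}$ immediate, whereas the paper's phrasing of that last step is compressed and tacitly uses that equal elementary symmetric functions force equal multisets. One small economy worth noting: once you have $Q(z)\overline{Q}(w)=P(z)\overline{P}(w)$, setting $w=0$ already yields $Q(z)=P(z)$ (since $\overline{Q}(0)=\overline{P}(0)=1$), after which you can factor in $\mathbb{C}[z]$ alone rather than in the $2n$-variable ring; but the argument as you wrote it is sound.
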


\begin{proof} The hypothesis implies that 
$ \prod_{j=1}^n \left(1 + U(z_j)\right) = e^{i\theta} \prod_{j=1}^n \left(1+ z_j\right)$ for some $e^{i\theta}$.
Evaluating at $z=0$ forces $e^{i \theta} =1$. Hence
$$ \prod_{j=1}^n \left(1 + U(z_j)\right) = \prod_{j=1}^n \left(1+ z_j\right).  $$
Expand the products and equate coefficients. It follows for each symmetric polynomial
$\sigma_k$ that $\sigma_k(Uz)= \sigma_k(z)$. Since these symmetric
polynomials precisely generate the algebra of symmetric polynomials, it follows that
$U$ itself is a permutation. \end{proof}

We also have the following lemma when we assume less about $U$.

\begin{lemma} For $z \in {\mathbb C}^n$, define $p$ by $p(z)= (..., z_j z_k, ....)$, where $1 \le j < k \le n$.
Suppose that $U$ is unitary, and $||p(Uz)||^2 = ||p(z)||^2$.
Then there is a diagonal matrix $L$ and a permutation $\sigma$ of the coordinates such that $U=L \sigma$. 
\end{lemma}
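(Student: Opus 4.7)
\medskip

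\noindent\textbf{Proof plan for Lemma 5.6.}

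The plan is to reduce the hypothesis to a statement about the sum $\sum_j |z_j|^4$ and then to exploit the strict inequality between the $\ell^2$ and $\ell^4$ norms on vectors of nonnegative reals with prescribed $\ell^2$ mass.

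First I would expand the given Hermitian identity. Since
$$ \|p(z)\|^2 = \sum_{j<k} |z_j|^2 |z_k|^2 = \tfrac{1}{2}\Bigl( \|z\|^4 - \sum_{j=1}^n |z_j|^4 \Bigr), $$
the assumption $\|p(Uz)\|^2 = \|p(z)\|^2$, combined with the unitarity of $U$ (which gives $\|Uz\|^2 = \|z\|^2$), is equivalent to
$$ \sum_{j=1}^n |(Uz)_j|^4 = \sum_{j=1}^n |z_j|^4 \qquad \text{for every } z \in \mathbb{C}^n. $$

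Next I would specialize to the standard basis. Writing $U = (u_{ij})$ and taking $z = e_a$ gives $(Uz)_j = u_{ja}$, so for each $a$ we obtain
$$ \sum_{j=1}^n |u_{ja}|^4 = 1. $$
But unitarity of $U$ also gives $\sum_j |u_{ja}|^2 = 1$ (the $a$-th column has unit norm). Setting $x_j = |u_{ja}|^2 \ge 0$ we have $\sum_j x_j = 1$ and $\sum_j x_j^2 = 1$; since $\sum_j x_j^2 \le (\max_j x_j)(\sum_j x_j) = \max_j x_j \le 1$, equality forces $\max_j x_j = 1$, so exactly one $x_j$ equals $1$ and the others vanish. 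Therefore each column of $U$ has exactly one nonzero entry, of modulus one.

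Finally, having shown that $U$ is a unitary monomial matrix, I would conclude by a routine identification: define $\sigma$ by letting $\sigma(a)$ be the unique row in which column $a$ is nonzero, and let $\lambda_a = U_{\sigma(a),a}$; then $U_{ij} = \lambda_j \delta_{i,\sigma(j)}$. Setting $L = \mathrm{diag}(\lambda_{\sigma^{-1}(1)}, \ldots, \lambda_{\sigma^{-1}(n)})$ yields $U = L \sigma$, with $L$ diagonal unitary and $\sigma$ a coordinate permutation, as required. There is no real obstacle here; the one step that takes a moment is noticing that the $\ell^2$ hypothesis and the derived $\ell^4$ identity together force the column entries into the extreme case of the standard inequality.
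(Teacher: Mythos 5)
Your proof is correct, and it reaches the same structural conclusion (exactly one nonzero entry, of modulus one, per column of $U$) as the paper, but by a slicker route. The paper compares coefficients of $|z_l|^4$ on both sides of $\|p(Uz)\|^2 = \|p(z)\|^2$, first working out $n=2$ and $n=3$ explicitly and then asserting the general pattern $\sum_{j<k}|u_{jl}u_{kl}|^2 = 0$. You instead use the identity $\|p(z)\|^2 = \tfrac12\bigl(\|z\|^4 - \sum_j|z_j|^4\bigr)$ together with $\|Uz\| = \|z\|$ to reduce the hypothesis to the clean statement that $U$ preserves the quartic form $\sum_j|z_j|^4$, then specialize to standard basis vectors. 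Note that your conditions $\sum_j|u_{ja}|^4 = 1$ and $\sum_j|u_{ja}|^2 = 1$ are algebraically equivalent to the paper's $\sum_{j<k}|u_{ja}u_{ka}|^2 = 0$ (expand $\bigl(\sum_j|u_{ja}|^2\bigr)^2$), so the two arguments are extracting the same information; yours just packages it via the extremal case of $\sum x_j^2 \le \max_j x_j$ rather than by observing that a sum of nonnegative terms vanishes termwise. What your approach buys is a uniform, case-free treatment of all $n$ and a tidy conceptual reformulation (preservation of the $\ell^4$ diagonal term). One small point you gloss over: after showing each column has exactly one nonzero entry, you should note (as the paper also does only implicitly) that distinct columns cannot have their nonzero entries in the same row, which follows from orthogonality of the columns; this is what makes $\sigma$ a genuine permutation.
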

\begin{proof} First suppose $n=2$. Let $U$ be unitary. We are given
$$ p(Uz) = (u_{11}z_1 + u_{12}z_2)(u_{21}z_1 + u_{22}z_2) $$
and 
$$||p(Uz)||^2 = ||p(z)||^2 = |z_1z_2|^2. $$
The coefficients of $|z_1|^4$ and $|z_2|^4$ both vanish, and hence we get
$$ u_{11}u_{21} = 0  $$
$$ u_{12} u_{22} = 0. $$
If $u_{11} = 0$, then $U$ unitary implies $u_{21} \ne 0$ and $u_{12} \ne 0$. The second equation yields
$u_{22} = 0$. Let $L$ be the diagonal matrix with diagonal entries $u_{12}$ and $u_{21}$, 
and let $\sigma$ permute the coordinates. The conclusion holds.
If $u_{11} \ne 0$, then $u_{12}=0$ and hence $u_{22} \ne 0$.
Now the second equation gives $u_{21}=0$, 
and $U$ is diagonal. Thus the conclusion holds with $\sigma$ the identity.

Consider $n=3$. We are given
$ p(Uz) = (a_1(z), a_2(z), a_3(z)) $
where 
$$ a_1(z) = (u_{11}z_1 + u_{12}z_2 + u_{13}z_3)(u_{21}z_1 + u_{22}z_2 + u_{23}z_3) $$
$$ a_2(z) = (u_{11}z_1 + u_{12}z_2 + u_{13}z_3)(u_{31}z_1 + u_{32}z_2 + u_{33}z_3) $$
$$ a_3(z) = (u_{21}z_1 + u_{22}z_2 + u_{23}z_3)(u_{31}z_1 + u_{32}z_2 + u_{33}z_3). $$
The coefficient of each $|z_j|^4$ in $||p(z)||^2$ is $0$. The coefficients $c_l$ of $|z_l|^4$ in 
$||p(Uz)||^2$ are given by
$$ c_1 = |u_{11}u_{21}|^2 + |u_{11} u_{31}|^2 + |u_{21}u_{31}|^2 $$
$$ c_2 = |u_{12}u_{22}|^2 + |u_{12} u_{32}|^2 + |u_{22}u_{32}|^2 $$
$$ c_3 = |u_{13}u_{23}|^2 + |u_{13} u_{33}|^2 + |u_{23}u_{33}|^2. $$
Hence each of the nine terms separately vanishes. 

For general $n$ these equations become, for each $l$ with $1 \le l \le n$,
$$ \sum_{1 \le j < k \le n} |u_{jl} u_{kl}|^2 = 0. \eqno (33) $$
For each column of $U$, formula (33) implies that the product of any pair of distinct entries vanishes.
Hence there can be at most one non-zero element in each column. 
Since $U$ is invertible, there is exactly one non-zero element in each column. 
Since $U$ is unitary, each of these entries has modulus $1$. Let $L$ be the diagonal matrix with these entries
and $\sigma$ the appropriate permutation. The conclusion follows.
\end{proof}

We now prove what is perhaps the main result of this paper.

\begin{theorem} Let $G$ be a finite subgroup of ${\rm Aut}({\mathbb B}^n)$.
Then there is a rational proper map $f:{\mathbb B}^n \to {\mathbb B}^N$
for which $\Gamma_f = G$. When $G$ is a finite subgroup of ${\bf U}(n)$, 
we may choose $f$ to be a polynomial. \end{theorem}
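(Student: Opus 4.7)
The plan is to first reduce to the unitary case and then invoke Noether's finiteness theorem on the invariant ring.

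\emph{Reduction.} Since $G$ is finite it is a compact subgroup of $\mathrm{Aut}(\mathbb{B}^n)$, so (as in the proof of Corollary 3.2) it lies in a maximal compact subgroup of $\mathrm{Aut}(\mathbb{B}^n)$, which is a conjugate $\varphi^{-1}\mathbf{U}(n)\varphi$ of $\mathbf{U}(n)$ for some $\varphi \in \mathrm{Aut}(\mathbb{B}^n)$. Setting $G' := \varphi G\varphi^{-1} \leq \mathbf{U}(n)$ and applying Proposition~3.2, any polynomial proper map $\tilde f$ with $\Gamma_{\tilde f} = G'$ yields a rational proper map $f := \tilde f \circ \varphi$ with $\Gamma_f = G$. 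Hence it suffices to realize every finite $G \leq \mathbf{U}(n)$ as $\Gamma_f$ for some polynomial $f$.

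\emph{Construction.} By Noether's theorem the invariant algebra $\mathbb{C}[z_1,\dots,z_n]^G$ is generated by finitely many homogeneous polynomials $p_1,\dots,p_K$ of positive degrees $d_1,\dots,d_K$. Choose non-negative integers $a_1,\dots,a_K$ so that the totals $d_j + a_j$ are pairwise distinct, and form the polynomial map
\[ h \;:=\; (p_1\otimes z^{\otimes a_1})\,\oplus\,\cdots\,\oplus\,(p_K\otimes z^{\otimes a_K}). \]
Applying Lemma~6.1 to a small scalar multiple of $h$ produces a polynomial proper map $\epsilon h \oplus q$ with $f(0)=0$; I then juxtapose (Proposition~4.1) with a high tensor power $z^{\otimes m}$ to obtain the final polynomial proper map $f$ whose squared norm
\[ \|f\|^2 \;=\; \sum_{j=1}^K \epsilon^2 |p_j(z)|^2\,\|z\|^{2a_j} \;+\; \|q(z)\|^2 \;+\; \text{(tensor-power slots)} \]
separates into bihomogeneous pieces of distinct total degrees. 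Each $p_j$ is $G$-invariant, so the first sum is $G$-invariant, while the remaining pieces are $\mathbf{U}(n)$-invariant; hence $G \leq \Gamma_f$.

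\emph{Reverse containment.} Since $f(0) = 0$ and $\deg f \geq 2$, Corollary~3.3 gives $\Gamma_f \leq \mathbf{U}(n)$. For $\gamma \in \Gamma_f$, the identity $\|f\circ\gamma\|^2 = \|f\|^2$, combined with the distinct-total-degree construction and the large-$m$ juxtaposition (so that Proposition~4.1 isolates the invariant block), forces $|p_j(\gamma z)|^2 = |p_j(z)|^2$ for each $j$ and each $z$. \emph{This is the main obstacle:} $|p_j\circ\gamma|^2 = |p_j|^2$ only yields $p_j\circ\gamma = e^{i\theta_j} p_j$ for some phase $\theta_j$, which is strictly weaker than $p_j\circ\gamma = p_j$ (consider $\gamma = iI$ with $G = \{\pm I\}$). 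To eliminate the phases, I enlarge $f$ by further juxtaposition slots built from $G$-invariant combinations $p_i+p_j$ of generators of equal degree (equivalently, from Reynolds-averaged monomials $\sum_{\sigma\in G}(\sigma z)^\alpha$ across a sufficient set of multi-indices $\alpha$). Expanding $|p_i+p_j|^2$ and matching cross terms forces $\theta_i = \theta_j$ on each degree level; passing to products and using a low-degree generator pins down all phases to zero. Then $p_j\circ\gamma = p_j$ for every Noether generator, and since the invariants $(p_1,\dots,p_K)$ separate $G$-orbits on a Zariski-dense set of $\mathbb{B}^n$, we conclude $\gamma \in G$. Thus $\Gamma_f = G$, completing the proof.
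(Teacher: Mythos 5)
Your reduction to $G \leqslant \mathbf{U}(n)$, your appeal to Noether's finiteness theorem, and your use of Lemma~6.1 and degree separation to isolate the invariant blocks all match the paper's strategy. You also correctly identify the crux: $|p_j\circ\gamma|^2 = |p_j|^2$ only gives $p_j\circ\gamma = e^{i\theta_j}p_j$, not $p_j\circ\gamma = p_j$. However, your proposed repair does not close this gap. Taking $G=\{\pm I\}$ acting on $\mathbb{C}^n$, the invariant ring is generated by the degree-$2$ monomials $z_iz_j$, all of the same degree, and $\gamma=iI\notin G$ sends every generator $p$ to $-p$. Your scheme of adjoining slots $p_i+p_j$ for equal-degree generators only enforces $\theta_i=\theta_j$ within a degree level, and here they are already all equal (to $\pi$); ``passing to products'' yields $\theta_i+\theta_j = 2\pi \equiv 0$, again consistent. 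Nothing in your construction forces the common phase to vanish, so $\gamma=iI$ would survive into $\Gamma_f$ even though it is not in $G$.

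The paper resolves this with a small but essential device that your argument omits: it replaces each generator $h_i$ (with $h_i(0)=0$) by $1+h_i$ before tensoring, forming $p = \bigoplus_i (1+h_i)\otimes z^{\otimes m_i}$. The resulting squared norm involves $|1+h_i|^2 = 1 + h_i + \overline{h_i} + |h_i|^2$, and preservation of this quantity for unitary $\gamma$ forces preservation of the \emph{pure} term $h_i$, i.e.\ $h_i\circ\gamma = h_i$ exactly, with no phase freedom. In effect one is taking your cross-term idea but pairing each homogeneous generator with the constant $1$ rather than with another generator of the same degree; your insistence that the paired generators be of equal (hence positive) degree is precisely what loses the needed information. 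Replacing your $p_j\otimes z^{\otimes a_j}$ slots by $(1+p_j)\otimes z^{\otimes a_j}$ slots would repair the proof and bring it in line with the paper's.
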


\begin{proof} We first assume $G$ is a subset of ${\bf U}(n)$ 
and construct the polynomial map $f$.
Let ${\mathcal A}$ denote the algebra of $G$-invariant polynomials.
By Noether's theorem (see [Sm]), ${\mathcal A}$ is finitely generated.
Let $\{1,h_1,...,h_K\}$ denote a basis for ${\mathcal A}$; assume $h_i(0)=0$ for each $i$.
Put $h= (h_1,...,h_K)$; thus $h:{\mathbb C}^n \to {\mathbb C}^K$. Then $h$
is precisely $G$-invariant; that is, for $\gamma \in {\bf U}(n)$, we have $h \circ \gamma = h$ 
if and only if $\gamma \in G$.

After tensoring each summand with an appropriate tensor power $z^{\otimes m_j}$ we consider the map
$$  p =  \big((1+ h_1) \otimes z^{\otimes m_1}\big) \oplus\big((1+ h_2) \otimes z^{\otimes m_2}\big) \oplus ... \oplus  \big( (1+ h_K) \otimes z^{\otimes m_K}\big). $$
The $m_j$ are chosen in order to guarantee that all the monomials in each summand are distinct from
those in the other summands. By also assuming each $m_j \ge 1$ we obtain $p(0)=0$.
By Lemma 6.1, for some $\epsilon > 0$, we can find a polynomial $q$ such 
$ \epsilon p \oplus q$ is a proper polynomial map such that (32) holds.
Put $f = \epsilon p \oplus (q \otimes z^{\otimes m})$, where $m \ge {\rm deg}(p) +1$.
By Corollary 3.3, $\Gamma_f \subseteq {\bf U}(n)$.

{\bf Claim}: $\Gamma_f = G$.

Let $\gamma \in {\bf U}(n)$. Then (32) implies  $||p \circ \gamma||^2 = ||p||^2$ if and only if
$ ||q \circ \gamma||^2 = ||q||^2$. Hence, if $\gamma \in G$, then $\gamma$ preserves both $||p||^2$ and $||q||^2$.
Therefore $ G \subseteq \Gamma_f$. To prove the opposite inclusion, let $\gamma \in \Gamma_f \subseteq {\bf U}(n)$.
Then $||f \circ \gamma||^2 = ||f||^2$. Since $\gamma$ preserves the degrees of polynomials, we have
$ ||p \circ \gamma||^2 = ||p||^2$. Therefore, $\gamma$ preserves 
$$||(1+h_i) \otimes z^{\otimes m_i}||^2 = |1+h_i|^2 \ ||z||^{2m_i}$$ 
for each $i$. Since $\gamma$ is unitary, it preserves $||z||^2$ and thus also preserves 
$$|1+h_i|^2 = 1 + h_i + {\overline {h_i}} + |h_i|^2. $$
Hence $\gamma$ must preserve each pure term $h_i$, and therefore $\gamma \in G$, establishing the claim and proving
the theorem when $G \subseteq {\bf U}(n)$.

Next we assume $G$ is an arbitrary finite subgroup of ${\rm Aut}({\mathbb B}^n)$. By Lie group theory (see [HT]),
$G$ is contained in a conjugate of ${\bf U}(n)$. For some $\chi \in {\rm Aut}({\mathbb B}^n)$, we thus  have
$G_0 = \chi \circ G \circ \chi^{-1} \subseteq {\bf U}(n)$. By the result proved above for ${\bf U}(n)$,
there is a polynomial proper map $f$ for which $\Gamma_f = G_0$. By Proposition 3.2,
$\Gamma_g = G$ when $g = f \circ \chi$.
\end{proof}

In Theorem 6.3 we prove a slightly weaker statement but with a more constructive proof.
In this version, we assume that the group
is represented as a subgroup of the unitary group as indicated. At the end of the section
we compare the two proofs. We also provide two proofs of the following related proposition.

\begin{proposition} For each $n$, there is a polynomial proper map $f:{\mathbb B}^n \to {\mathbb B}^N$ 
for which $\Gamma_f$ is the symmetric group $S_n$. \end{proposition}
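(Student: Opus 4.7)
The plan is to build an explicit polynomial proper map whose Hermitian norm encodes exactly the $S_n$-action by coordinate permutations. Lemmas 6.2 and 6.3 are tailored to produce precisely this rigidity, and I will outline two proofs, one via each lemma. Both begin by fixing an $S_n$-invariant building block and then invoke Lemma 6.1 to fold it into a proper polynomial map.

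First approach (via Lemma 6.2): Take the single $S_n$-invariant polynomial $h(z) = \prod_{j=1}^n (1+z_j)$ and set $p(z) = h(z)\, z^{\otimes m}$ for some $m \ge 1$. By Lemma 6.1 one finds $\epsilon > 0$ and a polynomial $q$ such that $\epsilon^2 \|p\|^2 + \|q\|^2 = \sum_j |\lambda_j|^2 \|z\|^{2m_j}$, a polynomial in $\|z\|^2$. Put
$$ f \;=\; \epsilon p \,\oplus\, (q \otimes z^{\otimes M}), $$
with $M$ larger than $\deg(p)$ so that the two summands have disjoint total degree ranges. Then $f$ is a polynomial proper map with $f(0)=0$ and $\deg f \ge 2$, so Corollary 3.3 gives $\Gamma_f \subseteq \mathbf{U}(n)$. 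For any $\gamma \in \Gamma_f$, the equation $\|f \circ \gamma\|^2 = \|f\|^2$ splits by degree into $\|p \circ \gamma\|^2 = \|p\|^2$ and an equation for the $q$-block. The first reads $|h(\gamma z)|^2 \|z\|^{2m} = |h(z)|^2 \|z\|^{2m}$, whose cancellation is exactly the hypothesis of Lemma 6.2, giving $\gamma \in S_n$. Conversely, any permutation preserves $|h|^2$ and $\|z\|^2$, hence via the Lemma 6.1 identity also preserves $\|q\|^2$, so $S_n \subseteq \Gamma_f$.

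Second approach (via Lemma 6.3): Use the block
$$ P(z) \;=\; \bigl((z_j z_k)_{1\le j<k\le n}\bigr) \;\oplus\; \bigl((1+\sigma_1(z))\, z^{\otimes r}\bigr), $$
where $\sigma_1(z) = z_1 + \cdots + z_n$ and $r$ is chosen so the two blocks have disjoint degree ranges. Apply Lemma 6.1 to adjoin a correction $q$ making $f = \epsilon P \oplus (q \otimes z^{\otimes M})$ a proper polynomial map; as before $\Gamma_f \subseteq \mathbf{U}(n)$. For $\gamma \in \Gamma_f$ the degree separation yields $\|(z_jz_k)\circ\gamma\|^2 = \|(z_jz_k)\|^2$, so Lemma 6.3 forces $\gamma = L\sigma$ with $L$ diagonal unitary and $\sigma \in S_n$. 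Then $|1+\sigma_1(\gamma z)|^2 = |1+\sigma_1(z)|^2$, and equating the pure holomorphic terms gives $\sum_j \lambda_j z_{\sigma(j)} = \sum_j z_j$, whence each $\lambda_j = 1$ and $\gamma = \sigma \in S_n$. The reverse inclusion is immediate.

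The main obstacle in either argument is guaranteeing that the correction polynomial $q$ supplied by Lemma 6.1 introduces no spurious symmetries and no unwanted restrictions. Two mechanisms together resolve this: tensoring $q$ with a high power $z^{\otimes M}$ separates the degree ranges so that $\|f\circ\gamma\|^2 = \|f\|^2$ decouples into independent equations on the $P$-part and the $q$-part; and the identity from Lemma 6.1 ensures $\|q\|^2$ is a polynomial in $\|z\|^2$, hence automatically invariant under all of $\mathbf{U}(n)$, in particular under $S_n$. With this decoupling in place, the identification $\Gamma_f = S_n$ is localized to the invariance of the designed block, which is precisely where Lemmas 6.2 and 6.3 bite.
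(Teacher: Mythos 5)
Your first approach is, in all essentials, the paper's second proof of Proposition 6.1: both start from the scalar $S_n$-invariant polynomial $\prod_{j=1}^n(1+z_j)$, fold it into a proper polynomial map via Lemma 6.1, tensor with high powers of $z$ to decouple degrees, and then invoke the lemma stating that the only unitaries preserving $\big|\prod_j(1+z_j)\big|^2$ are coordinate permutations. (Incidentally, in the paper's printed second proof, this step is attributed to Lemma 6.3, which is evidently a typographical slip for Lemma 6.2; your citation of Lemma 6.2 is the correct one.) The difference is packaging: the paper tensors after constructing $\epsilon g\oplus h$ and nominally invokes Proposition 4.1 for the group intersection, whereas you tensor from the start and argue the degree decoupling directly in the norm identity, which is cleaner since the hypotheses of Proposition 4.1 do not literally apply to non-proper summands like $g$ and $h$.

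Your second approach is close in spirit to the paper's first proof, which also uses the quadratic block $(z_jz_k)_{j<k}$ together with $\sigma_1=\sum z_j$ and Lemma 6.3. The paper proceeds in two explicit stages, first showing the quadratic block alone pins $\Gamma$ down to $\big(\mathbf{U}(1)^n\big)\rtimes S_n$, then juxtaposing with the $\sigma_1$-piece and applying Proposition 4.1 to cut out the torus factor. You instead combine both blocks at once and run a single degree-decoupling argument, then split the remaining constraint into the Lemma 6.3 step followed by equating linear terms. The conclusions agree; the paper's staged version yields a somewhat lower-degree explicit map (degree $3$), which is its main advantage over the degree-$n+1$ (or higher) map from the $\prod(1+z_j)$ construction, and indeed the paper comments on this tradeoff after Theorem 6.3.

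There is one genuine error of statement in your closing paragraph: you assert that the Lemma 6.1 identity ``ensures $\|q\|^2$ is a polynomial in $\|z\|^2$, hence automatically invariant under all of $\mathbf{U}(n)$.'' This is false. Lemma 6.1 says $\epsilon^2\|p\|^2+\|q\|^2$ equals a polynomial in $\|z\|^2$; it says nothing about $\|q\|^2$ alone, which is generally not a function of $\|z\|^2$ and is not $\mathbf{U}(n)$-invariant. Fortunately the correct reasoning appears earlier in your own write-up: any permutation preserves $\|p\|^2$ (since $p$ is built from $S_n$-invariants) and preserves $\|z\|^2$, so from the Lemma 6.1 identity it preserves $\|q\|^2$ as well, giving $S_n\subseteq\Gamma_f$. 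You should excise or correct the erroneous sentence; the argument itself does not depend on it.
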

\begin{proof} Corollary 6.2 gives an example where $\Gamma_f$ is trivial when $n=1$.
In the first proof we assume $n\ge 2$. Begin with
the map $z \mapsto p(z) = (..., z_jz_k, ...)$ whose components are all the quadratic monomials
$z_jz_k$ with $1 \le j < k \le n$. As in the proof of Lemma 6.1, we can find a quadratic polynomial map
$$ \xi(z)=(z_1^2, \cdots, z_n^2),$$
such that
$$ 2 ||p(z)||^2 + ||\xi(z)||^2 = ||z||^4, $$
and hence $\sqrt{2} p \oplus \xi$ is a polynomial proper map. For any $\gamma \in U(n)$, the term $||z||^4$ is invariant under $\gamma$. It follows
that  $||p \circ \gamma||^2=||p||^2$ if and only if $||\xi \circ \gamma||^2=||\xi||^2.$  We put
$g = (\sqrt{2}p \otimes z) \oplus \xi $. Then $g$ is a polynomial proper map.
We next compute the Hermitian invariant group $\Gamma_g$.

{\bf Claim}: $\Gamma_g=\left( {\bf U}(1) \oplus ... \oplus {\bf U}(1) \right)  \times S_n.$

By Theorem 6.1, $\Gamma_g \subseteq {\bf U}(n)$. If $\gamma \in {\bf U}(n)$
and $||p \circ \gamma||^2 = ||p||^2$, then also $||\xi \circ \gamma||^2 = ||\xi||^2$;
thus $\gamma \in \Gamma_g$. Since $p$ is a monomial map
and $||p||^2$ is symmetric in the variables, $\left( {\bf U}(1) \oplus ... \oplus {\bf U}(1) \right)  \times S_n \subseteq \Gamma_g$. To verify the claim we must prove the opposite inclusion:
$$\Gamma_g \subseteq \left( {\bf U}(1) \oplus ... \oplus {\bf U}(1) \right)  \times S_n. $$

Let $\gamma \in \Gamma_g \subseteq {\bf U}(n)$. By Remark 3.1, $||g \circ \gamma||^2=||g||^2$. 
Since $\gamma$ preserves the degree of a polynomial, we have $||p \circ \gamma||^2 = ||p||^2$. 
Lemma 6.3 then implies that $\gamma = L\sigma$ for a diagonal unitary $L$ and permutation $\sigma$.
Therefore we have $\gamma \in \left( {\bf U}(1) \oplus ... \oplus {\bf U}(1) \right)  \times S_n$. 
Thus $\Gamma_g \subseteq \left( {\bf U}(1) \oplus ... \oplus {\bf U}(1) \right)  \times S_n \subseteq \Gamma_g$ and hence the claim holds.

Next we put $\alpha(z) =1 + \sum_{i=1}^n z_i$. By Lemma 6.1, there is an $\epsilon >0$ and 
a polynomial $\beta(z)$ of first degree 
such that $\epsilon \alpha \oplus \beta$ is an orthogonal sum of tensor products. 
Let $h$ be the polynomial proper map defined by $h=\epsilon \alpha \oplus (\beta \otimes z)$.
We use these maps to define a polynomial proper map $f$ as follows:

$$ f = J_{\frac{\pi}{4}}(g, h) = \frac{\sqrt{2}}{2}\big( \sqrt{2}( p \otimes z) \oplus \xi \oplus (\epsilon \alpha) \oplus (\beta \otimes z)\big). $$

We claim that $\Gamma_f=S_n$. By construction, $S_n \subseteq \Gamma_f$. 
We must prove that $\Gamma_f \subseteq S_n$. Note that $f$ is degree $3$ but contains no term $z_j^3$.
By Theorem 6.1, $\Gamma_f \subseteq {\bf U}(n)$. By Proposition 4.1 and the first claim, 
$$\Gamma_f=\Gamma_{g} \cap \Gamma_{h} \subseteq \left( {\bf U}(1) \oplus ... \oplus {\bf U}(1) \right)  \times S_n.$$ 
Let $\gamma \in \Gamma_f$. Then $||f \circ \gamma||^2=||f||^2$. Since $\gamma$ preserves degrees, the holomorphic linear terms are preserved. These terms arise only in $|\alpha|^2$.
Thus $\gamma$ must preserve $\sum_{i=1}^n z_i$, and therefore $\gamma \in S_n$.
\end{proof}

We next give a second proof of Proposition 6.1. It is easier to understand but the resulting map
is of higher degree. This proof also works when $n=1$.

\medskip

{\bf Second proof of Proposition 6.1}. \begin{proof} We first put $g(z) = \prod_{j=1}^n (1+z_j)$.
For small $\epsilon$, Lemma 6.1 implies there is a vector-valued polynomial map $h$ of degree $n$ such that
$$ |\epsilon g(z)|^2 + ||h(z)||^2 = \sum_{j=0}^n \lambda_j ||z||^{2j} \eqno (*)$$
and $\epsilon g \oplus h$ is a proper map. 
Now replace $g$ by $G=g \otimes z$ and $h$ by $H=h \otimes z^m$ where $m\ge n+2$ to get a proper polynomial map
$p$.
Proposition 4.1 implies that $\Gamma_p = \Gamma_G \cap \Gamma_H$. By Corollary 3.3, we have
$\Gamma_p \subseteq {\bf U}(n)$. Let $U \in \Gamma_f$. Since $U$ preserves degrees,
$|g \circ U|^2 = |g|^2$. By Lemma 6.3, the only unitary maps $U$ for which $|g \circ U|^2 = |g|^2$
are permutations. Hence the same holds for $G$. Therefore $\Gamma_p \subseteq S_n$.
We have $||p \circ \sigma||^2 = ||p||^2$ for each permutation $\sigma$
and therefore $S_n \subseteq \Gamma_p$.  \end{proof}

\begin{corollary} For $z \in {\mathbb C}$, put $ p(z) = {1 \over 2} \ \left(z+z^2, z^2 - z^3 \right)$.
Then $p:{\mathbb B}^1 \to {\mathbb B}^2$ is proper and $\Gamma_p$ is trivial. \end{corollary}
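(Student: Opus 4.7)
The plan is to proceed in two short stages: first verify that $p$ really is proper, and then use Corollary 3.3 together with Remark 3.1 to reduce the computation of $\Gamma_p$ to inspection of a single real-valued polynomial.

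For the first stage, I would simply compute
\[
4\|p(z)\|^2 = |z+z^2|^2 + |z^2-z^3|^2 = |z|^2\,|1+z|^2 + |z|^4\,|1-z|^2,
\]
and observe that on $|z|=1$ this equals $|1+z|^2 + |1-z|^2 = 4$. Thus $p$ maps $\mathbb{S}^1$ to $\mathbb{S}^3$, so $p:\mathbb{B}^1\to\mathbb{B}^2$ is a proper polynomial map with $p(0)=0$ and degree $3$.

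For the second stage, since $p$ is polynomial of degree $3$ with $p(0)=0$, Corollary 3.3 gives $\Gamma_p \subseteq \mathbf{U}(1)$. Any element of $\mathbf{U}(1)$ has the form $\gamma_\theta(z)=e^{i\theta}z$, and by Remark 3.1 such a $\gamma_\theta$ lies in $\Gamma_p$ if and only if $\mathcal{H}(p\circ\gamma_\theta)=\mathcal{H}(p)$, that is, $\|p(e^{i\theta}z)\|^2 = \|p(z)\|^2$ as polynomials in $z,\overline{z}$. Expanding the formula above I get
\[
4\|p(z)\|^2 = |z|^2 + 2|z|^4 + |z|^6 + \bigl(|z|^2 - |z|^4\bigr)\bigl(z + \overline{z}\bigr),
\]
while the same expression with $z$ replaced by $e^{i\theta}z$ yields
\[
4\|p(e^{i\theta}z)\|^2 = |z|^2 + 2|z|^4 + |z|^6 + \bigl(|z|^2 - |z|^4\bigr)\bigl(e^{i\theta}z + e^{-i\theta}\overline{z}\bigr).
\]

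Equating these two expressions and comparing the coefficient of the monomial $z\,|z|^2$ (or equivalently $z^2\overline{z}$) forces $e^{i\theta}=1$, hence $\theta=0$. Therefore $\Gamma_p$ is trivial. The only place one could get tripped up is checking that the coefficient computation actually isolates $e^{i\theta}$ without cancellation, which is the point of having the distinct bidegrees $(1,0)$ and $(2,1)$ in $(z+z^2)$ and not letting the linear and quadratic pieces conspire; the explicit form above makes this transparent.
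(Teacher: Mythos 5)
Your proof is correct and follows essentially the same route as the paper: reduce to $\mathbf{U}(1)$ via Corollary 3.3, then use Proposition 3.3 / Remark 3.1 to test whether $\|p(e^{i\theta}z)\|^2 = \|p(z)\|^2$. Your version simply spells out the squared-norm expansion $4\|p(z)\|^2 = |z|^2 + 2|z|^4 + |z|^6 + (|z|^2-|z|^4)(z+\overline z)$ and isolates the $z^2\overline z$ coefficient, which the paper leaves implicit; you also explicitly verify propriety, which the paper omits.
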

\begin{proof} The map $p$ is suggested by the proof.
We check the result directly. By Corollary 3.3, we have $\Gamma_p \subseteq {\bf U}(1)$.
Hence, if $\gamma \in \Gamma_f$, then $(p \circ \gamma) (z) = p(e^{i\theta} z)$. 
Using Proposition 3.3 and Remark 3.1 we see that
$$ |e^{i\theta} z+( e^{i\theta} z)^2|^2 + |(e^{i\theta}z)^2- (e^{i\theta}z)^3|^2 $$
must be independent of $e^{i\theta}$, which can happen only if $e^{i\theta}=1$.
\end{proof}

\begin{theorem} Let $G$ be a finite group, expressed as a subgroup
of the symmetric group $S_n$, which is represented as a subgroup
of ${\bf U}(n)$.  Then there is a polynomial proper holomorphic map $f:{\mathbb B}^n \to {\mathbb B}^N$
with $\Gamma_f = G$.
\end{theorem}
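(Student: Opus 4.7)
The strategy is to extend the first proof of Proposition 6.1 by adjoining one additional polynomial component whose role is to cut the permutation symmetry from $S_n$ down to $G$.

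I would first reuse the two polynomial proper maps $g$ and $h$ from the first proof of Proposition 6.1. Recall that $g = (\sqrt{2}\, p \otimes z) \oplus \xi$, built from $p(z) = (z_jz_k)_{j<k}$ and $\xi(z) = (z_1^2,\ldots,z_n^2)$, has $\Gamma_g = ({\bf U}(1) \oplus \cdots \oplus {\bf U}(1)) \rtimes S_n$, the group of unitary monomial matrices; and $h = \epsilon\alpha \oplus (\beta \otimes z)$, built from $\alpha = 1 + \sum_i z_i$, satisfies $\Gamma_g \cap \Gamma_h = S_n$ because matching the pure holomorphic linear terms of $|\alpha|^2$ forces the diagonal phases of any monomial matrix in $\Gamma_h$ to be $1$.

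Next, I would construct a $G$-distinguishing ingredient. Fix the multi-index $\beta = (1,2,\ldots,n)$ so that the monomial $\mu(z) = z^\beta = z_1 z_2^2 \cdots z_n^n$ has trivial stabilizer in $S_n$, and set
$$r_G(z) \;=\; \sum_{\sigma \in G} \mu(\sigma z).$$
This is a sum of $|G|$ pairwise distinct monomials, each with coefficient $1$. It is manifestly $G$-invariant, and a direct comparison of monomials shows that for any $\tau \in S_n$, the equality $r_G \circ \tau = c\, r_G$ forces $c = 1$ and $G\tau = G$; thus the $S_n$-stabilizer of $r_G$ is exactly $G$. Applying Lemma 6.1 to the scalar polynomial $1 + r_G$ produces $\epsilon > 0$ and a polynomial $b_G$ for which $\epsilon(1+r_G) \oplus b_G$ is an orthogonal sum of tensor powers, hence proper; I then set $h_G = \epsilon(1+r_G) \oplus (b_G \otimes z^{\otimes m_0})$ with $m_0 > \deg(1+r_G)$, which is a polynomial proper map.

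Finally, I would form $f$ by iterated juxtaposition: first $F_1 = J_{\theta_1}(g,\, h \otimes z^{\otimes m_1})$ with $m_1 > \deg g$, and then $f = J_{\theta_2}(F_1,\, h_G \otimes z^{\otimes m_2})$ with $m_2 > \deg F_1$. Since $g(0) = 0$ and $\deg g \ge 2$, and similarly $F_1(0) = 0$ with $\deg F_1 \ge 2$, Corollary 4.2 applies twice to yield $\Gamma_f = \Gamma_g \cap \Gamma_h \cap \Gamma_{h_G}$ inside ${\bf U}(n)$ (using also Corollary 3.3). The first two factors have already pinned $\gamma$ to be an honest permutation $\sigma \in S_n$, and $G$-invariance of $r_G$ gives $G \subseteq \Gamma_f$. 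The crux, and the step I expect to need the most care, is the converse: showing that $\sigma \in S_n \cap \Gamma_{h_G}$ forces $\sigma \in G$. This would follow by extracting the purely holomorphic terms of degree $|\beta| = n(n+1)/2$ from the identity $\|h_G \circ \sigma\|^2 = \|h_G\|^2$; since $\|b_G \otimes z^{\otimes m_0}\|^2 = \|b_G\|^2 \|z\|^{2m_0}$ contains only bidegree-matched terms and contributes no pure holomorphic part, the identity collapses to $\epsilon^2(r_G \circ \sigma) = \epsilon^2 r_G$. Hence $r_G \circ \sigma = r_G$, and the stabilizer property above forces $\sigma \in G$. This yields $\Gamma_f = G$, as required.
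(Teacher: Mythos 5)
Your proposal is correct and takes essentially the same approach as the paper: reduce to the $S_n$ case via Proposition 6.1, build a $G$-invariant polynomial $\sum_{\sigma\in G} z_{\sigma(1)}^{\mu_1}\cdots z_{\sigma(n)}^{\mu_n}$ whose $S_n$-stabilizer is exactly $G$ (you fix $\mu_i=i$; the paper allows arbitrary distinct positive exponents), pad it to a proper map via Lemma 6.1, juxtapose using a large tensor power to decouple degrees, and extract pure-holomorphic terms to force the stabilizer condition. The only cosmetic difference is that you carry out the juxtaposition in two iterated steps rather than first invoking Proposition 6.1's map $f$ with $\Gamma_f = S_n$ and then juxtaposing once.
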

\begin{proof} Proposition 6.1 establishes the result when $G=S_n$. 
We next let $G \leqslant S_n$ and construct a polynomial proper
map $g$ such that $\Gamma_g=G$. Let $\mu_1< \mu_2< \cdots< \mu_n$ be $n$ distinct positive integers. Put
$$ \tau(z)=1 + \sum_{\sigma \in G} z_{\sigma(1)}^{\mu_1}\cdots z_{\sigma(n)}^{\mu_n} = 1 + t(z).$$
Here $t(z)$ is $G$-invariant and includes the term $z_1^{\mu_1} \cdots z_n^{\mu_n}$. By Lemma 6.1, 
there exists $\epsilon >0$ and a holomorphic polynomial map $q$ such that $\epsilon \tau \oplus q$ is unitarily equivalent to an orthogonal sum of tensor products. As before, for any $\gamma \in {\bf U}(n)$, we have 
$||\tau \circ \gamma||^2=||\tau||^2$ if and only if $||q \circ \gamma||^2=||q||^2$. 
Put $g_1 = \epsilon \tau \oplus q  \otimes z^{\otimes k_3}$, where $k_3 > \sum_{i=1}^n \mu_i$.
For each $\gamma \in G \leqslant S_n$ we have $||\tau \circ \gamma||^2=||\tau||^2$. Therefore
$G  \subseteq \Gamma_{g_1}$. Finally let $f$ be as in Proposition 6.1 with $\Gamma_f=S_n$.
For $k_4$ larger than the degree of $f$, put  $ g=J_{\frac{\pi}{4}}(f, g_1 \otimes z^{\otimes k_4})$.

We claim that $\Gamma_g=G$. By Theorem 6.1, $\Gamma_g \leqslant{\bf U}(n)$.  By Proposition 4.1, 
$$\Gamma_g=\Gamma_f \cap \Gamma_{g_1}.$$
We conclude that $G \subseteq \Gamma_g \subseteq S_n.$ We now prove that $\Gamma_g \subseteq G.$ 
Let $\gamma \in \Gamma_g \subseteq S_n.$ Again since $\gamma$ preserves the degree of polynomials, 
we have $||\tau \circ \gamma||^2=||\tau||^2$. But
$$ ||\tau \circ \gamma||^2=|t \circ \gamma|^2+ t \circ \gamma+ \overline{t \circ \gamma} +1$$
$$ ||\tau||^2=|t|^2+t+\overline{t}+1.$$
Hence $t \circ \gamma=t$. Since $\gamma \in S_n$, it maps each term $z_1^{\nu_1}\cdots z_n^{\nu_n}$ to some other term in $t$. Thus $\gamma \in G$ and $\Gamma_g \subseteq G$. 
We have established the claim and finished the proof of Theorem 6.3.
\end{proof}

Theorem 6.2 is decisive, but it relies on the theorem of E. Noether
that the algebra of polynomials invariant under the group is finitely generated. 
See [Sm] for considerable discussion about this result and how to bound the number of generators
and their degrees. Our proof of Theorem 6.2 increases in complexity 
when the degrees of the generating polynomials increase. Furthermore, to make the map explicit we need to know
all the generators. 

The advantage of Theorem 6.3 is that we first reduce to the case of the symmetric group, using
Proposition 6.1. In Theorem 6.3 we start with a quadratic polynomial map $p$. From it we construct a polynomial
map $g$ for which $\Gamma_g=\left( {\bf U}(1) \oplus ... \oplus {\bf U}(1) \right)  \times S_n$.
Then we can use a single additional invariant polynomial, namely $\sum z_j$, to eliminate
the factor of $\left( {\bf U}(1) \oplus ... \oplus {\bf U}(1) \right)$. By working
in the symmetric group, we cut down to the subgroup $G$ using the map $\tau$
in the proof. The polynomial $g$ found in the proof is explicit and is of degree $3$. 
The map found using the second
proof of Proposition 6.1 is perhaps more natural, but it is of higher degree. We ask the following
questions. Given $\Gamma_f$, what is the smallest source dimension for which there
is a rational map $g$ with $\Gamma_g$ isomorphic to $\Gamma_f$? Given a finite group $G$,
what is the smallest degree of a rational map $f$ with $\Gamma_f = G$?

\begin{remark} By a result from [L], a quadratic proper rational map $f$ is spherically equivalent to
a monomial map, and hence its Hermitian invariant group contains an $n$-torus.
In particular, the group cannot be finite. As a consequence, the degree
of a map whose group is finite must be at least $3$. \end{remark}

\section{additional examples}

First we completely analyze the situation for the four equivalence classes of rational proper maps from
${\mathbb B}^2$ to ${\mathbb B}^3$. 
By a result of Faran ([Fa1]), each such map
is spherically equivalent to precisely one of the following four maps:
$$ (z_1,z_2) \mapsto (z_1,z_2, 0) \eqno (34.1) $$
$$ (z_1,z_2) \mapsto (z_1,z_1z_2, z_2^2) \eqno (34.2) $$
$$ (z_1,z_2) \mapsto (z_1^2 ,\sqrt{2}z_1 z_2, z_2^2) \eqno (34.3) $$
$$ (z_1,z_2) \mapsto (z_1^3, \sqrt{3} z_1 z_2, z_2^3). \eqno (34.4) $$
We write $G_f$ and $\Gamma_f$ for the groups defined in Definition 1.3. Recall that $G_f$ consists
of those automorphisms $\phi$ with $f \circ \phi=f$. Since all these maps are monomials, we may restrict
to unitary $\phi$ in computing $G_f$. Recall that $\Gamma_f$ is the Hermitian invariant group.

\begin{proposition} Let $G_f$ and $\Gamma_f$ denote the invariant groups from Definition 1.3.
\begin{itemize}
\item For the map in (34.1), $G_f$ is trivial and $\Gamma_f = {\rm Aut}({\mathbb B}^2)$. 
\item For the map in (34.2), $G_f$ is trivial and $\Gamma_f = {\bf U}(1) \oplus {\bf U}(1)$.
\item For the map in (34.3), $G_f$ is cyclic of order two,  and $\Gamma_f = {\bf U}(2)$.
\item For the map in (34.4), $G_f$ is cyclic of order three,  generated by the matrix in (35), where $\eta$
is a primitive cube-root of unity:
$$ \begin{pmatrix} \eta & 0 \cr 0 & \eta^2 \end{pmatrix}. \eqno (35)  $$ 
\item For the map in (34.4), $\Gamma_f$ is the subgroup
of ${\bf U}(2)$ generated by the matrices in (36.1) and (36.2):
$$ \begin{pmatrix} e^{i\theta} & 0 \cr 0 & e^{i\phi} \end{pmatrix} \eqno (36.1) $$
$$ \begin{pmatrix} 0 & 1  \cr 1 & 0 \end{pmatrix}. \eqno (36.2)  $$
\end{itemize}
\end{proposition}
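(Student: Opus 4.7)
The plan is to dispose of the four Faran classes in turn, with only the last requiring genuine work; the first three fall out immediately from results already in the paper. For (34.1), since $z\mapsto(z,0)$ is a totally geodesic embedding, any $\varphi\in\mathrm{Aut}(\mathbb{B}^2)$ extends trivially on the third coordinate to an element $\psi_\varphi\in\mathrm{Aut}(\mathbb{B}^3)$ with $f\circ\varphi=\psi_\varphi\circ f$, so $\Gamma_f=\mathrm{Aut}(\mathbb{B}^2)$; and $f\circ\varphi=f$ forces $\varphi=\mathrm{id}$ because the first two components of $f$ separate points. For the Whitney map (34.2), Corollary 6.1 specialized to $n=2$ gives $\Gamma_f=\mathbf{U}(1)\oplus\mathbf{U}(1)$ directly, and $G_f$ is trivial since $w_1=z_1$ and $w_2^2=z_2^2$ combined with $w_1w_2=z_1z_2$ force $w=z$. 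For the tensor square (34.3), Theorem~4.2 gives $\Gamma_f=\mathbf{U}(2)$ at once; the condition $\phi(z)^{\otimes 2}=z^{\otimes 2}$ forces $\phi=\pm I$, yielding the cyclic group of order two.

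The main case is (34.4). For $G_f$: since $f^{-1}(0)=\{0\}$, any $\varphi\in G_f$ satisfies $\varphi(0)=0$ and is therefore unitary. The polynomial identity $(\varphi_1(z))^3=z_1^3$ in $\mathbb{C}[z_1,z_2]$ forces $\varphi_1(z)=\eta z_1$ for a cube root of unity $\eta$, and likewise $\varphi_2(z)=\eta'z_2$; the middle component then forces $\eta\eta'=1$, which yields exactly the cyclic group generated by $\mathrm{diag}(\eta,\eta^2)$ in (35). For $\Gamma_f$: Corollary~3.3 (or Theorem~6.1 applied to the monomial pattern, which lacks $z_j$ and $z_j^2$) gives $\Gamma_f\subseteq\mathbf{U}(2)$. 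The key computation is then the identity
$$\|f(z)\|^2 \;=\; |z_1|^6 + 3|z_1z_2|^2 + |z_2|^6.$$
Writing $u=(\phi z)_1$, $v=(\phi z)_2$ for $\phi\in\mathbf{U}(2)$, unitarity yields $|u|^2+|v|^2=|z_1|^2+|z_2|^2$, and the elementary identity $a^3+b^3=(a+b)^3-3(a+b)ab$ applied with $a=|u|^2$, $b=|v|^2$ reduces $\|f\circ\phi\|^2=\|f\|^2$ to the cleaner polynomial equation
$$|uv|^2 \;=\; |z_1z_2|^2$$
in $\mathbb{C}[z,\bar z]$ (dividing out the non-identically-vanishing factor $1-|z|^2$).

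Expanding $uv=(u_{11}z_1+u_{12}z_2)(u_{21}z_1+u_{22}z_2)$ and matching coefficients of $|z_1|^4$, $|z_2|^4$, and $|z_1z_2|^2$ gives $u_{11}u_{21}=0$, $u_{12}u_{22}=0$, and $|u_{11}u_{22}+u_{12}u_{21}|=1$; combining these with the orthonormality of the columns of $\phi$ forces $\phi$ to be either diagonal or anti-diagonal. These together are precisely the subgroup of $\mathbf{U}(2)$ generated by (36.1) and (36.2), and conversely both families visibly preserve $\|f\|^2$ because the expression is symmetric in $(|z_1|^2,|z_2|^2)$. The main obstacle is recognizing the cubic identity that collapses the sixth-power equality to the quadratic equation $|uv|^2=|z_1z_2|^2$; once that simplification is in hand, the remainder of case (34.4) is routine linear algebra.
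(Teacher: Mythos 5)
Your proposal is correct and follows essentially the same route as the paper: the first three cases are handled by citing the same prior results (Corollary 3.1/totally geodesic for (34.1), Corollary 6.1 for (34.2), Theorem 4.2 for (34.3)), and for (34.4) your cubic-identity reduction $|z_1|^6+3|z_1z_2|^2+|z_2|^6=\|z\|^6-3\|z\|^2|z_1z_2|^2+3|z_1z_2|^2$ is exactly the paper's rewriting $\mathcal{H}(f)=(\rho+1)^3-1-3\rho|z_1|^2|z_2|^2$ in $\rho=\|z\|^2-1$, after which both arguments cancel the $\mathbf{U}(2)$-invariant piece, divide by $\rho$, and match coefficients of $|z_j|^4$ to force $u_{11}u_{21}=u_{12}u_{22}=0$.
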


\begin{proof} For the map in (34.1), both statements are obvious.

For the map in (34.2), the first statement is easy. The second follows
from Theorem 6.1 (or Corollary 6.1), because the monomials $z_1^2$ and $z_2$ do not occur
in the formula for the map.  

The map in (34.3) is simply $z^{\otimes 2}$. The first statement follows because the map itself
is invariant under only the linear maps $z \mapsto \pm z$. The second statement follows
from Theorem 4.2.

The map $f$ defined by (34.4) is invariant under the transformation in (35), its square, and the identity (its cube), but under no other non-trivial linear map. Hence the first statement holds.
The second statement is a bit subtle. As usual, let $\rho = |z_1|^2 + |z_2|^2 -1$. We compute the Hermitian
form ${\mathcal H}(f)$ and obtain:
$$ (|z_1|^2 + |z_2|^2)^3 + 3 |z_1|^2 |z_2|^2 (1 - |z_1|^2 - |z_2|^2) = (\rho+1)^3 -1 -3 \rho |z_1|^2 |z_2|^2.  \eqno (37)$$
The terms in (37) involving $\rho$ are of course invariant under ${\bf U}(2)$. 
The term $|z_1|^2 |z_2|^2$ is invariant under both the transformations in (36.1) and (36.2).
We must show that they are invariant under no other unitary maps $L$. Let
$$ L(z_1,z_2) = (u_{11} z_1 + u_{12} z_2, u_{21} z_1 + u_{22} z_2) = (w_1,w_2). $$ 
Setting $|w_1|^2 |w_2|^2 = |z_1|^2 |z_2|^2$ forces $u_{11}u_{21} = u_{12}u_{22} = 0$ and
$|u_{11}u_{22} + u_{12}u_{21}|^2 = 1$. Since $L$ is unitary, the only way $L$ can have off-diagonal terms
is if $u_{11}=u_{22}=0$ and $|u_{12}u_{21}|^2=1$. The second statement follows.
\end{proof}

The map in (34.3) is a special case of the following crucial example.

\begin{example} For $m\ge 2$, put $f(z) = z^{\otimes m}$. Let $\eta$ be a primitive $m$-th root of unity.
Then $f$ is invariant under the map $z \mapsto \eta z$. In fact, $G_f$ is
the cyclic subgroup of ${\bf U}(n)$ generated by this map.
We have seen that $\Gamma_f = {\bf U}(n)$ and ${\bf s}(f) = 1$.
\end{example}

Next we give a polynomial example, for $n=2$,
where $\Gamma_f$ is the trivial group. By Theorem 5.1
this map cannot be spherically equivalent to a monomial map.

\begin{example}  Start with
$(z,w) \mapsto (z,w^2,zw)$ and apply a unitary map to get
$$ (c(z+w^2), c(z-w^2), zw) $$
where $c = {1\over \sqrt{2}}$. Then do the same trick on the last two components to get
$$ \left(c(z+w^2), c(c(z-w^2) + zw), c(c(z-w^2) - zw)\right). $$
Finally tensor on the last slot to get the following map $f(z,w):$
$$ \left(c(z+w^2), c(c(z-w^2) + zw), c(c(z-w^2) - zw)z, c(c(z-w^2) - zw)w\right). \eqno (38) $$
Since $f(0)=0$, the group $\Gamma_f$ is a subset of ${\bf U}(2)$. We compute the squared norm $||f(z,w)||^2$.
The only quadratic term is $|z|^2$; hence $\Gamma_f$ is a subgroup
of ${\bf U}(1) \oplus {\bf U}(1)$. 
There are mixed terms of the form $z{\overline w}^2$ and $z{\overline {z w}}$. The invariance of these terms
under a diagonal unitary matrix $U$ forces $U$ to be the identity.  Thus $\Gamma_f$ is trivial.
\end{example}

We briefly return to the other aspect of group-invariant maps from balls.
Let $G$ be a {\bf finite} subgroup
of ${\bf U}(n)$. Then there is a canonical 
nonconstant $G$-invariant polynomial mapping $p:{\mathbb C}^n \to {\mathbb C}^N$ such that
the image of the unit sphere under $p$ lies in a hyperquadric. 
Unless the group is cyclic and represented in one of three particular ways,
the target cannot be a sphere. For a given subgroup $G$,
the target hyperquadric requires sufficiently many eigenvalues of both signs.
Hence there is an interplay between the values of $N,l$ and representation theory.
See [D1], [D5] and their references for an introduction to this topic.
See [F2] for the first paper finding restrictions on the groups arising for
rational proper maps between balls, see [Li]
for the more general result that such groups must be cyclic, and see [G] for results
when the target is a generalized ball. One delightful result in [G] is that the binary icosahedral
group arises as the invariant group of a polynomial map sending the unit sphere $S^3$ to a hyperquadric
defined by a form with $40$ positive and $22$ negative eigenvalues.

Consider the cyclic group generated by the matrix (35), where now $\eta$ is an arbitrary
odd root of unity; suppose $\eta^{2r+1} = 1$.
This group arises as the invariant group of a monomial proper map from ${\mathbb B}^2$
to ${\mathbb B}^N$, where $N=r+2$.  These maps satisfy the sharp degree estimate discussed in Section 2
and they have many other remarkable properties. See the references in [D5]. When $\eta$
is an even root of unity, we obtain an invariant map to ${\mathbb B}^N_1$. 

We close this section with two more examples. 

\begin{example} Using terminology from [DL] we consider a Whitney sequence of proper
polynomial maps $W_k$ from ${\mathbb B}^2$. Each of these maps is essential. Put
$$ W_1(z_1,z_2) = (z_1, z_1z_2, z_2^2). $$
Given $W_k$, define $W_{k+1}$ by tensoring on the last slot of $W_k$. Thus $W_{k+1} = E_{A_k} W_k$: 
$$ W_2(z_1,z_2) = (z_1, z_1z_2, z_1 z_2^2, z_2^3).$$
$$ W_3(z_1,z_2) = (z_1, z_1z_2, z_1 z_2^2, z_1 z_2^3, z_2^4). $$
For each $k$, $\Gamma_{W_k} = {\bf U}(1) \oplus {\bf U}(1)$. Each $W_k$ has source rank $2$ and image rank $k+2$.  Thus, as in the gap 
conjecture of [HJY], there are no gaps in the possible embedding dimensions when 
the source dimension is $2$. When $n\ge 3$, however, the collection of gaps for 
essential maps is strictly larger than for rational maps with minimum embedding 
dimension.
\end{example}

\begin{example} We briefly consider source dimension $3$. The smallest $N$ for which
there is an essential rational proper map to ${\mathbb B}^N$ is $7$.
Put $c= \cos(\theta)$ and $s=\sin(\theta)$. Consider the one-parameter 
family of proper maps $f:{\mathbb B}^3 \to {\mathbb B}^6$ given by   
$$ f(z_1,z_2,z_3) = (z_1,z_2, c z_3, sz_1z_3,s z_2z_3, sz_3^2).$$ 
For $0 < \theta \le {\pi \over 2}$, the Hermitian invariant group satisfies
$\Gamma_f = {\bf U}(2) \oplus {\bf U}(1)$, and the source rank is $2$.
For $\theta =0 $, the group is ${\rm Aut}({\mathbb B}^3)$, and the source rank is $1$. Thus
each of these maps is inessential. By contrast, consider the family of proper maps $g:{\mathbb B}^3 \to {\mathbb B}^7$ defined, for $0 < \theta < {\pi \over 2}$,  by
$$ g(z_1,z_2,z_3) = (cz_1, z_2, s z_1^2, s z_1 z_2, \sqrt{1+s^2} z_1z_3, z_2 z_3, z_3^2). $$ 
The Hermitian invariant group $\Gamma_g$ is ${\bf U}(1 ) \oplus {\bf U}(1) \oplus {\bf U}(1)$. The source
rank is $3$, the image rank is $7$, and the map is essential. Hence a version of the gap conjecture
for essential maps, as mentioned in Example 7.3, must differ from 
the gap conjecture in [HJY]. \end{example}

\section{Additional information about proper maps between balls}

For most CR manifolds $M$ and $M'$, the only CR maps between them are constant, 
and hence not complicated. It is therefore natural to consider situations in
which non-constant maps exist, and then to find restrictions on the maps based on information about the domain and target manifolds. A classical example is the result of Pinchuk that a proper holomorphic
self-map of a strongly pseudoconvex domain in two or more dimensions
must be an automorphism. In particular, for $n\ge 2$, a rational map sending the unit sphere $S^{2n-1}$
to itself must be a linear fractional transformation. By contrast, a rational map sending $S^{2n-1}$
to $S^{2N-1}$ can have arbitrarily large degree if $N$ is sufficiently large.
The so-called degree estimate conjecture stated below suggests a sharp bound on the degree of
such a map. 

The study of proper maps between balls led to CR complexity theory.
For $3\le n \le N \le 2n-2$, Faran [Fa2] showed that a 
(rational) proper map $f: {\mathbb B}^n \to {\mathbb B}^N$ 
is spherically equivalent to the map $z \mapsto (z,0) = z \oplus 0$.
See [Hu] for the same conclusion under weaker boundary regularity assumptions.
In [Fa1] Faran found the four spherical equivalence 
classes of maps from ${\mathbb B}^2$ to ${\mathbb B}^3$. We 
computed both types of invariant groups for
these maps in Proposition 7.1. See also [HJ] for additional information when $N=2n-1$.

Assume $n\ge 2$. If $N<n$, then each holomorphic map from a sphere
to a sphere is constant.
If $N=n\ge 2$, then each proper holomorphic map between balls is an automorphism, and hence of degree $1$. 
If $N\ge 2n$, then there are uncountably many spherical equivalence classes of proper maps between balls.
See [D2], and for a stronger result, see [DL].

For each $n,N$ (with $n\ge 2$) there is a smallest number $c(n,N)$ such that the degree $d$ of 
every rational proper map $f: {\mathbb B}^n \to {\mathbb B}^{N}$ 
is at most $c(n,N)$. The sharp value of $c(n,N)$ is not known, but see [DL2] for the inequality
$$   d \le {N(N-1) \over 2(2n-3)}. $$
For $n=1$, there is no bound, as the maps $z \to z^d$
illustrate. The smallest value of $c(2,N)$ is unknown;
when $n=2$ there are examples of degree $2N-3$, and this value is known to be sharp for monomial maps.
For $n\ge 3$, the smallest value of $c(n,N)$ is also unknown; in this case 
there are examples of degree
$$ d = {N-1 \over n-1}. $$
By [LP] this bound is sharp for monomial maps. The {\bf degree estimate conjecture} states
that these inequalities for $c(n,N)$, sharp for monomials, hold for all rational maps.
The cases $n=2$ and $n=3$ are the most interesting, as phenomena from lower and higher dimensions clash.
A family of group-invariant {\it sharp polynomials} exists in source dimension
$2$.  See Section 7 for more information and see [D5] for additional references.

\begin{remark} The important recent paper [HJY] discusses progress on the {\it gap conjecture}
for proper rational maps between balls. The conjecture states precise values
for the possible embedding dimensions, given the source dimension. 
Our paper allows one to formulate a gap conjecture
for {\it essential} maps. Section 7 includes additional
discussion and two related examples.\end{remark}

By allowing the target dimension to be sufficiently large, proper maps
can be almost arbitrarily complicated. Consider the following result. (See [D4]). 

\begin{theorem} Let ${p \over q}: {\mathbb C}^n \to {\mathbb C}^N$ be a rational function such that
$||{p \over q}||^2 < 1$ on the closed unit ball. Then there is an integer $k$ and a polynomial mapping
$g:{\mathbb C}^n \to {\mathbb C}^k$ such that
${p \oplus g \over q}$ maps the sphere to the sphere.\end{theorem}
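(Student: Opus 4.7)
The plan is to reduce the problem to a Positivstellensatz for the unit sphere, derived from the Catlin--D'Angelo stabilization theorem. Define
$$ R(z,\overline{z}) := |q(z)|^2 - ||p(z)||^2. $$
The hypothesis $||p/q||^2 < 1$ on $\overline{\mathbb{B}^n}$ forces $R > 0$ on the closed ball, and in particular guarantees that $q$ does not vanish there, so $(p \oplus g)/q$ will be well defined. The rational map $(p \oplus g)/q$ sends $S^{2n-1}$ into a sphere if and only if $||g(z)||^2 = R(z,\overline{z})$ on $\{||z||^2 = 1\}$, so the task reduces to writing $R$, modulo the principal ideal generated by $||z||^2 - 1$, as the squared norm of a holomorphic polynomial map.

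The Catlin--D'Angelo stabilization theorem produces Hermitian sum-of-squares representations for bihomogeneous polynomials that are strictly positive off the origin. To apply it, I would first modify $R$ without changing its values on the sphere. For any $c > 0$ and $m \in \mathbb{N}$, put
$$ \widetilde R(z,\overline{z}) := R(z,\overline{z}) + c\,(||z||^2 - 1)\,||z||^{2m}. $$
This agrees with $R$ on $S^{2n-1}$. For $m$ larger than $\deg R$ and $c$ chosen appropriately, $\widetilde R > 0$ throughout $\mathbb{C}^n$: outside the ball the added term is positive and, by the degree condition, eventually dominates $R$, while inside the ball the function $||z||^{2m}(1-||z||^2)$ has sup-norm of order $O(1/m)$, so a small value of $c/m$ preserves the strict lower bound $R \ge \epsilon > 0$ that comes from the hypothesis.

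Next I would bihomogenize $\widetilde R$ to bidegree $(m+1, m+1)$ in $(z,w) \in \mathbb{C}^{n+1}$, obtaining $\widehat R(z,w,\overline{z},\overline{w})$ with $\widehat R(z,1,\overline{z},1) = \widetilde R$. For $w \neq 0$, bihomogeneity yields
$$ \widehat R(z,w,\overline{z},\overline{w}) = |w|^{2(m+1)}\,\widetilde R(z/w,\overline{z/w}) > 0, $$
and at $w = 0$ the only surviving contribution is $c\,||z||^{2(m+1)}$, which is positive for $z \neq 0$. Hence $\widehat R > 0$ on $\mathbb{C}^{n+1} \setminus \{0\}$. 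The Catlin--D'Angelo theorem then provides an integer $d$ and a polynomial map $H : \mathbb{C}^{n+1} \to \mathbb{C}^k$ with
$$ (||z||^2 + |w|^2)^{d}\,\widehat R(z,w,\overline{z},\overline{w}) = ||H(z,w)||^2. $$
Setting $w = 1$ and restricting to $||z||^2 = 1$ gives $2^{d} R(z,\overline{z}) = ||H(z,1)||^2$, and so $g(z) := 2^{-d/2} H(z,1)$ is the polynomial map we need.

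The main obstacle is the construction of the modification $\widetilde R$ whose bihomogenization is strictly positive on all of $\mathbb{C}^{n+1} \setminus \{0\}$, since the hypothesis supplies positivity only on $\overline{\mathbb{B}^n}$ while Catlin--D'Angelo demands global positivity away from the origin. The explicit auxiliary polynomial above, together with the balance of $c$ and $m$ described, bridges this gap; once it is arranged, the remainder of the argument is a direct application of the stabilization theorem followed by specialization at $w = 1$.
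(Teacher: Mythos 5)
Your argument is correct, but note that the paper itself does not prove Theorem 8.1 --- it simply cites [D4] (D'Angelo, ``Hermitian analogues of Hilbert's 17-th problem''), where the proof proceeds along exactly the route you take: reduce to positivity of $R = |q|^2 - \|p\|^2$ on the sphere, promote it to global positivity by adding a multiple of $(\|z\|^2 - 1)\|z\|^{2m}$ (which vanishes on the sphere and, for $m$ large, simultaneously dominates $R$ outside the ball while costing only $O(c/m)$ inside), bihomogenize so that the top bidegree part is $c\|z\|^{2(m+1)}$, and apply the Catlin--D'Angelo stabilization theorem before specializing at $w=1$. So your proof is essentially the one in the cited reference.
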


A special case of Theorem 8.1 is used several times in the proofs in this paper.
Given a polynomial map $p$, we require a polynomial map $g$ such that $\epsilon p \oplus g$
is a proper map between balls. We can do so because 
$ 1 - \epsilon^2 ||p(z)||^2 > 0$ on the closed ball when $\epsilon^2$ is sufficiently small.

It is also useful to place Theorem 8.1 in the context of CR complexity. It is not possible to bound
either the degree of $g$ or the dimension $k$ in terms of $n$ and the degrees of $p$ and $q$ alone. 
In order to achieve rational proper maps of arbitrary complexity, one must allow the target dimension to
be arbitrarily high. We give one simple example to further illustrate the depth of this result.

\begin{example} Consider the family of polynomials given by $q_a(z) = 1- a z_1 z_2$.
For $|a| < 2$, the polynomial $q$ has no zeroes on the closed ball. If we seek
a proper map, one of whose components is ${cz_1 \over q_a(z)}$ (here $c \ne 0$ is a constant),
then the minimum possible target dimension for this map tends to infinity as $a$ tends to $2$. \end{example}

\section{bibliography}

\medskip

[BEH] M. S. Baouendi, P. Ebenfelt, and X. Huang, Holomorphic mappings between hyperquadrics with small signature difference, Amer. J. Math. 133 (2011), no. 6, 1633-1661.

\medskip

[BH] M. S. Baouendi and X. Huang, Super-rigidity for holomorphic mappings between hyperquadrics with positive signature, J. Diff. Geom. 69 (2005), no. 2, 379-398. 

\medskip

[CS] J. A. Cima and T. J. Suffridge, Boundary behavior of rational
proper maps, Duke Math. J. 60 (1990), no. 1, 135-138

\medskip

[CM] H. Cao and N. Mok, Holomorphic immersions between compact hyperbolic space forms, Invent. Math. 100 (1990) no. 1, 49-62.

\medskip

[D1] J. P. D'Angelo,  Several Complex Variables and the Geometry of
Real Hypersurfaces,
CRC Press, Boca Raton, Fla., 1993.

\medskip

[D2] J. P. D'Angelo,
Proper holomorphic maps between balls of different dimensions,
Michigan Math. J. 35 (1988), no. 1, 83-90.

\medskip

[D3] J. P. D'Angelo, On the classification of rational sphere maps, preprint.
\medskip

[D4] J. P.  D'Angelo, Hermitian analogues of Hilbert's 17-th problem. Adv. Math. 226 (2011), no. 5, 4607-4637. 

\medskip

[D5] J. P. D'Angelo, Invariant CR mappings. Complex analysis, 95-107, Trends Math., Birkh\"auser/Springer Basel AG, Basel, 2010.

\medskip 

[DL] J. P. D'Angelo and J. Lebl, Homotopy equivalence for proper holomorphic mappings, Advances in Math
286 (2016), 160-180.

\medskip

[DL2] J. P. D'Angelo and J. Lebl, 
On the complexity of proper holomorphic mappings between balls, 
Complex Var. Elliptic Equ. 54 (2009), no. 3-4, 187-204.

\medskip

[Fa1]  J. Faran, Maps from the two-ball to the 
three-ball, Invent. Math. 68 (1982), no. 3, 441-475.

\medskip

[Fa2] J. Faran, The linearity of proper holomorphic maps between balls in the low codimension case, J. Diff. Geom. 24 (1986), no. 1, 15-17.

\medskip

[F1] F. Forstneri\v c,
Extending proper holomorphic maps of positive codimension,
Inventiones Math., 95(1989), 31-62.

\medskip

[F2] F. Forstneri\v c, Proper holomorphic maps from balls,
Duke Math. J. 53 (1986), no. 2, 427-441. 

\medskip

[G] D. Grundmeier, Signature pairs for group-invariant Hermitian polynomials,
Internat. J. Math. 22 (2011), no. 3, 311-343. 

\medskip

[HT] K. Hoffman and C. Terp, Compact subgroups of Lie groups and locally compact groups,
Proc. A. M. S., Vol. 120, No. 2 (1994), 623-634.

\medskip

[Hu] X. Huang,  On a linearity problem for proper maps between balls in
complex spaces
of different dimensions, J. Diff. Geom. 51 (1999), no 1, 13-33.

\medskip

[HJ] X. Huang  and S. Ji,
Mapping ${\mathbb B}_n$ into ${\mathbb B}_{2n-1}$, Invent. Math. 145
(2001), 219-250. 

\medskip

[HJY] X. Huang, S. Ji, and W. Yin, On the third gap for proper holomorphic maps between balls,
Math. Ann. 358 (2014), no. 1-2, 115-142. 

\medskip

[JZ] S. Ji and Y. Zhang,
Classification of rational holomorphic maps from $B^2$ into $B^N$ with
degree $2$. Sci. China Ser. A  52 (2009), 2647-2667.

\medskip

[L] J. Lebl, Normal forms, Hermitian operators, and CR maps of spheres
and hyperquadrics, Michigan Math. J. 60 (2011), no. 3, 603-628.

\medskip

[LP] J. Lebl and H. Peters, Polynomials constant on a hyperplane and
CR maps of spheres, Illinois J. Math. 56 (2012), no. 1, 155-175.

\medskip

[Li] D. Lichtblau, Invariant proper holomorphic maps between balls, Indiana Univ. Math. J. 41 (1992), no. 1, 
213-231.

\medskip

[KM]  V. Koziarz and N. Mok, Nonexistence of holomorphic submersion between complex unit balls equivariant with respect to a lattice and their generalization, Amer. J. Math. 132 (2010)  no. 5, 1347-1363.

\medskip

[S] Y.-T. Siu, Some recent results in complex manifold theory related to vanishing theorems for the semipositive case, Arbeitstagung Bonn 1984, Lect. Notes Math. Vol. 1111, Springer-Verlag 1985, Berlin-Heidelberg-New York, pp. 169-192.

\medskip

[Sm] L. Smith, Polynomial invariants of finite groups: a survey of recent developments,
Bull. Amer. Math. Soc. 34 (1997), no. 3, 211-250.

\end{document}